\theoremstyle{plain}
\theoremstyle{definition}
  \newtheorem{axiom}{公理}[section] 
  \newtheorem{theorem}{定理}[section] 
  \newtheorem{lemma}{補題}[section]
  \newtheorem{proposition}{命題}[section]
  \newtheorem{corollary}{系}[section]
  \newtheorem{claim}{Claim}[section]
  \newtheorem{conjecture}{Conjecture}[section]
  \newtheorem{hypothesis}{仮説}[section]
  \newtheorem{assumption}{仮定}[section]
  \newtheorem{remark}{注意}[section]
  \newtheorem{example}{例}[section]
  \newtheorem{examples}{例}[section]
  \newtheorem{program}{プログラム}[section]
  \newtheorem{prompt}{プロンプト}[section]
  \newtheorem{breakpoint}{一時停止}[section]
  \newtheorem{problem}{問題}[section]
  \newtheorem{solution}{解}[section]
  \newtheorem{fact}{事実}[section]
  \newtheorem{principle}{原理}[section]
  \newtheorem{definition}{定義}[section]
  \newtheorem{definitions}{定義系}[section]
  \newtheorem{questioN}{問題}[section]
  \newtheorem{answer}{解答}[section]
  \newtheorem{exercise}{練習問題}[section]
  \newtheorem{notation}{表記}[section]
  \newtheorem{algorithm}{アルゴリズム}[section]
\newtheorem{theorem}{Theorem}[section] 
\newtheorem{lemma}{Lemma}[section]
\newtheorem{proposition}{Proposition}[section]
\newtheorem{corollary}{Corollary}[section]
\newtheorem{assumption}{Assumption}[section]
\newtheorem{remark}{Remark}[section]
\newtheorem{definition}{Definition}[section]
\newtheorem{notation}{Notation}[section]
\newtheorem{algorithm}{Algorithm}[section]
\newcommand{\underbarA}[1]{ \underline{#1} }  %
\newcommand{\sigmaalgebra}[0]{ \mathcal{F} }  
\newcommand{\sigmaalgebrat}[1]{ \mathcal{F}_{#1} }  
\newcommand{\filtration}[3]{ ({#1}_{#2})_{{#2}\in{#3}} } 
\newcommand{\stoppingTimeSet}[1]{ \mathcal{T}_{#1} } 
\newcommand{\TimeSet}[0]{ \mathbb{T} }  
\newcommand{\abs}[1]{ \left|#1\right| }  
\newcommand{\ProbabilityMeasureP}[0]{ \mathbb{P}     }
\newcommand{\expectationx}[2]{ \mathbb{E}_{#1} \left[#2\right]    }
\newcommand{\expectationCond}[2]{ \mathbb{E}_{} \left[#1\middle|#2\right]    }  
\newcommand{\ExpectationCond}[2]{ \mathbb{E}_{} \Biggl[#1\Biggm|#2\Biggr]    }
\newcommand{\process}[2]{ {#1}_{#2}  }  
\newcommand{\discount}[1]{ e^{#1}  }  
\newcommand{\valuefuncA}[3]{ {#1}(#2, #3)  } 
\newcommand{\valuefuncB}[4]{ {#1}(#2, #3, #4)  }  
\newcommand{\valuefuncC}[5]{ {#1}(#2, #3, #4, #5)  } 
\newcommand{\intA}[4]{ \int_{#1}^{#2} {#3} d {#4}  } 
\newcommand{\ousde}[6]{ d{#1}_{#2}=-{#3}({#1}_{#2}-{#4}) d {#2} + {#5} d{#6}_{#2}}  
\newcommand{\ousdeA}[4]{ d{#1}=-{#2}{#1}d{#3}+\sqrt{#2} d  {#4}  }  
\newcommand{\generator}[1]{ \mathcal{L}{#1}  }  %
\newcommand{\generatorx}[2]{ \mathcal{L}_{#1}{#2}  }  %
\newcommand{\DEOUhomo}[2]{ {#1}'' -2 {#2}{#1}' -\frac{2\delta}{\theta}{#1}  }  %
\newcommand{\DEOUnonhomo}[3]{ \DEOUhomo{#1}{#2} -{#3}\frac{2\sigma}{\sqrt\theta}{#2}  }  %
\newcommand{\Hermite}[2]{ H_{#1}(#2)  }  
\newcommand{\HermiteNu}[1]{ \Hermite{\nu}{#1}  }  
\newcommand{\HermiteNui}[1]{ \Hermite{\nu-1}{#1}  }
\newcommand{\HermiteNuIntegralRepresentation}[1]{ \frac{1}{2\Gamma(-\nu)}\intA{0}{\infty}{e^{-s-2{#1}^{\sqrt{s}}}s^{-\frac{1}{2}\nu-1}}{s}  }  %
\newcommand{\HermiteNuIntegralRepresentationA}[1]{ \frac{1}{\Gamma(-\nu)}\intA{0}{\infty}{e^{-t^2-2t{#1}}{t}^{-\nu-1}}{t}  }  %
\newcommand{\Wronskian}[1]{ \mathcal{W}\{{#1}\}  }  %
\newcommand{\interior}[1]{ \mathbf{Int}\left({#1}\right)  }  %
\newcommand{\indicator}[1]{ \mathbf{1}_{\{#1\}}  }  %
\newcommand{\real}[1]{ \mathbb{R}^{#1}  }  %
\newcommand{\continuationRegion}[1]{ {\mathcal{C}}_{#1}^{\xi}}   %
\newcommand{\continuationRegionA}[2]{ {\mathcal{C}}_{#1}^{#2}}   %
\newcommand{\continuationAfterSwitchingRegionA}[2]{ {\mathcal{O}}_{#1}^{#2}}   %
\newcommand{\stoppingRegion}[1]{ {\mathcal{S}}_{#1}^{\xi}}  %
\newcommand{\stoppingRegionA}[2]{ {\mathcal{S}}_{#1}^{#2}}  %
\newcommand{\stoppingRegionCandidate}[2]{{\mathcal{Q}}^{#1}_{#2}}  %
\newcommand{\continuousFunction}[1]{ {{C}}^{#1}}  
\newcommand{\positionspace}[0]{ \mathbb{I}  } 
\newcommand{\controlspace}[0]{ \mathcal{A}{}  }
\newcommand{\controlspaceAn}[1]{ \mathbb{A}_{#1}  }
\newcommand{\regionO}[0]{ {\mathcal{O}}}   
\newcommand{\superjets}[2]{ J_{#2}^{2,+}{#1} } 
\newcommand{\subjets}[2]{ J_{#2}^{2,-}{#1} }
\newcommand{\subsuperjets}[2]{ J_{#2}^{2,\mp}{#1} }
\newcommand{\wrt}[0]{{with respect to} }
\newcommand{\SDE}[0]{ stochastic differential equation }
\newcommand{\TestFuncMin}[3]{\{{#1}, {#2}\} \smile {#3}}
\newcommand{\TestFuncMax}[3]{\{{#1}, {#2}\} \frown {#3}}
\newcommand{\setViscositySolution}[1]{\bigg[ {#1} \bigg]}
\newcommand{\setViscositySuperSolution}[1]{\bigg\lceil {#1} \bigg\rceil}
\newcommand{\setViscositySubSolution}[1]{\bigg\lfloor {#1} \bigg\rfloor}
\newcommand{\subsubsubsection}{\@startsection{paragraph}{4}{\z@}%
  {1.0\Cvs \@plus.5\Cdp \@minus.2\Cdp}%
  {.1\Cvs \@plus.3\Cdp}%
  {\reset@font\sffamily\normalsize}
}
\newcommand{\citeN}[1]{\cite{#1}}   
\title{
A viscosity solution as a piecewise classical solution to a free boundary problem for the optimal switching problem with simultaneous multiple switches
  }
\author{
  Kiyoshi Suzuki \\
  Center for Data Science and AI Innovation \\
  Graduate School of Economics, Shiga University\\
  \texttt{kiyoshi-suzuki@biwako.shiga-u.ac.jp}\\
  ORCID: \href{https://orcid.org/0000-0003-1133-0649}{0000-0003-1133-0649}
}
\date{\today}
\begin{document}    

\maketitle  


\begin{abstract} %

\citeN{suzuki2020optimal} proves the uniqueness of the viscosity solution to a variational inequality which is solved by the value function of the infinite horizon optimal switching problem  with simultaneous multiple switchings.
Although it also identifies each connected region possibly including at most one connected switching region, the exact switching regions of the solution are not identified.  The problem is finally converted into a system of free boundary problems and generally solved by the numerical calculation.
However, if the PDE part of the variational inequality has a classical solution, the viscosity solution may be constructed as a series of piecewise classical solutions, possibly analytical.

Under a certain assumption we prove that the series of piecewise classical solutions is indeed the viscosity solution on $\real{}$, after we prove the smooth pasting condition is its necessary condition, and establish the algorithm to compute all the free boundaries.  Applying the results to the concrete problem studied in \citeN{suzuki2020optimal} we find the explicit solution and identify the continuation and switching regions in a computer with Python programs.
\end{abstract}

\section{Introduction}
The concept of viscosity solutions was first introduced by Crandall and Lions in the early 1980s as a generalized solution framework for nonlinear first- and second-order partial differential equations (PDEs), particularly Hamilton–Jacobi–Bellman (HJB) equations (See \citeN{CrandallIshiiLions:92}). Unlike classical solutions, which require differentiability, viscosity solutions allow for meaningful interpretation and analysis even when the solution lacks smoothness, making the approach highly robust for dealing with value functions in optimal control and differential game problems.

Since its inception, the viscosity solution theory has found extensive applications in mathematical finance, especially in the context of dynamic portfolio optimization, real options, and optimal stopping and switching problems. The framework is particularly well-suited for stochastic control problems involving discontinuities, regime changes, or singularities—situations where classical PDE methods often fail.

In financial applications, the HJB equations governing optimal investment and consumption strategies frequently lack smooth solutions due to market frictions, transaction costs, or stochastic volatility. Viscosity solutions provide a rigorous analytical and numerical foundation for such problems. The methodology has been widely adopted in both theoretical and applied finance literature, underpinning models of optimal asset allocation, market entry/exit decisions, option pricing under transaction costs, and investment under uncertainty.

Classical solutions describe necessary conditions for the value function only within the continuation region, where the solution is smooth. However, it is essential to verify—via a verification theorem—that a candidate function coincides with the true value function.

Moreover, classical solutions often encounter domain discontinuities at the boundary of the continuation region. In contrast, viscosity solutions allow for a simply connected domain covering the entire region of interest and can represent the solution as a globally continuous function across the domain. In the existing literature, classical verification theorems are used in studies such as \citeN{ZhangZhang:08}, \citeN{GuoZhang:05}, and \citeN{Zervos:03}.

To avoid the complexity inherent in the classical approach, the concept of viscosity solutions was introduced by Crandall and Lions. This approach has provided a powerful and general-purpose analytical framework for stochastic control problems, allowing for a mathematically rigorous interpretation of solutions to formal Bellman equations under only local boundedness conditions. Foundational works on viscosity solutions include \citeN{CrandallIshiiLions:92} and \citeN{Fleming:93}.

Applications of the viscosity solution framework to optimal liquidation and optimal switching problems can be found in \citeN{PemyZhang:06}, \citeN{PhamVathZhou:09}, \citeN{PhamVath:07}, and \citeN{Pham:09}.

In the present study, we also adopt the viscosity solution approach. When uniqueness of the solution can be established, there is no need to invoke a classical verification theorem. It is also of critical importance for numerical computation that the value function to be obtained is represented as the unique viscosity solution defined over the entire real space.

\citeN{suzuki2020optimal} proves the uniqueness of the viscosity solution to a Hamilton--Jacobi--Bellman (HJB) variational inequality which is solved by the value function of the infinite horizon optimal switching problem with simultaneous multiple switchings.
The value function on $\real{}$ for the particular position and the number of remaining switchings is generally composed of continuation regions and switching regions. Although the value function as a viscosity solution is of class $\continuousFunction{1}$ on all of $\real{}$, the problem is that the formula of the function is not the same on $\real{}$, i.e., the function on $\real{}$ is composed of piecewise analytical functions which have different formulas.
You should find the analytical solution on each continuation region for a particular PDE, but beyond the free boundaries of the continuation region, you should solve other DPEs for other viscosity solutions and connect them.
The piecewise structure of the viscosity solution to the variational inequality is the major difficulty of the solution process of the problem.  
This type of solution formula is different from what is called closed analytical solution, nor the numerical solution.

We provide a theorem stating that if a classical solution to the PDE part of the HJB-variational inequality coincides with the unique viscosity solution to the variational inequality on the closure of the continuation region, then the viscosity solution is continuously differentiable at the free boundaries, and also its converse theorem describing a sufficient condition of a series of piecewise classical solutions to be the viscosity solution.
Using these theorems, we provide an algorithm to efficiently  find for each value function all the connected piecewise regions each of which is governed by a particular form of analytical function and calculate them using Python on a computer.

The remainder of this paper is organized as follows. 
Section~\ref{sec:assumptions_problem} introduces the assumptions and definitions underlying the optimal switching problem, including the structure of the value function, variational inequality formulation, and the decomposition of the state space into continuation and switching regions.
Section~\ref{sec:Piecewise classical viscosity solution} presents the theoretical foundation for constructing the viscosity solution as a series of piecewise classical solutions. We establish the necessary and sufficient smooth pasting conditions and provide a general theorem ensuring that the constructed piecewise functions yield the unique viscosity solution over the entire domain.
Section~\ref{sec:example_model} applies the theoretical results to a concrete example of a mean-reverting pair-trading model, as studied in \citeN{suzuki2020optimal}. We formulate the specific switching problem, derive the analytical general solutions on each continuation region, and apply the developed algorithm to compute the connected regions and identify the free boundaries.
Section~\ref{sec:VF_algorithm_datastr} describes the implementation of the algorithm in Python and presents the resulting value function graphs up to $n = 5$. All solutions are analytically derived and computed using Python without relying on numerical solvers.
Finally, Section~\ref{sec:conclusion} concludes the paper.

\section{Assumptions for the Value Function}\label{sec:assumptions_problem}

\subsection{Properties of the Value Function} \label{sec:PropertyValueFunc}
\def\continuationRegionN{\continuationRegion{n}}
\def\stoppingRegionN{\stoppingRegion{n}}

\def\CC#1#2{C^{#1 #2}_n}
\def\vf{\valuefuncB{v}{z}{\xi}{n}}
\def\vfhat{\valuefuncA{\widehat{v}}{z}{\xi}}
\def\vfhatxi#1{\valuefuncA{\widehat{v}}{z}{#1}}
\def\vfhatxiz#1#2{\valuefuncA{\widehat{v}}{#2}{#1}}



Let $(\Omega, \mathcal{F}, \mathbb{F}, \ProbabilityMeasureP) \label{def:probability_Space}$ be a complete probability space with the filtration  $\mathbb{F}=\filtration{\sigmaalgebra}{s}{\TimeSet}$, generated by a one-dimensional diffusion process $\{Z_t\in\real{}: t\in\TimeSet\}$ with its infinitesimal generator $\generatorx{}{}$, where $\TimeSet\equiv[t_0, \infty)$ and  $\mathcal{F}_{\infty}=\mathcal{F}$. 
The set of switching states (regimes) $\positionspace$ is composed of different positions (regimes)  for which the investor can choose.
$\controlspace{(\xi)}\subset\positionspace$ denotes the feasible set of the states (positions) into which the current state $\xi\in\positionspace\; (\xi\notin\controlspace{(\xi)})$ can transfer. 

$\stoppingTimeSet{t_0}$  denotes the feasible set of stopping times $\{\tau_i\}$, $(\tau_0=t_0\leq \tau_i \leq \tau_{i+1} (i=1, 2, \cdots); \tau_i\to\infty \; (i\to+\infty) \quad a.s.)$ for the switching time of the strategy, 
valued in $\TimeSet$.
  The stopping time is based on the filtration $\mathbb{F}=\filtration{\sigmaalgebra}{s}{\TimeSet}$, i.e.,   $\{ \omega \in \Omega | t_0\leq {\tau_i}(\omega)\leq s \} \in \sigmaalgebrat{s}, \; s\in \TimeSet$. At each time $\tau_i$, the position is switched from $\xi_{\tau_{i-1}}$ to  $\xi_{\tau_{i}}\ne\xi_{\tau_{i-1}}$; i.e., 
$\xi_{\tau_i} \in \controlspace{(\xi_{\tau_{i-1}})} \; (i=1,2,\cdots)$ is a $\sigmaalgebrat{\tau_i}-$measurable random variable.
The switching process $\xi_{s} (s\in \TimeSet)$ is a piecewise-constant process, formulated as:
$\xi_s=\displaystyle\sum_{i=0}^{\infty} \xi_{\tau_i}\indicator{[\tau_i, \tau_{i+1})}(s), \; s\in \TimeSet.  \label{eq:switchingProcess}$
A particular strategy is specified in the form of $\{\xi_{\tau_i}, \tau_i \}, ( \tau_i \in \stoppingTimeSet{t_0})$. 
The feasible set of switching strategies (control space) is defined as follows for the number of remaining switches $n\in \{0,1,\cdots,\infty\}$ with the initial states $\xi\in\positionspace$:
$\controlspaceAn{n}(\xi)\equiv \{\{{\xi}_{\tau_i}, \tau_i\} | {\xi}_{\tau_i} \in \controlspace({\xi}_{{\tau}_{i-1}}) \subset \positionspace, 
    \tau_i\in\stoppingTimeSet{t_0}, \xi_{\tau_0}=\xi, i=1, \cdots, n  \}. \label{def:controlspaceA}$
  
The value function $\valuefuncB{v}{z}{\xi}{n}$ with the current state $Z_{t}^-=z\in\real{}, t\in\TimeSet$ under regime $\xi_{t}^-=\xi\in\positionspace$ with the number of options to switch $n\geq 0$ is defined as the supremum of the criterion function $J$ \wrt the strategy $\alpha$ over the feasible set of strategies   $\controlspaceAn{n}(\xi)$ as follows.
\begin{align} 
  \valuefuncB{v}{z}{\xi}{n} \equiv
  \begin{cases}
    \sup_{\alpha \in \controlspaceAn{n}(\xi)} \valuefuncC{J}{z}{\xi}{n}{\alpha},   & n\geq 1,\\
    \valuefuncC{J}{z}{\xi}{0}{\phi},   & n=0.
  \end{cases} \label{def:valueFunc_criterion}
\end{align}

\begin{assumption}[Linear growth value function:] \label{theo:finite}
  There exists a constant $D>0$, such that for all $z\in\real{}$ and integer $n\geq 0$, the following inequality holds:
\begin{align}
  &\vf \leq D(1+|z|). \label{cond:vflinear} 
\end{align}
\end{assumption}

\begin{assumption}[Boundedness from below of the value function: ]
  \label{theo:boundedVf}
  For all $z\in \real{}, \xi\in\positionspace$ and integer $n\geq 1$, 
  \begin{align}
      \exists D\in\real{}, \valuefuncB{v}{z}{\xi}{n}\geq D.
  \end{align}
\end{assumption}

\begin{omitlevel1}
\begin{assumption}[Bounded and convergence of value function: \citeN{suzuki2016optimal}, Corollary 3.1] \label{coro:convergenceVF}~\\
  任意の有限な$z\in\real{}$ ($\abs{z} < \infty$)に対して以下が成立する。
  \begin{enumerate}
    \item [(i).] 任意の整数$n\leq \infty$に対して、
    \begin{align}
      \abs{\vf} \leq  C_z \label{ineq:boundedConvergence} 
    \end{align}
    を満たす定数$C_z<\infty$を$z$に応じて決めることができる。
    \item [(ii).]   $\lim_{n\to\infty}  \valuefuncB{v}{z}{\xi}{n}$
    が有限値に収束する。
  \end{enumerate}
\end{assumption}
\end{omitlevel1}

\def\PDE#1#2{PDE\{#1, #2\}}
\def\SDE#1#2{SDE\{#1, #2\}}

\begin{omitlevel1}
\begin{definition}[Test function: \citeN{suzuki2020optimal}] \label{def:testFunction}~\\
For a region ${\mathcal{O}}\subset\real{}$, suppose there exists an open neighborhood $\tilde{\mathcal{O}}\supset\mathcal{O}$ of $\mathcal{O}$.
For any function $v: x\mapsto\real{}$ defined on $\tilde{\mathcal{O}}$, if there exists a smooth function $\varphi\in\continuousFunction{2}(\tilde{\mathcal{O}})$ minimizes (maximizes, respectively) $v(x)-\varphi(x)$ relative to $\tilde{\mathcal{O}}$ at  $\bar{x}\in\mathcal{O}$ and $v(\bar{x})=\varphi(\bar{x})$, i.e., 
\begin{align}
  \min_{x\in\tilde{\mathcal{O}}\supset\mathcal{O}}\{v(x)-\varphi(x)\}=v(\bar{x})-\varphi(\bar{x})=0 \quad 
  (\max_{x\in\tilde{\mathcal{O}}\supset\mathcal{O}}\{v(x)-\varphi(x)\}=v(\bar{x})-\varphi(\bar{x})=0),
\end{align} 
then we call $\varphi$ as a super-(sub-) test function for the dependent variable $v$ minimized (maximized) at $\bar{x}$ and denote $\TestFuncMin{\varphi\in\continuousFunction{2}(\tilde{\mathcal{O}})}{\bar{x}\in\mathcal{O}}{v}$  ($\TestFuncMax{\varphi\in\continuousFunction{2}(\tilde{\mathcal{O}})}{\bar{x}\in\mathcal{O}}{v}$) on $\mathcal{O}$.
Or $\TestFuncMin{\varphi}{\bar{x}}{v}$  ($\TestFuncMax{\varphi}{\bar{x}}{v}$) for short, respectively.

つまりこれは次の意味である。
\begin{align}
  (D\varphi(\bar{x}), D^2\varphi(\bar{x}))\in\subsuperjets{v(\bar{x})}{\tilde{\regionO}}, \bar{x}\in\regionO 
\end{align}

なお$\mathcal{O}$が開集合のとき、$\tilde{\mathcal{O}}=\mathcal{O}$にとれるため、以降主に$\mathcal{O}$が開集合のときを示す。

\end{definition}
\end{omitlevel1}

\begin{omitlevel2}
Considering the second-order PDE, $F(x, w(x), Dw(x), D^2 w(x))=0, x\in\mathcal{O}$ for a region $\mathcal{O}\subset \real{}$ for the continuous function 
$F: \mathcal{O}\times  \real{}\times \real{1}\times \real{1} \mapsto \real{}$, which satisfies the following degenerate elliptic condition:
\begin{align}
  \forall x\in\mathcal{O}, r\in\real{}, p\in\real{1}, M, \hat{M}\in\real{1},  M\leq \hat{M} \Rightarrow F(x, r, p, M)\geq F(x, r, p, \hat{M}). \label{def:Ffunction}
\end{align} 
Relating to $'F'$, if there will be no confusion in the context, we will say that differential formula $F(x, w(x), Dw(x), D^2 w(x))$ of the dependent variable $x$ is on $\mathcal{O}$, but the function $F(\cdot, \cdot, \cdot, \cdot)$ is on $\mathcal{O}\times  \real{}\times \real{1}\times \real{1}$.
\end{omitlevel2}

We introduce the following notation.
\begin{notation}[Set of viscosity (super, sub)solutions: \citeN{suzuki2020optimal}] \label{not:solutions}~\\
The set of viscosity solution (viscosity supersolution, viscosity subsolution) to the PDE, $F=0$ on $\mathcal{O}$, respectively is denoted by:
\begin{align}
  \setViscositySolution{F=0}\quad  
  ( \setViscositySuperSolution{F=0}, 
  \setViscositySubSolution{F=0} )
  \text{ on } \mathcal{O}, \text{ respectively.}
\end{align} 
For convenience for the non-differential equation $F=0$, the same notation denotes the classical solution (supersolution, subsolution), i.e., $\setViscositySolution{F=0} (\setViscositySuperSolution{F=0}, \setViscositySubSolution{F=0})$ means $F=0\; (F\geq 0, F\leq 0)$ as in the classical sense, respectively.
\end{notation}

\begin{omitlevel1}
Then we define (discontinuous) viscosity supersolution (viscosity subsolution) as follows, using the above mentioned notations \wrt the test function.
\begin{definition}[Viscosity supersolution (subsolution): \citeN{suzuki2020optimal}] \label{def:solutions}~\\
Let $F$ satisfy (\ref{def:Ffunction}). Locally bounded function $V: \mathcal{O}\mapsto \real{}$ is a viscosity supersolution (subsolution) of $F=0$ on $\mathcal{O}$, respectively, i.e., 
\begin{align}
  &   V\in\setViscositySuperSolution{F({x}, W({x}), DW({x}), D^2 W({x}))=0} \text{ on } \mathcal{O} \nonumber\\
\overset{\text{def}}{\Longleftrightarrow}\quad 
& \forall  {\{{\varphi\in \continuousFunction{2}(\mathcal{O})}, {\bar{x}\in\mathcal{O}}\} \smile  {V}}, F(\bar{x}, V_*(\bar{x}), D\varphi(\bar{x}), D^2 \varphi(\bar{x}))\geq  0,   \\
  &   V\in\setViscositySubSolution{F({x}, W({x}), DW({x}), D^2 W({x}))=0} \text{ on } \mathcal{O} \nonumber\\
\overset{\text{def}}{\Longleftrightarrow}\quad 
& \forall  {\{{\varphi\in \continuousFunction{2}(\mathcal{O})}, {\bar{x}\in\mathcal{O}}\}  \frown  {V}}, F(\bar{x}, V^*(\bar{x}), D\varphi(\bar{x}), D^2 \varphi(\bar{x}))\leq 0,  
\end{align} 
where $V^*(V_*)$ is the upper-(lower-)semicontinuous or u.s.c. (l.s.c.) envelope of the function $V$, respectively.
Moreover, 
\begin{align}
  \setViscositySolution{\cdot}=\setViscositySuperSolution{\cdot}\cap \setViscositySubSolution{\cdot} \text{ for any equation, } '\cdot'. \label{def:supersubsolution}
\end{align} 
\end{definition}
\end{omitlevel1}

\begin{lemma}[Viscosity solution and variational inequality: \citeN{suzuki2020optimal}, Lemma 2.9] \label{prop:VI_supersolution}~\\
Suppose $F=0$ is a second-order partial differential equation and $G=0$ is a non-differential equation  on $\mathcal{O}$, the following holds.
\begin{align}
  &  \setViscositySuperSolution{\min\{F, G\}=0}=\setViscositySuperSolution{F=0}\cap\setViscositySuperSolution{G=0}, \label{eq:VIsuper}\\
  \nonumber\\
  &  \setViscositySubSolution{\min\{F, G\}=0}\supset\setViscositySubSolution{F=0}, \label{eq:VIsubF}\\
  \nonumber\\
  &  \setViscositySubSolution{\min\{F, G\}=0}\supset\setViscositySubSolution{G=0}. \label{eq:VIsubG}
  \intertext{Therefore,}
  &  \setViscositySolution{\min\{F, G\}=0}\supset\setViscositySuperSolution{F=0}\cap\setViscositySolution{G=0}, \label{eq:VIsolution}\\
  \nonumber\\
  &  \setViscositySolution{\min\{F, G\}=0}\supset\setViscositySolution{F=0}\cap\setViscositySuperSolution{G=0}. \label{eq:VIsolution_}
\end{align} 
\end{lemma}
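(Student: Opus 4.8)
The plan is to reduce every inclusion to the elementary algebra of the minimum: $\min\{a,b\}\ge 0$ if and only if $a\ge 0$ and $b\ge 0$, while $\min\{a,b\}\le a$ and $\min\{a,b\}\le b$ hold unconditionally, applied separately at each test point. The structural feature I would lean on is that $G=0$ is a \emph{non-differential} equation, so at a test point $\bar x$ the value $G(\bar x, V_*(\bar x))$ (resp.\ $G(\bar x, V^*(\bar x))$) is independent of $D\varphi(\bar x)$ and $D^2\varphi(\bar x)$. The $G$-contribution to any viscosity inequality therefore collapses to a pointwise condition on the appropriate semicontinuous envelope of $V$, which by Notation~\ref{not:solutions} is exactly the classical (super/sub)solution notion for $G$.

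I would dispatch the subsolution inclusions (\ref{eq:VIsubF}) and (\ref{eq:VIsubG}) first, since they are the cheapest. Given $V\in\setViscositySubSolution{F=0}$ and any sub-test function $\TestFuncMax{\varphi}{\bar x}{V}$, the definition gives $F(\bar x,V^*(\bar x),D\varphi(\bar x),D^2\varphi(\bar x))\le 0$; since $\min\{F,G\}\le F$, the same $\varphi$ certifies $\min\{F,G\}\le 0$ at $\bar x$, proving (\ref{eq:VIsubF}). Replacing $\min\{F,G\}\le F$ by $\min\{F,G\}\le G$, and using that a classical subsolution of the non-differential $G=0$ satisfies $G(\bar x,V^*(\bar x))\le 0$ at every point, yields (\ref{eq:VIsubG}) by the identical argument.

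For the supersolution identity (\ref{eq:VIsuper}) I would argue both inclusions. If $V\in\setViscositySuperSolution{F=0}\cap\setViscositySuperSolution{G=0}$, then at any super-test function $\TestFuncMin{\varphi}{\bar x}{V}$ both $F(\bar x,V_*(\bar x),D\varphi(\bar x),D^2\varphi(\bar x))\ge 0$ and $G(\bar x,V_*(\bar x))\ge 0$, whence $\min\{F,G\}\ge 0$, giving the inclusion $\supseteq$. Conversely, for $V\in\setViscositySuperSolution{\min\{F,G\}=0}$ the inequality $\min\{F,G\}\ge 0$ splits at each such $\bar x$ into $F\ge 0$ \emph{and} $G(\bar x,V_*(\bar x))\ge 0$, the first clause (quantified over all super-test functions) being precisely $V\in\setViscositySuperSolution{F=0}$. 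The two ``Therefore'' statements then follow by bookkeeping with the defining identity $\setViscositySolution{\cdot}=\setViscositySuperSolution{\cdot}\cap\setViscositySubSolution{\cdot}$: a classical solution of $G=0$ is simultaneously a classical super- and subsolution, so combining the $\supseteq$ half of (\ref{eq:VIsuper}) with (\ref{eq:VIsubG}) delivers (\ref{eq:VIsolution}), and combining it with (\ref{eq:VIsubF}) delivers (\ref{eq:VIsolution_}).

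The one step demanding genuine care is the converse half of (\ref{eq:VIsuper}): upgrading $G(\bar x,V_*(\bar x))\ge 0$, which a priori holds only at those $\bar x$ that admit a $\continuousFunction{2}$ super-test function, to the classical inequality $G\ge 0$ on all of $\mathcal{O}$. I would close this gap via the standard density fact that the second-order subjet of a locally bounded lower-semicontinuous function is nonempty on a dense subset of $\mathcal{O}$, and then propagate the inequality from this dense set to the whole of $\mathcal{O}$ using continuity of $G$ (in the value-function setting $V$ is continuous, so $V_*=V$). Everything else is the routine, test-point-by-test-point algebra of $\min$ set out above.
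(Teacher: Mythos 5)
Your overall strategy---pointwise algebra of the minimum at each test point, plus the observation that the non-differential $G$ enters only through the semicontinuous envelope of $V$---is the standard route, and since this paper states the lemma as a citation (Lemma 2.9 of \citeN{suzuki2020optimal}) with no proof of its own, that standard argument is the right benchmark. Your treatment of (\ref{eq:VIsubF}), (\ref{eq:VIsubG}), of the inclusion $\supseteq$ in (\ref{eq:VIsuper}), and of the two ``Therefore'' statements is correct, modulo the routine remark that for the obstacle-type $G$ the classical inequalities for $V$ pass to $V_*$ (resp.\ $V^*$) because the obstacle is continuous.

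The genuine gap is in the converse half of (\ref{eq:VIsuper}). You rightly flagged it as the delicate step, but your fix does not close it in the lemma's stated generality: the solution sets range over locally bounded functions, not continuous ones, so you may not invoke $V_*=V$; and for a merely lower-semicontinuous function, nonnegativity of $g(x)=G(x,V_*(x))$ on a dense set does \emph{not} propagate to all of $\mathcal{O}$, since $g$ is only l.s.c.\ and an l.s.c.\ function can be nonnegative on a dense set yet negative at a point (take $g=0$ off a single point and $g=-1$ there). Density of subjet points is therefore the wrong tool on its own. The standard repair is the perturbation argument: fix $x_0\in\mathcal{O}$ and minimize $y\mapsto V_*(y)+\varepsilon^{-1}|y-x_0|^2$ over a small closed ball around $x_0$; the minimizer $x_\varepsilon$ is an interior point touched from below by a $\continuousFunction{2}$ function, hence a test point, with $x_\varepsilon\to x_0$ and---this is precisely what density does not give you---$V_*(x_\varepsilon)\to V_*(x_0)$, because minimality forces $V_*(x_\varepsilon)\le V_*(x_0)$ while lower semicontinuity forces $\liminf_{\varepsilon\to 0} V_*(x_\varepsilon)\ge V_*(x_0)$. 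Applying the supersolution property of $\min\{F,G\}=0$ at each $x_\varepsilon$ gives $G(x_\varepsilon,V_*(x_\varepsilon))\ge 0$, and continuity of $G$ then yields $G(x_0,V_*(x_0))\ge 0$ in the limit. With that substitution for your density step, the proof is complete.
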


We assume the function $v(z, \xi, n)$ is a viscosity solution to the following variational inequality, which is the obstacle problem with the obstacle function $\valuefuncB{v}{z}{\hat{\xi}}{n-1}-K, \hat{\xi}\in\controlspace(\xi)$.
The following recursive formula corresponds to the iterative switching problem.
\begin{assumption}[Viscosity solution of HJB-variational inequality: \citeN{suzuki2018optimal}] \label{theo:viscosityvn}~\\
  For $n\geq 1$, $\xi\in\positionspace$,
  \begin{align}
    \valuefuncB{v}{z}{\xi}{n}&\in\setViscositySolution{ \min \bigl\{ \delta V -\generator{V}-f(z, \xi), V  -\max_{\hat{\xi}\in\controlspace(\xi)}\{\valuefuncB{v}{z}{\hat{\xi}}{n-1}-K\} \bigr\}=0} \text{ on } \real{},  \label{eq:variationalInequality}\\
    \valuefuncB{v}{z}{\xi}{0}&\in\setViscositySolution{\delta V -\generator{V}-f(z, \xi)=0} \text{ on } \real{},  \label{eq:variationalInequality0}
  \end{align}
  where $\delta>0$ is a discount factor, $K$ is the transaction cost accompanied by each switch and the function $f:\real{}\times\positionspace \mapsto\real{}$ is a running reward function comprising the criterion function $J$ in (\ref{def:valueFunc_criterion}) to be optimized.
\end{assumption}

\begin{assumption}[Comparison principle: ] \label{theo:comparison}
  Let $U_{\xi}$ (respectively $V_{\xi}$), $\xi\in\positionspace$, be a family of u.s.c viscosity subsolutions (respectively l.s.c. viscosity supersolutions) to  equation (\ref{eq:variationalInequality}) with a linear growth condition. Then, $U_{\xi} \leq  V_{\xi}$ holds.
\end{assumption} 

We can formally define the optimal continuation region and the optimal switching region  as (\ref{def:stoppingContinuousRegion}), where for each regime $\xi\in\positionspace$ and the number of remaining switches $n\geq 1$, the whole region $\real{}$ for the state variable $Z\in\real{}$ is divided into two parts, the continuation region $\continuationRegion{n}$ and the switching region $\stoppingRegion{n}$.
For $ \xi\in \positionspace, n\geq 1$ and given $\valuefuncB{v}{z}{\hat{\xi}}{n-1}-K$ and $\hat{\xi}\in\controlspace(\xi)$,
\begin{align}
  &\begin{cases}
     \begin{split}
       \continuationRegion{n} \equiv \{z\in\real{}| \valuefuncB{v}{z}{\xi}{n} &>  \displaystyle\max_{\hat{\xi}\in\controlspace(\xi)}\{\valuefuncB{v}{z}{\hat{\xi}}{n-1}-K\}\},
     \end{split}\\
     \begin{split}
       \stoppingRegion{n} \equiv \{z\in\real{}| \valuefuncB{v}{z}{\xi}{n} & =    \displaystyle\max_{\hat{\xi}\in\controlspace(\xi)}\{\valuefuncB{v}{z}{\hat{\xi}}{n-1}-K\}\},
     \end{split}
  \end{cases} \label{def:stoppingContinuousRegion} 
\end{align}  
where $\controlspace{(\xi)}$ denotes the feasible set of the states (positions) into which the current state $\xi$ can transfer, which is subset of $\positionspace$.

For convenience, we define $\stoppingRegion{0}\equiv\phi(\text{null set}), \; \continuationRegion{0} \equiv \real{}$ and
\begin{align}
  \stoppingRegionA{n}{\xi\hat{\xi}} &\equiv \{z\in\real{}| \valuefuncB{v}{z}{\xi}{n} =    \valuefuncB{v}{z}{{\hat{\xi}}}{n-1}-K\}, & \label{def:stoppingRegioneach}\\
  \stoppingRegionA{0}{\xi\hat{\xi}} &\equiv  \phi, & n=0\nonumber\\
  \continuationRegion{0} &\equiv \real{}, & n=0\nonumber
\end{align}  

\begin{theorem}[Continuity  and uniqueness: \citeN{suzuki2020optimal}] \label{theo:continuity}~\\
    $\vf, \; n\geq 1$ is continuous \wrt $z$ in $\real{}$.
    Moreover, $\vf$ is a unique viscosity solution with linear growth to (\ref{eq:variationalInequality}) in $\real{}$.
\end{theorem}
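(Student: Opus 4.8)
The plan is to derive both continuity and uniqueness from a single application of the comparison principle (Assumption~\ref{theo:comparison}) to the upper and lower semicontinuous envelopes of the value function, organized as an induction on the number of remaining switches $n$. By Assumption~\ref{theo:viscosityvn} the value function $\vf$ is already a viscosity solution of the variational inequality~(\ref{eq:variationalInequality}), hence simultaneously a viscosity subsolution and a viscosity supersolution in the sense of Notation~\ref{not:solutions}. Write $v^{*}(\cdot,\xi,n)$ and $v_{*}(\cdot,\xi,n)$ for its u.s.c.\ and l.s.c.\ envelopes. Assumptions~\ref{theo:finite} and~\ref{theo:boundedVf} give $-D\le\vf\le D(1+|z|)$, so both envelopes inherit a linear growth bound $|v^{*}|,|v_{*}|\le D'(1+|z|)$, which is exactly the growth condition required to invoke Assumption~\ref{theo:comparison}.

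First I would settle the base case $n=0$ on its own, because~(\ref{eq:variationalInequality0}) is the linear elliptic equation $\delta V-\generator{V}-f=0$ rather than an obstacle problem. Continuity of $\valuefuncB{v}{z}{\xi}{0}$ in $z$ follows from its stochastic representation as the discounted running reward of the no-switching strategy together with the continuous dependence of the diffusion $\{Z_t\}$ on its initial state (equivalently, from interior regularity of the linear PDE). This is the anchor of the induction: once $\valuefuncB{v}{z}{\hat{\xi}}{0}$ is continuous, the obstacle $\max_{\hat{\xi}\in\controlspace(\xi)}\{\valuefuncB{v}{z}{\hat{\xi}}{0}-K\}$ appearing at level $n=1$ is a finite maximum of continuous functions, hence continuous.

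For the inductive step I assume $\valuefuncB{v}{z}{\hat{\xi}}{n-1}$ is continuous in $z$ for every $\hat{\xi}\in\controlspace(\xi)$, so that the obstacle at level $n$ is continuous and the data of~(\ref{eq:variationalInequality}) are regular enough for Assumption~\ref{theo:comparison} to apply. The PDE operator $F=\delta V-\generator{V}-f$ is degenerate elliptic and the obstacle term $G$ carries no derivatives, so the variational-inequality operator $\min\{F,G\}$ is itself degenerate elliptic; the standard envelope-stability of discontinuous viscosity solutions then shows that $v^{*}$ is a u.s.c.\ viscosity subsolution and $v_{*}$ an l.s.c.\ viscosity supersolution of~(\ref{eq:variationalInequality}) (the supersolution part may also be read off by decomposing through Lemma~\ref{prop:VI_supersolution}, equation~(\ref{eq:VIsuper})). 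Applying Assumption~\ref{theo:comparison} with $U_\xi=v^{*}(\cdot,\xi,n)$ and $V_\xi=v_{*}(\cdot,\xi,n)$ gives $v^{*}\le v_{*}$ on $\real{}$, while $v_{*}\le v^{*}$ holds by definition of the envelopes; hence $v_{*}=\vf=v^{*}$ and $\vf$ is continuous in $z$ on $\real{}$, closing the induction.

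Uniqueness follows from the same comparison, now applied to two competitors. If $w(\cdot,\xi,n)$ is any viscosity solution of~(\ref{eq:variationalInequality}) with linear growth, then $v^{*}$ is a subsolution and $w_{*}$ a supersolution, so Assumption~\ref{theo:comparison} yields $v^{*}\le w_{*}$ and therefore $\vf\le v^{*}\le w_{*}\le w$; exchanging the roles of $v$ and $w$ gives the reverse inequality, whence $\vf=w$. I expect the genuine work to lie not in the comparison step, which is postulated, but in verifying that the semicontinuous envelopes really remain sub- and supersolutions of the obstacle problem~(\ref{eq:variationalInequality}) and retain the linear growth needed to trigger Assumption~\ref{theo:comparison}, and in securing the base-case continuity of the linear problem~(\ref{eq:variationalInequality0}) on which the whole induction rests.
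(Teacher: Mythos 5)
Note first that the paper does not actually prove Theorem~\ref{theo:continuity}: it is imported verbatim from \citeN{suzuki2020optimal} (where the comparison principle is established rather than assumed), so there is no in-paper proof to measure your argument against. Judged on its own terms, within the framework this paper sets up, your argument is sound and is the standard Barles--Perthame route: Assumption~\ref{theo:viscosityvn} makes $\vf$ a (possibly discontinuous) viscosity solution, Assumptions~\ref{theo:finite} and~\ref{theo:boundedVf} give the two-sided linear growth bound needed for the envelopes, envelope stability makes $v^{*}$ a u.s.c.\ subsolution and $v_{*}$ an l.s.c.\ supersolution, and Assumption~\ref{theo:comparison} then forces $v^{*}\le v_{*}$, hence $v_{*}=v=v^{*}$ and continuity; uniqueness is the same comparison applied to two competitors. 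Two remarks on emphasis rather than correctness. First, since the comparison principle is taken here as a bare assumption for equation~(\ref{eq:variationalInequality}), your induction on $n$ (to guarantee continuity of the obstacle $\max_{\hat{\xi}\in\controlspace(\xi)}\{\valuefuncB{v}{z}{\hat{\xi}}{n-1}-K\}$) is not formally required to invoke it, though it is exactly the structure one would need if, as in \citeN{suzuki2020optimal}, the comparison principle had to be \emph{proved} for each level of the recursion; your instinct there is right, and the base case $n=0$ does need the separate argument you give, because Assumption~\ref{theo:comparison} is stated only for~(\ref{eq:variationalInequality}), not for~(\ref{eq:variationalInequality0}). Second, the one step you correctly flag as ``the genuine work''---that the semicontinuous envelopes remain sub-/supersolutions of the obstacle problem---is where all the actual content sits once comparison is postulated; in this paper's formulation it is essentially built into the (envelope-based) definition of discontinuous viscosity solutions, so nothing in your chain of inequalities breaks.
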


\begin{lemma}[Viscosity solution $\vf$ to $\delta V -\generator{V}-f=0 ${ on }  $\continuationRegionN$: \citeN{suzuki2020optimal}] \label{prop:solutionOnC}~\\
  \begin{align}
    \valuefuncB{v}{z}{\xi}{n}&\in\setViscositySolution{  \delta V -\generator{V}-f(z, \xi)=0} \text{ on } \continuationRegionN,\; n\geq 1
  \end{align}
\end{lemma}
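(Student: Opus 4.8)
The plan is to prove the two one-sided inclusions $\vf\in\setViscositySuperSolution{\delta V-\generator{V}-f(z,\xi)=0}$ and $\vf\in\setViscositySubSolution{\delta V-\generator{V}-f(z,\xi)=0}$ separately on $\continuationRegionN$, and then combine them via $\setViscositySolution{\cdot}=\setViscositySuperSolution{\cdot}\cap\setViscositySubSolution{\cdot}$. Write $F(z,V,DV,D^2V)=\delta V-\generator{V}-f(z,\xi)$ for the PDE part and $G(z,V)=V-\max_{\hat\xi\in\controlspace(\xi)}\{\valuefuncB{v}{z}{\hat\xi}{n-1}-K\}$ for the non-differential obstacle part, so that Assumption~\ref{theo:viscosityvn} reads $\vf\in\setViscositySolution{\min\{F,G\}=0}$ on $\real{}$. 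First I would record that $\continuationRegionN$ is an \emph{open} subset of $\real{}$: by Theorem~\ref{theo:continuity} each of $\valuefuncB{v}{z}{\xi}{n}$ and $\valuefuncB{v}{z}{\hat\xi}{n-1}$ is continuous in $z$, hence so is the finite maximum defining $G$, and by (\ref{def:stoppingContinuousRegion}) the region $\continuationRegionN$ is exactly the set where the continuous quantity $G$ is strictly positive.

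For the supersolution inclusion I would simply invoke (\ref{eq:VIsuper}) of Lemma~\ref{prop:VI_supersolution}, namely $\setViscositySuperSolution{\min\{F,G\}=0}=\setViscositySuperSolution{F=0}\cap\setViscositySuperSolution{G=0}$. Since $\vf$ is a viscosity solution, hence a supersolution, of $\min\{F,G\}=0$ on all of $\real{}$, it lies in $\setViscositySuperSolution{F=0}$ on $\real{}$; because the supersolution property localizes to open subsets, this restricts to $\setViscositySuperSolution{F=0}$ on $\continuationRegionN$.

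The subsolution inclusion is the real content, since the counterpart (\ref{eq:VIsubF}) only runs the unhelpful way, $\setViscositySubSolution{F=0}\subset\setViscositySubSolution{\min\{F,G\}=0}$. Here I would exploit the strict positivity of the obstacle on the continuation region. Fix $\bar z\in\continuationRegionN$ and any sub-test function $\TestFuncMax{\varphi}{\bar z}{v}$ on $\continuationRegionN$; after a standard localization (penalizing $\varphi$ away from $\bar z$ so that $v-\varphi$ attains a global maximum at $\bar z$ without changing $D\varphi(\bar z),D^2\varphi(\bar z)$) this is also a sub-test function at $\bar z$ on $\real{}$, so the subsolution property in Assumption~\ref{theo:viscosityvn} gives $\min\{F(\bar z,v,D\varphi,D^2\varphi),\,G(\bar z,V^*(\bar z))\}\leq 0$. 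Because $G$ is non-differential, $G(\bar z,V^*(\bar z))=V^*(\bar z)-\max_{\hat\xi}\{\valuefuncB{v}{\bar z}{\hat\xi}{n-1}-K\}$ does not involve the derivatives of $\varphi$, and continuity of $\vf$ (Theorem~\ref{theo:continuity}) gives $V^*=\vf$, whence $G(\bar z)>0$ on $\continuationRegionN$. The minimum can therefore only be realized by the first argument, forcing $F(\bar z,v,D\varphi,D^2\varphi)\leq 0$; as $\bar z$ and $\varphi$ were arbitrary, $\vf\in\setViscositySubSolution{F=0}$ on $\continuationRegionN$. Intersecting with the supersolution inclusion yields the claim.

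The step I expect to be the main obstacle is making the subsolution argument rigorous at the level of test functions: one must verify that a sub-test function relative to the open set $\continuationRegionN$ promotes to a legitimate sub-test function on $\real{}$ (so that Assumption~\ref{theo:viscosityvn} applies at $\bar z$, using the linear growth of $\vf$ to dominate the penalization), and that the pointwise strict inequality $G>0$ survives the passage to the upper-semicontinuous envelope $V^*$ in the subsolution definition — both points hinging on the openness of $\continuationRegionN$ and the continuity supplied by Theorem~\ref{theo:continuity}.
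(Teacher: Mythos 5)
Your proof is correct: the supersolution half follows from the equality (\ref{eq:VIsuper}) applied on $\real{}$ and then restricted to the open set $\continuationRegion{n}$, and the subsolution half from the strict inequality $v-\max_{\hat{\xi}}\{\valuefuncB{v}{z}{\hat{\xi}}{n-1}-K\}>0$ defining $\continuationRegion{n}$, which (using continuity from Theorem \ref{theo:continuity} so that $V^*=v$) forces the minimum in the variational inequality to be attained by the PDE term; the promotion of a local sub-test function to a global one via penalization dominated by the linear growth of $v$ is the only technical point, and you handle it correctly. Note that this paper states the lemma without proof, importing it from \citeN{suzuki2020optimal}, so there is no in-paper proof to compare against, but your argument is the standard one such a proof takes.
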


\begin{lemma}[$\vf$ classically solves   $\delta V -\generator{V}-f=0 ${ on }  $\continuationRegionN$: \citeN{suzuki2020optimal}]
  \label{theo:classicalPart}~\\
$\vf, \; n\geq 1$ is a classical solution to the following differential equation on $\continuationRegionN$ with the boundary condition,  and it is unique.
\begin{align}
  -\generator{V}+\delta V &=f  \text{ on } \continuationRegionN,\\
  V&=v  \text{ in } \partial\continuationRegionN,\\
  \text{moreover, }\; \vf \in C^2(\continuationRegionN) \label{eq:ODE} 
\end{align}
\end{lemma}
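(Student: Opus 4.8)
The plan is to upgrade the viscosity-solution status recorded in Lemma~\ref{prop:solutionOnC} to genuine $\continuousFunction{2}$ regularity by a local comparison argument: on small intervals one replaces $\vf$ by a classically constructed solution and then identifies the two through uniqueness. Throughout I would use that $\continuationRegionN$ is open. Indeed, $\vf$ is continuous on $\real{}$ by Theorem~\ref{theo:continuity} and the obstacle $\max_{\hat{\xi}\in\controlspace(\xi)}\{\valuefuncB{v}{z}{\hat{\xi}}{n-1}-K\}$ is continuous as well, so the strict-inequality set defining $\continuationRegionN$ in (\ref{def:stoppingContinuousRegion}) is open; hence every point of $\continuationRegionN$ lies in a bounded open interval $I=(a,b)$ with $\overline{I}\subset\continuationRegionN$.

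First I would establish local classical solvability. On such an $I$ the equation $-\generator{V}+\delta V=f$ is a linear, nondegenerate second-order ordinary differential equation, the generator $\generatorx{}{}$ of the one-dimensional diffusion $\{Z_t\}$ carrying a strictly positive diffusion coefficient, and its coefficients together with the running reward $f(\cdot,\xi)$ are assumed smooth enough. Classical theory for two-point boundary value problems then yields a unique $w\in\continuousFunction{2}(\overline{I})$ solving $-\generator{w}+\delta w=f$ on $I$ with Dirichlet data $w=\vf$ on $\{a,b\}$.

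Next I would identify $w$ with $\vf$. Being classical, $w$ is in particular a viscosity solution of $\delta V-\generator{V}-f=0$ on $I$, and by Lemma~\ref{prop:solutionOnC} (restricted to the open subset $I\subset\continuationRegionN$) so is $\vf$; the two share the same continuous boundary values on $\partial I$. Because $\delta>0$, the linear equation obeys the comparison (maximum) principle of the viscosity theory of \citeN{CrandallIshiiLions:92}, so comparing $\vf$ against $w$ as sub- and supersolution in both directions gives $\vf\le w$ and $w\le\vf$ on $\overline{I}$, hence $\vf=w$ there. Consequently $\vf\in\continuousFunction{2}(I)$ and solves the equation classically on $I$. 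Since $\continuationRegionN$ is exhausted by such intervals, $\vf\in\continuousFunction{2}(\continuationRegionN)$ and is a classical solution of $-\generator{V}+\delta V=f$ on all of $\continuationRegionN$. The boundary condition $V=v$ on $\partial\continuationRegionN$ is immediate from the continuity of $\vf$ in Theorem~\ref{theo:continuity}, and uniqueness of the classical solution follows from the same comparison principle, equivalently from uniqueness for the linear two-point boundary value problem.

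The hard part will be the local classical existence step: it relies on $\generatorx{}{}$ being genuinely (uniformly) nondegenerate on each compactly contained interval, together with enough regularity of the drift, diffusion, and running reward $f$, so that the two-point boundary value problem is classically well posed. In a truly degenerate-elliptic regime the Dirichlet problem need not admit a $\continuousFunction{2}$ solution and the argument would collapse; making the nondegeneracy and smoothness hypotheses explicit, and checking that the comparison principle applies to the linear equation on bounded intervals rather than only to the full variational inequality of Assumption~\ref{theo:comparison}, is the delicate point of the proof.
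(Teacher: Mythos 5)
Your core argument is sound, but note first that there is no in-paper proof to compare against: the lemma is quoted from \citeN{suzuki2020optimal} and stated here without proof. The scheme you propose --- openness of $\continuationRegion{n}$ from continuity of the value function and the obstacle (Theorem~\ref{theo:continuity}), classical solvability of the linear Dirichlet problem on bounded intervals $\overline{I}\subset\continuationRegion{n}$, and identification of $\valuefuncB{v}{z}{\xi}{n}$ with that classical solution by the viscosity comparison principle applied to Lemma~\ref{prop:solutionOnC} --- is precisely the standard argument underlying the cited result, so on the regularity and classical-solution claims your proof is correct. Your explicit flagging of nondegeneracy is also well placed: the abstract setting of Section~\ref{sec:assumptions_problem} never states it, but Theorem~\ref{theo:piecewiseClassicalViscositySolution} assumes a non-degenerate elliptic $F$, and in the worked example the generator $\generatorx{}{}=\frac{\theta}{2}\frac{d^2}{dz^2}-\theta z\frac{d}{dz}$ has constant diffusion coefficient, so the hypothesis is consistent with the paper's usage.

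The genuine gap is in the uniqueness claim. Your comparison argument settles uniqueness only on bounded components of $\continuationRegion{n}$, where the problem really is a two-point boundary value problem. In this paper, however, the continuation regions typically have unbounded components (Algorithm~\ref{algo:system_freeBoundary} explicitly treats continuation regions containing $\pm\infty$, with the asymptotic condition~(\ref{lim:infty})), and there the Dirichlet data on $\partial\continuationRegion{n}$ alone does not determine the solution: the homogeneous equation $\delta V-\generator{V}=0$ has a one-parameter family of classical solutions vanishing at the single finite endpoint --- in the example, combinations of $\HermiteNu{z}$ and $\HermiteNu{-z}$, which grow super-linearly at the unbounded end --- so infinitely many classical solutions satisfy the stated boundary condition. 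Uniqueness must therefore be asserted within the linear-growth class of Assumption~\ref{theo:finite} (equivalently, supplemented by the asymptotic condition~(\ref{lim:infty})), exactly as Theorem~\ref{theo:continuity} does for the viscosity solution; with that restriction, comparison for linear-growth sub- and supersolutions on unbounded intervals applies and your argument extends to close the gap.
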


From Lemma \ref{theo:classicalPart} and (\ref{def:stoppingContinuousRegion}), the following corollary holds.
\begin{corollary} \label{eq:valuefunctionallregionN}
    {For $n\geq 1$, the value function $v$ (classically)  solves either of the following equations depending on $z\in\real{}=\continuationRegionN\oplus\stoppingRegionN$:} 
  \begin{align}
    \begin{cases}
      \delta v -\generator{v}-f=0, & z \in \continuationRegionN, \\
      v  -\max_{\hat{\xi}\in\controlspace(\xi)}\{\valuefuncB{v}{z}{\hat{\xi}}{n-1}-K\}   =0,  & z \in \stoppingRegionN. \label{eq:DEclassical}
    \end{cases}
  \end{align}
\end{corollary}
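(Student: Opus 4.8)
The plan is to read both equations of (\ref{eq:DEclassical}) directly off the structure already assembled: the region definitions (\ref{def:stoppingContinuousRegion}), the variational inequality (\ref{eq:variationalInequality}), and Lemma \ref{theo:classicalPart}. No genuinely new analysis is required; the entire content is to check that the two regions partition $\real{}$ and then to dispatch the two cases. So I would organize the argument as: establish the partition $\real{}=\continuationRegion{n}\oplus\stoppingRegion{n}$, then treat $\continuationRegion{n}$ and $\stoppingRegion{n}$ separately.

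First I would verify that $\real{}=\continuationRegion{n}\oplus\stoppingRegion{n}$ is a disjoint exhaustion. The two sets in (\ref{def:stoppingContinuousRegion}) are carved out by the mutually exclusive conditions $v(z,\xi,n)>\max_{\hat{\xi}\in\controlspace(\xi)}\{v(z,\hat{\xi},n-1)-K\}$ and $v(z,\xi,n)=\max_{\hat{\xi}\in\controlspace(\xi)}\{v(z,\hat{\xi},n-1)-K\}$, so they are automatically disjoint; the only thing to rule out is a strict deficit $v<\max\{\cdots\}$. This is where the supersolution property enters. Writing $F=\delta V-\generator{V}-f$ and $G=V-\max_{\hat{\xi}\in\controlspace(\xi)}\{v(\cdot,\hat{\xi},n-1)-K\}$, the value function solves $\min\{F,G\}=0$ on $\real{}$ by Assumption \ref{theo:viscosityvn}, hence in particular is a viscosity supersolution. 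By Lemma \ref{prop:VI_supersolution}, identity (\ref{eq:VIsuper}), a supersolution of $\min\{F,G\}=0$ is a supersolution of $G=0$; and since $G=0$ is the non-differential obstacle equation, Notation \ref{not:solutions} reads this in the classical pointwise sense as $G\geq 0$, i.e.\ $v(z,\xi,n)\geq\max_{\hat{\xi}\in\controlspace(\xi)}\{v(z,\hat{\xi},n-1)-K\}$ for every $z$. This obstacle inequality closes the dichotomy and delivers the claimed partition.

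Next I would dispatch the two cases. On $\stoppingRegion{n}$ the second equation of (\ref{eq:DEclassical}) is precisely the defining equality of $\stoppingRegion{n}$ in (\ref{def:stoppingContinuousRegion}), so it holds as a pointwise identity, which is exactly what ``classically solves'' means for a non-differential equation in the sense of Notation \ref{not:solutions}. On $\continuationRegion{n}$ the first equation is the verbatim content of Lemma \ref{theo:classicalPart}, which already asserts that $v(\cdot,\xi,n)$ is a classical (indeed $C^2$) solution of $-\generator{V}+\delta V=f$ there. Combining the two cases over $\real{}=\continuationRegion{n}\oplus\stoppingRegion{n}$ completes the argument.

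The only step needing any care is the partition claim, and even that reduces to invoking the already-established supersolution reading of the obstacle term via Lemma \ref{prop:VI_supersolution}; everything else is bookkeeping. Consequently I expect the sole (mild) obstacle to be purely notational, namely making explicit that for the algebraic second equation ``classically solves'' denotes the pointwise identity guaranteed by the definition of $\stoppingRegion{n}$, rather than any differential statement.
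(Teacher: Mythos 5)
Your proposal is correct and follows essentially the same route as the paper, which obtains the corollary directly from Lemma \ref{theo:classicalPart} (giving the classical PDE on $\continuationRegion{n}$) together with the defining equality in (\ref{def:stoppingContinuousRegion}) (giving the pointwise identity on $\stoppingRegion{n}$). The only difference is that you additionally spell out the partition claim $\real{}=\continuationRegion{n}\oplus\stoppingRegion{n}$ by reading the obstacle inequality $v\geq \max_{\hat{\xi}\in\controlspace(\xi)}\{\valuefuncB{v}{z}{\hat{\xi}}{n-1}-K\}$ off the supersolution property via Assumption \ref{theo:viscosityvn}, Lemma \ref{prop:VI_supersolution} and Notation \ref{not:solutions} --- a point the paper leaves implicit --- which is a harmless and indeed welcome addition rather than a different approach.
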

\def\plainA#1{  {{#1}}^{\xi\hat{\xi}}_n  }
\def\Ominus{{\regionO}^{-}}
\def\Oplus{{\regionO}^{+}}
\def\CC#1#2{C^{#1, #2}_n}
\def\C#1{C^{#1}_n}

\subsection{Simultaneous Switchings and Structure of the Switching Regions}\label{sec:struct_switch}


 If the exact $m$time(s) consecutive simultaneous nonrecurrent switchings $\xi^{(0)}\mapsto\cdots\mapsto\xi^{(m)}$ to the stable continuation state (regime) $\continuationRegionA{n-m}{{\xi}^{(m)}}$  is optimal,  we define the region $\stoppingRegionA{n}{\xi^{(0)}\cdots{\xi}^{(p)}}$ for the optimal intermediate series of $p\leq m$ simultaneous  consecutive switchings $\xi^{(0)}\mapsto\cdots \mapsto{\xi}^{(p)}$ out of its optimal full series of $m$ consecutive switchings for $p=1, \cdots m$ as follows:
\begin{align}
  \stoppingRegionA{n}{\xi^{(0)}\cdots{\xi}^{(p)}}\equiv \bigcap_{\begin{subarray}{l} {\xi}^{(q+1)}\in\controlspace(\xi^{(q)})\\ q=0, \cdots, p-1 \end{subarray} } \stoppingRegionA{n-q}{\xi^{(q)}{\xi}^{(q+1)}}. \label{eq:simulSwitch}
\end{align}
For convenience, for $p=0$ we let $\stoppingRegionA{n}{\xi^{(0)}\cdots{\xi}^{(p)}}=\real{}$, and not $\stoppingRegionA{n}{\xi^{(0)}}$ as in (\ref{def:stoppingContinuousRegion}).
 
We define the following disjoint stable state region  $\continuationAfterSwitchingRegionA{n,m}{\xi^{(0)}\cdots{\xi}^{(m)}}\subset\real{}$, where the exact $m-$time(s) series of simultaneous nonrecurrent switchings ${\xi^{(0)}\mapsto\xi^{(1)}\mapsto \cdots\mapsto{\xi}^{(m)}}$ for ${\xi}^{(p)}\in\controlspace(\xi^{(p-1)}), p=1, \cdots, m$ to the stable continuation state $\continuationRegionA{n-m}{{\xi}^{(m)}}$, is the optimal switching strategy at $z\in\continuationAfterSwitchingRegionA{n,m}{\xi^{(0)}\cdots{\xi}^{(m)}}$:  for ${\xi}^{(p)}\in\controlspace(\xi^{(p-1)}), p=1, \cdots, m$ for $m\geq 0$, 
\begin{align}
  \continuationAfterSwitchingRegionA{n,m}{\xi^{(0)}\cdots{\xi}^{(m)}}&\equiv   \stoppingRegionA{n}{\xi^{(0)}\cdots{\xi}^{(m)}} \cap \continuationRegionA{n-m}{{\xi}^{(m)}} \qquad \cdots (m\geq 1), \label{eq:relationOSC}
\end{align}
where in our rule of notation, if the series $\xi^{(0)}\cdots{\xi}^{(m)}$ of the states for switchings is empty, the corresponding simultaneous switching region $\stoppingRegionA{n}{\xi^{(0)}\cdots{\xi}^{(m)}}$ should also be empty. That is,
\begin{align}
  \continuationAfterSwitchingRegionA{n,0}{\xi^{(0)}}&\equiv \continuationRegionA{n}{{\xi}^{(0)}} \qquad \cdots (m=0).
\end{align}

\begin{omitlevel3}
The switching regions are expressed to be divided into the series of the continuation state sets $\continuationAfterSwitchingRegionA{n,m}{\xi^{(0)}\cdots{\xi}^{(m)}}$ to switch to as follows.
\begin{align}  
  \real{}&=\bigoplus_{\begin{subarray}{l} {\xi}^{(p)}\in\controlspace(\xi^{(p-1)})\\ p=1, \cdots, m\\ m=0,1,\cdots \end{subarray}}    \continuationAfterSwitchingRegionA{n,m}{\xi^{(0)}\cdots{\xi}^{(m)}} \text{ for } \forall\xi^{(0)}\in \positionspace   \label{eq:seemlessR}\\
  \stoppingRegionA{n}{\xi^{(0)}}&=\bigoplus_{\begin{subarray}{l} {\xi}^{(p)}\in\controlspace(\xi^{(p-1)})\\ p=1, \cdots, m\\ m=1,2,\cdots \end{subarray}}    \continuationAfterSwitchingRegionA{n,m}{\xi^{(0)}\cdots{\xi}^{(m)}} \text{ for } \xi^{(0)}\in \positionspace  \label{eq:seemlessStoppingRegion}\\
  \stoppingRegionA{n}{\xi^{(0)}\xi^{(1)}}&=\bigoplus_{\begin{subarray}{l} {\xi}^{(p)}\in\controlspace(\xi^{(p-1)})\\ p=2, \cdots, m\\ m=1,2,\cdots \end{subarray}}    \continuationAfterSwitchingRegionA{n,m}{\xi^{(0)}\cdots{\xi}^{(m)}} \text{ for } {\xi}^{(1)}\in\controlspace(\xi^{(0)}), \xi^{(0)}\in \positionspace,  \label{eq:seemlessStoppingRegion2}
\end{align} 
where when the set $p=2, \cdots, m$ of indices of the nonrecurrent states is empty, corresponding series of sets ${\xi}^{(p)}\in\controlspace(\xi^{(p-1)})$ are also empty and  the only set $\continuationAfterSwitchingRegionA{n,m}{\xi^{(0)}\cdots{\xi}^{(m)}}$ remains, i.e., $\bigoplus_{\begin{subarray}{l} \{\phi\}\\ m=1 \end{subarray}} \continuationAfterSwitchingRegionA{n,m}{\xi^{(0)}\cdots{\xi}^{(m)}} = \continuationAfterSwitchingRegionA{n,1}{\xi^{(0)}{\xi}^{(1)}}$.
If $m=0$ and the set of indices $p=\cdots, m$ is empty, $\continuationAfterSwitchingRegionA{n,0}{\xi^{(0)}}=\continuationRegionA{n}{{\xi}^{(0)}}$ is included.
\end{omitlevel3}

\def\Ominus{{\regionO}^{-}}
\def\Oplus{{\regionO}^{+}}

\begin{lemma}[Special test functions on the optimal switching regions: \citeN{suzuki2020optimal}] \label{prop:test_funct_simultaneous}~\\
For each number $m=0, 1, \cdots$  of consecutive simultaneous switchings before continuation, the function: 
\begin{align}
  \varphi^{\xi^{(m)}}_{n, m}(z)\equiv\valuefuncB{v}{z}{\xi^{(m)}}{n-m} - mK \in\continuousFunction{2}(\continuationRegionA{n-m}{{\xi}^{(m)}})  \text{ (from (\ref{eq:ODE})) defined on the open set } \continuationRegionA{n-m}{{\xi}^{(m)}}\supset\continuationAfterSwitchingRegionA{n,m}{\xi^{(0)}\cdots{\xi}^{(m)}}
\end{align}
is a super test function for the value function $\valuefuncB{v}{z}{\xi}{n}$  minimized at any $\bar{z}\in\continuationAfterSwitchingRegionA{n,m}{\xi^{(0)}\cdots{\xi}^{(m)}}\subset\continuationRegionA{n-m}{{\xi}^{(m)}}$, i.e.,
\begin{align}
  \TestFuncMin{{\varphi^{\xi^{(m)}}_{n, m}(z)} \in \continuousFunction{2}(\continuationRegionA{n-m}{{\xi}^{(m)}})}{\forall\bar{z}\in\continuationAfterSwitchingRegionA{n,m}{\xi^{(0)}\cdots{\xi}^{(m)}}}{\valuefuncB{v}{z}{\xi}{n}} \text{ on } \continuationAfterSwitchingRegionA{n,m}{\xi^{(0)}\cdots{\xi}^{(m)}}.\label{eq:specialTestFunc}
\end{align}
Notation relating to the test function is based upon \citeN{suzuki2020optimal}.

Moreover,  on $\continuationAfterSwitchingRegionA{n,m}{\xi^{(0)}\cdots{\xi}^{(m)}}$  for ${\xi}^{(p+1)}\in\controlspace(\xi^{(p)}),\quad \xi^{(0)}\in\positionspace$,
\begin{align}
  \varphi^{\xi^{(0)}}_{n, 0}(z)&= \varphi^{\xi^{(p)}}_{n, p}(z)\quad p=0, \cdots, m.   \label{eq:special_testFunction_p}
\end{align}
\end{lemma}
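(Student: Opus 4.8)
The plan is to prove the telescoping identity (\ref{eq:special_testFunction_p}) first and then use it to supply the touching (equality) condition in the super-test-function claim (\ref{eq:specialTestFunc}); throughout I identify the base regime $\xi$ in $\valuefuncB{v}{z}{\xi}{n}$ with $\xi^{(0)}$. Recall from Definition~\ref{def:testFunction} that establishing $\TestFuncMin{\varphi^{\xi^{(m)}}_{n,m}}{\bar z}{\valuefuncB{v}{z}{\xi^{(0)}}{n}}$ amounts to three verifications: (i) $\varphi^{\xi^{(m)}}_{n,m}\in\continuousFunction{2}$ on the open neighborhood $\continuationRegionA{n-m}{\xi^{(m)}}$; (ii) the global inequality $\valuefuncB{v}{z}{\xi^{(0)}}{n}-\varphi^{\xi^{(m)}}_{n,m}(z)\geq 0$ on $\continuationRegionA{n-m}{\xi^{(m)}}$; and (iii) equality at the base point, $\valuefuncB{v}{\bar z}{\xi^{(0)}}{n}=\varphi^{\xi^{(m)}}_{n,m}(\bar z)$ for every $\bar z\in\continuationAfterSwitchingRegionA{n,m}{\xi^{(0)}\cdots\xi^{(m)}}$.

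For (\ref{eq:special_testFunction_p}), note by (\ref{eq:relationOSC}) that $\continuationAfterSwitchingRegionA{n,m}{\xi^{(0)}\cdots\xi^{(m)}}\subset\stoppingRegionA{n}{\xi^{(0)}\cdots\xi^{(m)}}$, and by the definition (\ref{eq:simulSwitch}) the latter is contained in each single-pair region $\stoppingRegionA{n-q}{\xi^{(q)}\xi^{(q+1)}}$, $q=0,\ldots,m-1$. On $\stoppingRegionA{n-q}{\xi^{(q)}\xi^{(q+1)}}$ the defining relation (\ref{def:stoppingRegioneach}) reads $\valuefuncB{v}{z}{\xi^{(q)}}{n-q}=\valuefuncB{v}{z}{\xi^{(q+1)}}{n-q-1}-K$. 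Chaining these equalities for $q=0,\ldots,p-1$ telescopes to $\valuefuncB{v}{z}{\xi^{(0)}}{n}=\valuefuncB{v}{z}{\xi^{(p)}}{n-p}-pK$, i.e. $\varphi^{\xi^{(0)}}_{n,0}(z)=\varphi^{\xi^{(p)}}_{n,p}(z)$ for all $p=0,\ldots,m$, which is (\ref{eq:special_testFunction_p}); specializing to $p=m$ gives verification (iii).

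Verification (i) is immediate from Lemma~\ref{theo:classicalPart}, since $\valuefuncB{v}{z}{\xi^{(m)}}{n-m}\in\continuousFunction{2}(\continuationRegionA{n-m}{\xi^{(m)}})$ and subtracting the constant $mK$ preserves $\continuousFunction{2}$-regularity. The substance is verification (ii), and here my plan is to extract the obstacle inequality directly from the variational inequality rather than from any dynamic-programming argument. Since $\valuefuncB{v}{z}{\xi}{n}$ solves (\ref{eq:variationalInequality}) it is in particular a viscosity supersolution of $\min\{F,G\}=0$ with $F=\delta V-\generator{V}-f$ and $G=V-\max_{\hat\xi\in\controlspace(\xi)}\{\valuefuncB{v}{z}{\hat\xi}{n-1}-K\}$; by the supersolution splitting (\ref{eq:VIsuper}) of Lemma~\ref{prop:VI_supersolution} it is then a supersolution of the non-differential equation $G=0$, which by the convention of Notation~\ref{not:solutions} is the pointwise statement $\valuefuncB{v}{z}{\xi}{n}\geq\valuefuncB{v}{z}{\hat\xi}{n-1}-K$ for every $\hat\xi\in\controlspace(\xi)$ and every $z\in\real{}$. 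Applying this one-step inequality along the feasible chain $\xi^{(0)}\to\cdots\to\xi^{(m)}$ (legitimate because $\xi^{(q+1)}\in\controlspace(\xi^{(q)})$ and $n-q\geq 1$) telescopes to $\valuefuncB{v}{z}{\xi^{(0)}}{n}\geq\valuefuncB{v}{z}{\xi^{(m)}}{n-m}-mK=\varphi^{\xi^{(m)}}_{n,m}(z)$ on all of $\real{}$, hence on $\continuationRegionA{n-m}{\xi^{(m)}}$. Together with (iii) this makes the minimum of $\valuefuncB{v}{z}{\xi^{(0)}}{n}-\varphi^{\xi^{(m)}}_{n,m}(z)$ over $\continuationRegionA{n-m}{\xi^{(m)}}$ equal to $0$ and attained at $\bar z$, which is exactly (\ref{eq:specialTestFunc}).

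I expect the one genuinely delicate point to be the reduction inside verification (ii) from ``$\valuefuncB{v}{z}{\xi}{n}$ is a viscosity supersolution of the $\min$-equation'' to the pointwise obstacle inequality. The leverage is (\ref{eq:VIsuper}), which cleanly decouples the supersolution property into the PDE part and the obstacle part so that only the obstacle part is used; one must also confirm that for the non-differential $G$ the viscosity-supersolution notion collapses to the classical pointwise inequality $G\geq 0$, which is guaranteed by the continuity of $\valuefuncB{v}{z}{\xi}{n}$ from Theorem~\ref{theo:continuity} (so that its lower-semicontinuous envelope coincides with itself). Once that reduction is secured, everything else is the purely algebraic telescoping already described, requiring no further viscosity-theoretic input.
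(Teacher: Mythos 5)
Your proof is correct, but note that this paper never proves the lemma itself: it is stated as imported from \citeN{suzuki2020optimal}, with no proof environment following it, so the only possible comparison is against the framework the paper sets up, and within that framework your argument checks out. Your three verifications are exactly what the (imported) definition of a super test function demands, and each is discharged with tools the paper provides: $\continuousFunction{2}$-regularity of $\varphi^{\xi^{(m)}}_{n,m}$ on the open set $\continuationRegionA{n-m}{{\xi}^{(m)}}$ from Lemma \ref{theo:classicalPart}; the touching equality from the telescoping identity (\ref{eq:special_testFunction_p}), which you correctly derive by combining (\ref{eq:relationOSC}) and (\ref{eq:simulSwitch}) with the defining equality (\ref{def:stoppingRegioneach}) on each pairwise region $\stoppingRegionA{n-q}{\xi^{(q)}\xi^{(q+1)}}$ (all of which contain the base point simultaneously, so the chain is legitimate); and the global bound $\valuefuncB{v}{z}{\xi^{(0)}}{n}\geq \varphi^{\xi^{(m)}}_{n,m}(z)$ on $\real{}$ by iterating the one-step obstacle inequality extracted from Assumption \ref{theo:viscosityvn} via the supersolution splitting (\ref{eq:VIsuper}) of Lemma \ref{prop:VI_supersolution} and the classical-sense convention of Notation \ref{not:solutions}; your appeal to Theorem \ref{theo:continuity} to collapse the lower-semicontinuous envelope is the right way to close that reduction. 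Two remarks. First, your route through (\ref{eq:VIsuper}) is more careful than strictly necessary: the pointwise inequality $v\geq\max_{\hat{\xi}}\{v(\cdot,\hat{\xi},n-1)-K\}$ is already presupposed by the paper's decomposition (\ref{def:stoppingContinuousRegion}) of $\real{}$ into $\continuationRegion{n}$ and $\stoppingRegion{n}$, so a one-line citation would have sufficed, but your derivation makes the step self-contained rather than circular. Second, the one loose end — inherited from the lemma statement rather than created by you — is the edge case $n-m=0$: Lemma \ref{theo:classicalPart} asserts $\continuousFunction{2}$-regularity only for $n\geq 1$, so $\valuefuncB{v}{z}{\xi^{(m)}}{0}\in\continuousFunction{2}(\real{})$ needs the classical solvability of (\ref{eq:variationalInequality0}) as a separate (standard) fact; it is trivial in the paper's example, where $\widehat{v}$ is a quadratic polynomial.
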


\section{Piecewise Classical Viscosity Solution}\label{sec:Piecewise classical viscosity solution}
In \citeN{suzuki2020optimal} we study the regularity condition of the value function. 
Apart from the value function itself, we will consider the regularity condition for the viscosity solution solved by the value function.
The viscosity solution is composed of the functions on the continuation regions and those of the switching regions. According to   Lemma \ref{theo:classicalPart}, the function on a continuation region is assumed to solve the PDE classically, that is, the function is in $\continuousFunction{2}$ on the region, while the function on the switching region is a combination of some given $\continuousFunction{2}$-functions. We are interested in the regularity conditions of the connection points between those functions on the continuation regions and those of the switching regions in the problem which allow simultaneous multiple switchings.

We would like to construct the viscosity solution on $\real{}$  from the series of the piecewise $\continuousFunction{2}$-classical solutions.
First, in Theorem \ref{theo:C1 cond at connection} we prove that $\continuousFunction{1}$-condition (smooth pasting condition) at the connection points is a necessary condition for the connected function to be a viscosity solution.
Then in Theorem \ref{theo:piecewiseClassicalViscositySolution} a sufficient condition for the connected function to be the viscosity solution on $\real{}$ is provided.
Based on the condition Algorithm \ref{algo:system_freeBoundary} provides the algorithm to calculate the concrete viscosity solution in a computer.
In Section \ref{sec:example_model} the algorithm is applied to the example problem studied in \citeN{suzuki2020optimal} and calculate the explicit solution.

\begin{theorem}[Smooth pasting condition ($\continuousFunction{1}$-condition) of the solution at the free boundary] \label{theo:C1 cond at connection}~\\
  For each $\xi\in\positionspace$, $\valuefuncB{v}{z}{\hat{\xi}}{n-1}$ for  ${\hat{\xi}\in\controlspace(\xi)}$ are given value functions (\ref{def:valueFunc_criterion}) and the non-degenerate elliptic PDE: $F(z, V(z), DV(z), D^2V(z))=0$ has a classical particular solution $u_0$ on $\overline{\continuationRegion{n}}$.
  Suppose that  
  \begin{align}
    u(z)&\in\setViscositySolution{ \min \bigl\{ F(z, V(z), DV(z), D^2V(z)), V  -\max_{\hat{\xi}\in\controlspace(\xi)}\varphi^{\hat{\xi}}_{n, 1}(z) \bigr\}=0} \text{ on } \real{},  \label{rel:u}
  \end{align}
  and $u(z)=u_0(z)$ on $\overline{\continuationRegion{n}}$.
  Then the function $u$ is of class $\continuousFunction{1}$ at $\bar{z}\in\partial\continuationRegionN$, where 
 \begin{align}
   &\begin{cases} 
     \begin{split}
       \continuationRegion{n} = \{z\in\real{}| u(z) &>  \displaystyle\max_{\hat{\xi}\in\controlspace(\xi)}\varphi^{\hat{\xi}}_{n, 1}(z)\}, 
     \end{split}\\
     \begin{split}
       \stoppingRegion{n} = \{z\in\real{}| u(z) & =     \displaystyle\max_{\hat{\xi}\in\controlspace(\xi)}\varphi^{\hat{\xi}}_{n, 1}(z)\},
     \end{split}
    \end{cases} \label{def:stoppingContinuousRegion_u} 
 \end{align}
\end{theorem}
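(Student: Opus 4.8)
The plan is to argue locally at a boundary point $\bar z\in\partial\continuationRegionN$, treating it as a one-sided matter and splitting according to which side carries the continuation region. First I would record the three facts that drive everything. Write $\psi(z):=\max_{\hat\xi\in\controlspace(\xi)}\varphi^{\hat\xi}_{n,1}(z)$ for the obstacle. Since $u=u_0$ on $\overline{\continuationRegionN}$ with $u_0\in\continuousFunction{2}$ a classical solution of $F=0$, and since $u=\psi$ on the closed set $\stoppingRegionN$, the point $\bar z\in\partial\continuationRegionN\subset\stoppingRegionN$ satisfies $u(\bar z)=u_0(\bar z)=\psi(\bar z)$ by continuity. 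Consequently $u$ has one-sided derivatives at $\bar z$: on the continuation side it equals $u_0'(\bar z)$, and on the stopping side it equals $\psi'(\bar z^{\pm})=(\varphi^{\hat\xi^*}_{n,1})'(\bar z)$ for the index $\hat\xi^*$ active in the maximum on that one-sided neighbourhood, using finiteness of $\controlspace(\xi)$ and the $\continuousFunction{1}$-regularity of the lower-stage value functions $v(\cdot,\hat\xi,n-1)$.

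The core of the proof is two complementary inequalities. Consider the orientation in which $\continuationRegionN$ lies to the left of $\bar z$, and set $a:=u_0'(\bar z)$ (the left derivative of $u$) and $b:=(\varphi^{\hat\xi^*}_{n,1})'(\bar z)$ (the right derivative of $u$). The first inequality, $a\ge b$, I would obtain from the supersolution property and non-degenerate ellipticity. By Lemma~\ref{prop:VI_supersolution}, equation~(\ref{eq:VIsuper}), $u$ is a viscosity supersolution of $F=0$ on $\real{}$, so every $\continuousFunction{2}$ test function touching $u$ from below at $\bar z$ forces $F\ge 0$ there. If instead $a<b$, then the subdifferential of $u$ at $\bar z$ contains the whole interval $(a,b)$: for any interior slope $p\in(a,b)$ one has $u-\ell\ge c|z-\bar z|$ near $\bar z$ for the line $\ell$ of slope $p$, so the downward parabola $\ell(z)+\tfrac{q}{2}(z-\bar z)^2$ touches $u$ from below for every $q\in\real{}$. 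The supersolution inequality would then demand $F(\bar z,u(\bar z),p,q)\ge 0$ for all $q$, contradicting non-degeneracy, under which $F(\bar z,u(\bar z),p,q)\to-\infty$ as $q\to+\infty$. Hence no such kink, and $a\ge b$.

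The reverse inequality $a\le b$ is purely geometric and uses the definition~(\ref{def:stoppingContinuousRegion_u}) of the regions through $u$. On the continuation side one has $u_0(z)=u(z)>\psi(z)\ge\varphi^{\hat\xi^*}_{n,1}(z)$, while $u_0(\bar z)=\varphi^{\hat\xi^*}_{n,1}(\bar z)$; thus $g:=u_0-\varphi^{\hat\xi^*}_{n,1}$ is $\continuousFunction{1}$ near $\bar z$, strictly positive on the left and vanishing at $\bar z$, which forces $g'(\bar z^{-})\le 0$, i.e. $a\le b$. Combining the two gives $a=b$, so the left and right derivatives of $u$ at $\bar z$ coincide and $u\in\continuousFunction{1}$ there. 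The opposite orientation, with $\continuationRegionN$ to the right, is the mirror image: the supersolution step again excludes the kink with the subdifferential having nonempty interior, while the obstacle inequality $u_0>\psi$ on the right supplies the matching bound, and the two one-sided inequalities are exchanged.

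I expect the main obstacle to be the supersolution step, because the subsolution property yields nothing at $\bar z$: there the obstacle term $u-\psi$ vanishes, so the min-inequality defining subsolutions holds trivially and supplies neither bound. All the force for ruling out a corner must therefore come from the supersolution side, and it is precisely non-degeneracy of the second-order term that turns ``$F\ge 0$ for every admissible $q$'' into a contradiction; in the degenerate case the kink would not be excluded. A secondary technical point I would handle carefully is the identification of $\psi'(\bar z^{\pm})$ with the derivative of a single active branch $\varphi^{\hat\xi^*}_{n,1}$, which I would justify by finiteness of $\controlspace(\xi)$ together with local constancy of the maximizing index on a one-sided neighbourhood of $\bar z$.
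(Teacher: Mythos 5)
Your proposal is correct and takes essentially the same route as the paper's own proof: the obstacle relation ($u>\max_{\hat\xi\in\controlspace(\xi)}\varphi^{\hat\xi}_{n,1}$ on $\continuationRegion{n}$ with equality at $\bar z$) yields one one-sided derivative inequality, and the viscosity supersolution property together with non-degenerate ellipticity excludes the opposite strict inequality, since an interior slope of the resulting convex kink would admit subjet elements with arbitrarily large second-order component, forcing $F\geq 0$ where $F\to-\infty$. The only real difference is how the stopping-side derivative is legitimized: you appeal to finiteness of $\controlspace(\xi)$, a locally constant maximizing index, and $\continuousFunction{1}$-regularity of the lower-stage value functions, whereas the paper invokes Lemma \ref{prop:test_funct_simultaneous} to replace the obstacle near $\bar z$ by the single function $\varphi^{\xi^{(m)}}_{n, m}$, which is genuinely $\continuousFunction{2}$ there because it lives on the final continuation region reached by the optimal chain of simultaneous switches --- a cleaner justification of the step you flagged as the secondary technical point.
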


\begin{proof}~\\ 
  For $m\geq 1$, if the exact $m$ time(s) consecutive simultaneous nonrecurrent switching(s): $\xi\mapsto\xi^{(1)}\mapsto\cdots\mapsto\xi^{(m)}, m\geq 1$ to the stable continuation state (regime) $\continuationRegionA{n-m}{{\xi}^{(m)}}$  is the optimal at $z=\bar{z}$, then
  from (\ref{eq:relationOSC})
  \begin{align}
    \bar{z}\in\partial\continuationRegionN\subset\continuationAfterSwitchingRegionA{n,m}{\xi\xi^{(1)}\cdots{\xi}^{(m)}}&=\stoppingRegionA{n}{\xi\xi^{(1)}\cdots{\xi}^{(m)}} \cap \continuationRegionA{n-m}{{\xi}^{(m)}}. \label{in:bar z in}
  \end{align}
  from the definition (\ref{def:stoppingContinuousRegion_u}) and Lemma \ref{prop:test_funct_simultaneous}, for $m\geq 1$,  
  \begin{align}
    &\begin{cases}
       u(z)&>\varphi^{\xi^{(1)}}_{n, 1}(z)=\cdots =\varphi^{\xi^{(m)}}_{n, m}(z) \text{ on } \continuationRegionN,\\ 
       u(z)&=\varphi^{\xi^{(1)}}_{n, 1}(z)=\cdots =\varphi^{\xi^{(m)}}_{n, m}(z) \text{ on }\continuationAfterSwitchingRegionA{n,m}{\xi\xi^{(1)}\cdots{\xi}^{(m)}} \subset \continuationRegionA{n-m}{{\xi}^{(m)}} \backslash \continuationRegionN \ni \bar{z},
     \end{cases} \label{rel:u_z}
  \end{align}

  \begin{align}
    &\begin{cases}
       \dfrac{u(z)-u(\bar{z})}{(sgn)(z-\bar{z})}&>\dfrac{\varphi^{\xi^{(1)}}_{n, 1}(z)-\varphi^{\xi^{(1)}}_{n, 1}(\bar{z})}{(sgn)(z-\bar{z})}\text{ on } \continuationRegionN,\\ 
       \dfrac{u(z)-u(\bar{z})}{(sgn)(z-\bar{z})}&=\dfrac{\varphi^{\xi^{(1)}}_{n, 1}(z)-\varphi^{\xi^{(1)}}_{n, 1}(\bar{z})}{(sgn)(z-\bar{z})}\text{ on } \continuationAfterSwitchingRegionA{n,m}{\xi\xi^{(1)}\cdots{\xi}^{(m)}} \subset \continuationRegionA{n-m}{{\xi}^{(m)}} \backslash \continuationRegionN, 
     \end{cases} \label{rel:u_z2}
  \end{align}
  where $sgn$ is the  $\continuationRegionN$-side indicator from $\bar{z}$: 
  \begin{align}
    &sgn=
    \begin{cases}
      	1 & \cdots  [\bar{z}, +\infty )\cap \continuationRegionN\cap B_{\varepsilon}(\bar{z})\ne\phi, \forall \varepsilon>0,\\
      	-1 & \cdots  (-\infty, \bar{z}]\cap \continuationRegionN\cap B_{\varepsilon}(\bar{z})\ne\phi, \forall \varepsilon>0,
    \end{cases} \label{def:sgn}
  \end{align}

  At this stage related to the 1-st order derivative of $u$ at $z=\bar{z}$, there exist:
  \begin{align}
    \begin{cases}
      & \text{the left and right derivative of } u_0\in\continuousFunction{2}(\overline{\continuationRegion{n}})  \text{ on } \overline{\continuationRegion{n}},\\ 
      & \text{the }\continuationRegionN\text{-side (one-side) derivative } u_C'(\bar{z})\equiv u_0'(\bar{z}) \text{ of }  u(z),\\
      & \text{the left and right derivative of } \varphi^{\xi^{(m)}}_{n, m}\in\continuousFunction{2}(\continuationRegionA{n-m}{{\xi}^{(m)}}) \text{ on } \continuationRegionA{n-m}{{\xi}^{(m)}},\\
      & \text{the }\stoppingRegionA{n}{\xi\xi^{(1)}}\text{-side (one-side) derivative } u_S'(\bar{z})\equiv \varphi^{\xi^{(m)}}_{n, m}{'}(\bar{z}) \text{ of }  u(z),
    \end{cases} \label{rel:C2cond}
  \end{align}
  but we do not yet have the derivative $u'(\bar{z})$ of $u$ at $z=\bar{z}\in\continuationRegionN\cup\stoppingRegionA{n}{\xi\xi^{(1)}}$.
  Taking the one-side limit $z\to\bar{z}+(sgn)0$ of each of (\ref{rel:u_z2}), 
  \begin{align}
    (sgn)u_0'(\bar{z})=(sgn)u_C'(\bar{z})\geq(sgn)\varphi^{\xi^{(m)}}_{n, m}{'}(\bar{z})=(sgn)u_S'(\bar{z}).
  \end{align}
  We will consider only the case of $sgn=-1$. The other case will be similar. In this case
  \begin{align}
    u_C'(\bar{z})\leq u_S'(\bar{z}).
  \end{align}
  We will contradict the case 
  \begin{align}
    u_C'(\bar{z})<u_S'(\bar{z}). \label{ineq:contradiction_u'}
  \end{align}
  Assuming (\ref{ineq:contradiction_u'}) the semijets of $u$ at $\bar{z}$ on the neighborhood $B_\varepsilon(\bar{z})$ of $\bar{z}$ are as follows:
  \begin{align}
    \begin{cases}
      \subjets{u(\bar{z})}{B_\varepsilon(\bar{z})}=\{(u_C'(\bar{z}), u_S'(\bar{z}))\times\real{}\}\cup\{\{u_C'(\bar{z}), u_S'(\bar{z})\}\times(-\infty, 0]\},\\
\superjets{u(\bar{z})}{B_\varepsilon(\bar{z})}=\phi.
    \end{cases} \label{semijets_uz}
  \end{align}
  In order for $u$ to be a supersolution to  the variational inequality in (\ref{rel:u}) at $z=\bar{z}$, 
  for any $(p, +\infty)\in \subjets{u(\bar{z})}{B_\varepsilon(\bar{z})}$, where $p\in(u_C'(\bar{z}), u_S'(\bar{z}))$, the inequality $F(\bar{z}, u(\bar{z}), p, \infty)\geq 0$ should be satisfied, which yields contradiction, due to the nondegenerate ellipticity and properness of $F$.
  Therefore, 
  \begin{align}
    u_C'(\bar{z})=u_S'(\bar{z}), \label{eq:admit_derivative_u}
  \end{align}
  which means $u$ is of class $\continuousFunction{1}$ at $\bar{z}$.
\end{proof} 

We define the following reference set on $\real{}$ for $m$ simultaneous multiple-regime switches from $\xi$ to $\hat{\xi}$.
For $\xi, \hat{\xi}\in\positionspace, m\geq 1$,
\begin{align}
  \stoppingRegionCandidate{\xi\hat{\xi}}{m}\equiv\{z|f(z, \hat{\xi})-f(z, \xi)\geq m\delta K\}. \label{def:candidateRegion}
\end{align}

\begin{lemma}[Reference regions for switching: \citeN{suzuki2020optimal}] \label{prop:candidateRegion}~\\
If the sequence of the full consecutive simultaneous switchings: $\xi^{(0)}\mapsto\cdots\mapsto{\xi}^{(m)}$ for ${\xi}^{(p+1)}\in\controlspace(\xi^{(p)}), p=0, \cdots, m-1, \xi^{(0)}\in\positionspace, m\geq 1$, is the optimal switching strategy, where the last regime (position) ${\xi}^{(m)}$ is the continuation state at $z$, then 
\begin{align}
  \stoppingRegionCandidate{\xi^{(0)}\xi^{(m)}}{m}\supset \continuationAfterSwitchingRegionA{n,m}{\xi^{(0)}\cdots{\xi}^{(m)}} \label{rel:QO}
\end{align}
\end{lemma}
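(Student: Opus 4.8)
The plan is to fix an arbitrary point $\bar z\in\continuationAfterSwitchingRegionA{n,m}{\xi^{(0)}\cdots{\xi}^{(m)}}$ and show directly that $f(\bar z, {\xi}^{(m)})-f(\bar z, \xi^{(0)})\geq m\delta K$, which by (\ref{def:candidateRegion}) is exactly membership in $\stoppingRegionCandidate{\xi^{(0)}\xi^{(m)}}{m}$; since $\bar z$ is arbitrary this gives the claimed inclusion (\ref{rel:QO}). By (\ref{eq:relationOSC}) the hypothesis that the full chain $\xi^{(0)}\mapsto\cdots\mapsto{\xi}^{(m)}$ is optimal with ${\xi}^{(m)}$ the continuation state is encoded in two simultaneous memberships: $\bar z\in\stoppingRegionA{n}{\xi^{(0)}\cdots{\xi}^{(m)}}$ and $\bar z\in\continuationRegionA{n-m}{{\xi}^{(m)}}$. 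I would exploit each of these in turn, the first through a test-function argument and the second through the classical PDE.

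The engine of the argument is the super test function supplied by Lemma \ref{prop:test_funct_simultaneous}. Writing $\varphi\equiv\varphi^{\xi^{(m)}}_{n, m}=\valuefuncB{v}{z}{\xi^{(m)}}{n-m}-mK$, that lemma gives $\TestFuncMin{\varphi}{\bar z}{\valuefuncB{v}{z}{\xi^{(0)}}{n}}$, i.e.\ $\varphi\in\continuousFunction{2}(\continuationRegionA{n-m}{{\xi}^{(m)}})$ touches the value function $\valuefuncB{v}{z}{\xi^{(0)}}{n}$ from below at $\bar z$ with $\valuefuncB{v}{\bar z}{\xi^{(0)}}{n}=\varphi(\bar z)$. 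On the other hand, by Assumption \ref{theo:viscosityvn} together with (\ref{eq:VIsuper}), the value function $\valuefuncB{v}{z}{\xi^{(0)}}{n}$ is in particular a viscosity supersolution of the PDE slot $\delta V-\generator{V}-f(z,\xi^{(0)})=0$ on $\real{}$. Feeding the admissible lower test function $\varphi$ into the supersolution inequality at $\bar z$ — using that $\valuefuncB{v}{z}{\xi^{(0)}}{n}$ is continuous (Theorem \ref{theo:continuity}) so its l.s.c.\ envelope equals itself, and that its zero-order value coincides with $\varphi(\bar z)$ by the touching condition — yields $\delta\varphi(\bar z)-\generator{\varphi}(\bar z)-f(\bar z,\xi^{(0)})\geq 0$.

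It then remains to evaluate $\delta\varphi-\generator{\varphi}$ at $\bar z$. Because $\bar z\in\continuationRegionA{n-m}{{\xi}^{(m)}}$, Lemma \ref{theo:classicalPart} says $\valuefuncB{z}{\xi^{(m)}}{n-m}$ — more precisely $\valuefuncB{v}{z}{\xi^{(m)}}{n-m}$ — is a classical solution there, so $\delta\valuefuncB{v}{\bar z}{\xi^{(m)}}{n-m}-\generator{\valuefuncB{v}{\bar z}{\xi^{(m)}}{n-m}}=f(\bar z,{\xi}^{(m)})$. The key observation is the asymmetry between the two operators acting on the additive constant $mK$: the diffusion generator annihilates it, $\generator{(mK)}=0$, whereas the discount term does not, $\delta(mK)=m\delta K$. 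Hence $\delta\varphi(\bar z)-\generator{\varphi}(\bar z)=f(\bar z,{\xi}^{(m)})-m\delta K$, and substituting into the supersolution inequality gives $f(\bar z,{\xi}^{(m)})-m\delta K-f(\bar z,\xi^{(0)})\geq 0$, which is precisely $\bar z\in\stoppingRegionCandidate{\xi^{(0)}\xi^{(m)}}{m}$.

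The step I expect to be the main obstacle — and the reason one must route through Lemma \ref{prop:test_funct_simultaneous} rather than argue pointwise — is regularity. The set $\continuationAfterSwitchingRegionA{n,m}{\xi^{(0)}\cdots{\xi}^{(m)}}$ is the intersection of the \emph{closed} simultaneous switching region $\stoppingRegionA{n}{\xi^{(0)}\cdots{\xi}^{(m)}}$ with the \emph{open} continuation region $\continuationRegionA{n-m}{{\xi}^{(m)}}$, so it need not be open, and $\valuefuncB{v}{z}{\xi^{(0)}}{n}$ need not be twice differentiable at $\bar z$; one therefore cannot simply substitute the value function into the classical PDE and read off the inequality. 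What rescues the argument is that Lemma \ref{prop:test_funct_simultaneous} furnishes a genuine $\continuousFunction{2}$ lower test function at \emph{every} point of $\continuationAfterSwitchingRegionA{n,m}{\xi^{(0)}\cdots{\xi}^{(m)}}$, so the viscosity supersolution inequality remains available even at boundary points, and the entire computation is carried out on the smooth $\varphi$ in place of $v$.
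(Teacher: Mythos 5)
Your proof is correct and is essentially the intended argument: this paper imports the lemma from \citeN{suzuki2020optimal} without reproducing its proof, but the machinery it sets up exists precisely for this purpose — the special super test function of Lemma \ref{prop:test_funct_simultaneous}, the extraction of the supersolution property of the PDE part via (\ref{eq:VIsuper}), and the classical equation of Lemma \ref{theo:classicalPart} on $\continuationRegionA{n-m}{{\xi}^{(m)}}$ are exactly the three ingredients you combine, and the threshold $m\delta K$ arises from precisely your observation that the discount term charges the constant $mK$ while the generator annihilates it. The only loose end is the edge case $n-m=0$, where Lemma \ref{theo:classicalPart} (stated for $n\geq 1$) should be replaced by the remark that $v(z,\xi^{(m)},0)$ is a $\continuousFunction{2}$ viscosity solution of (\ref{eq:variationalInequality0}) (its regularity being part of Lemma \ref{prop:test_funct_simultaneous}), hence a classical one, so your evaluation of $\delta\varphi-\generator{\varphi}$ at $\bar{z}$ still goes through.
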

Note that the above proposition assert that the reference region $\stoppingRegionCandidate{\xi^{(0)}\xi^{(1)}}{1}$ should include only $\continuationAfterSwitchingRegionA{n,1}{\xi^{(0)}{\xi}^{(1)}}\subset\stoppingRegionA{n}{\xi^{(0)}\xi^{(1)}}$ and not necessarily include whole the set $\stoppingRegionA{n}{\xi^{(0)}\xi^{(1)}}$, as the optimal switching problems without simultaneous switchings. Therefore we introduce the following assumption.

\begin{assumption}[Structure of switching regions: \citeN{suzuki2020optimal}] \label{assumption:structureSwitchingRegion}~\\
(i) For each $\xi^{(0)}\in\positionspace$, the sets $\stoppingRegionCandidate{\xi^{(0)}\xi^{(1)}}{1},\quad {\xi}^{(1)}\in\controlspace(\xi^{(0)})$ are mutually disjoint.
As a special case, if the set $\controlspace(\xi^{(0)})$ is a singleton, this condition is immediately satisfied.\\
(ii) $\stoppingRegionCandidate{\xi^{(0)}\xi^{(1)}}{1} \supset \stoppingRegionA{n}{\xi^{(0)}\xi^{(1)}}$ for $\forall {\xi}^{(1)}\in\controlspace(\xi^{(0)}), \forall \xi^{(0)}\in\positionspace$.
\end{assumption}
Under the above assumption we can make each set $\stoppingRegionA{n}{\xi\hat{\xi}}\subset\stoppingRegionCandidate{\xi\hat{\xi}}{1}$ for $\xi\in\positionspace$ mutually disjoint.  Therefor the isolated or crossing boundary intersection points in $\stoppingRegionA{n}{\xi\hat{\xi}} \cap \stoppingRegionA{n}{\xi\bar{\xi}}, \hat{\xi}\ne\bar{\xi},\; \hat{\xi},\bar{\xi}\in\controlspace(\xi)$ do not exist, i.e., for each $\xi$,  
\begin{align} 
  \stoppingRegionA{n}{\xi\hat{\xi}} \cap \stoppingRegionA{n}{\xi\bar{\xi}}=\phi, \hat{\xi}\ne\bar{\xi},\; \hat{\xi},\bar{\xi}\in\controlspace(\xi). \label{eq:empty_intersectSS}
\end{align}

Therefore $\stoppingRegionA{n}{\xi} =\bigoplus_{\hat{\xi}\in\controlspace(\xi)} \stoppingRegionA{n}{\xi\hat{\xi}}$, where $'\oplus'$ operator operates as direct sum (disjoint union).
In this case for each $\xi\in\positionspace$,
\begin{align} 
  \real{}=\bigg(\bigoplus_{\hat{\xi}\in\controlspace(\xi)} \stoppingRegionA{n}{\xi\hat{\xi}}\bigg)\oplus\continuationRegionA{n}{\xi},  \label{eq:decompR}
\end{align}

\begin{corollary}[Decomposition of $\real{}$ into the regions: \citeN{suzuki2020optimal}] \label{coro:decompR}~\\
Under Assumption \ref{assumption:structureSwitchingRegion}, the following decomposition of $\real{}$ holds for each $\xi^{(0)}\in\positionspace$ according to (\ref{eq:decompR}).
\begin{align} 
  \real{}=\bigg(\bigoplus_{{\xi}^{(1)}\in\controlspace(\xi^{(0)})} \stoppingRegionCandidate{\xi^{(0)}\xi^{(1)}}{1}\bigg)\cup\continuationRegionA{n}{{\xi}^{(0)}}, \label{eq:unionSetsR}
\end{align}
\end{corollary}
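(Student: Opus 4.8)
The plan is to derive the claimed decomposition (\ref{eq:unionSetsR}) directly from the exact partition (\ref{eq:decompR}), which already holds under Assumption \ref{assumption:structureSwitchingRegion} by virtue of (\ref{eq:empty_intersectSS}), simply by enlarging each true switching region $\stoppingRegionA{n}{\xi^{(0)}\xi^{(1)}}$ to the corresponding reference region $\stoppingRegionCandidate{\xi^{(0)}\xi^{(1)}}{1}$ and checking that the covering property survives this enlargement. Fix $\xi^{(0)}\in\positionspace$. By (\ref{eq:decompR}) we have the disjoint partition
\begin{align}
  \real{}=\bigg(\bigoplus_{{\xi}^{(1)}\in\controlspace(\xi^{(0)})} \stoppingRegionA{n}{\xi^{(0)}\xi^{(1)}}\bigg)\oplus\continuationRegionA{n}{{\xi}^{(0)}}.
\end{align}

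The argument would proceed by the two set inclusions. The easy direction is immediate: the right-hand side of (\ref{eq:unionSetsR}) is a union of subsets of $\real{}$, hence contained in $\real{}$. For the reverse inclusion, take any $z\in\real{}$. By the partition above, $z$ lies either in $\continuationRegionA{n}{{\xi}^{(0)}}$, in which case it already belongs to the right-hand side of (\ref{eq:unionSetsR}); or in some $\stoppingRegionA{n}{\xi^{(0)}\xi^{(1)}}$ with ${\xi}^{(1)}\in\controlspace(\xi^{(0)})$, in which case Assumption \ref{assumption:structureSwitchingRegion}(ii) gives $\stoppingRegionA{n}{\xi^{(0)}\xi^{(1)}}\subset\stoppingRegionCandidate{\xi^{(0)}\xi^{(1)}}{1}$, so that $z$ again belongs to the right-hand side. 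This establishes $\real{}\subset$ (\ref{eq:unionSetsR}), and hence set equality.

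The point that requires care, and which I expect to be the crux of the write-up, is the justification of the two operator symbols in (\ref{eq:unionSetsR}): the direct sum $\bigoplus$ over ${\xi}^{(1)}\in\controlspace(\xi^{(0)})$ but only an ordinary union $\cup$ with $\continuationRegionA{n}{{\xi}^{(0)}}$. The direct sum is legitimate precisely because Assumption \ref{assumption:structureSwitchingRegion}(i) asserts that the reference regions $\stoppingRegionCandidate{\xi^{(0)}\xi^{(1)}}{1}$, ${\xi}^{(1)}\in\controlspace(\xi^{(0)})$, are mutually disjoint. The subtlety is that the last combination must be weakened to a union: since $\stoppingRegionCandidate{\xi^{(0)}\xi^{(1)}}{1}$ is defined purely through the running reward by (\ref{def:candidateRegion}), it may intersect the continuation region $\continuationRegionA{n}{{\xi}^{(0)}}$, unlike the true switching region $\stoppingRegionA{n}{\xi^{(0)}\xi^{(1)}}$, which is disjoint from $\continuationRegionA{n}{{\xi}^{(0)}}$ by construction in (\ref{eq:decompR}). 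Consequently, the enlargement from $\stoppingRegionA{n}{\xi^{(0)}\xi^{(1)}}$ to $\stoppingRegionCandidate{\xi^{(0)}\xi^{(1)}}{1}$ can only create overlap with $\continuationRegionA{n}{{\xi}^{(0)}}$, never among the reference regions themselves, which is exactly why $\oplus$ is retained inside the parentheses but downgraded to $\cup$ outside, completing the proof.
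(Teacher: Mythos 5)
Your proposal is correct and takes essentially the same route as the paper, which states this corollary without a separate proof as an immediate consequence of the partition (\ref{eq:decompR}) combined with Assumption \ref{assumption:structureSwitchingRegion}: part (ii) enlarges each true switching region $\stoppingRegionA{n}{\xi^{(0)}\xi^{(1)}}$ into its reference region $\stoppingRegionCandidate{\xi^{(0)}\xi^{(1)}}{1}$ so the covering of $\real{}$ survives, while part (i) keeps the reference regions mutually disjoint, justifying the inner $\oplus$. Your observation that the outer operator must be weakened to $\cup$ (since a reference region, defined only through the running reward in (\ref{def:candidateRegion}), may intersect $\continuationRegionA{n}{{\xi}^{(0)}}$) is exactly the reason the paper's statement uses $\cup$ rather than $\oplus$ there.
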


\begin{omitlevel1}
Considering the problems involving the  simultaneous switching, to verify Assumption \ref{assumption:structureSwitchingRegion} (ii) might be difficult in general.
We provide a sufficient condition for Assumption \ref{assumption:structureSwitchingRegion} (ii) in the following.
\begin{proposition}[A sufficient condition for Assumption \ref{assumption:structureSwitchingRegion} (ii) : \citeN{suzuki2020optimal}] \label{prop:sufficientCond}~\\
  If for each  ${\xi_1}\in\controlspace(\xi_0), \xi_0\in\positionspace$,
  \begin{align} 
    \stoppingRegionCandidate{\xi_0\xi_1}{1}\supset  \bigcup_{\bigg\{\begin{subarray}{l}  {\xi}_m\in\controlspace(\xi_{m-1})\\ \xi_{m-1}\in\positionspace \\ \xi_{m-1}\ne\xi_{0} \\ \xi_{m}\ne\xi_{0}, \xi_{1}   \end{subarray}\bigg\}} \stoppingRegionCandidate{\xi_{m-1}\xi_{m}}{1},
  \end{align}
  then Assumption \ref{assumption:structureSwitchingRegion} (ii) is satisfied.
\end{proposition}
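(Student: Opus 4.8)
The goal is to verify the inclusion $\stoppingRegionA{n}{\xi_0\xi_1}\subset\stoppingRegionCandidate{\xi_0\xi_1}{1}$ that constitutes Assumption \ref{assumption:structureSwitchingRegion} (ii), for each fixed $\xi_1\in\controlspace(\xi_0)$ and $\xi_0\in\positionspace$. The plan is to argue pointwise. I would take an arbitrary $z\in\stoppingRegionA{n}{\xi_0\xi_1}$ and locate the maximal chain of optimal consecutive switchings starting with $\xi_0\mapsto\xi_1$. By (\ref{def:stoppingRegioneach}), $z$ lies in $\stoppingRegionA{n}{\xi_0\xi_1}$ precisely when switching $\xi_0\mapsto\xi_1$ is optimal there; after that switch $z$ is either in the continuation region $\continuationRegionA{n-1}{\xi_1}$ or again in a switching region, and since $\stoppingRegionA{0}{\cdot}=\phi$ and $\continuationRegionA{0}{\cdot}=\real{}$, iterating terminates after at most $n$ steps in a finite nonrecurrent sequence $\xi_0\mapsto\xi_1\mapsto\cdots\mapsto\xi^{(m)}$ ($m\geq 1$) with $z\in\continuationRegionA{n-m}{\xi^{(m)}}$ and $z\in\stoppingRegionA{n-q}{\xi^{(q)}\xi^{(q+1)}}$ for $q=0,\cdots,m-1$. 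By the definitions (\ref{eq:simulSwitch}) and (\ref{eq:relationOSC}) this is exactly $z\in\continuationAfterSwitchingRegionA{n,m}{\xi_0\xi_1\cdots\xi^{(m)}}$. Thus it suffices to show each elementary stable region $\continuationAfterSwitchingRegionA{n,m}{\xi_0\xi_1\cdots\xi^{(m)}}$ is contained in $\stoppingRegionCandidate{\xi_0\xi_1}{1}$.

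For the base case $m=1$, the containment $\continuationAfterSwitchingRegionA{n,1}{\xi_0\xi_1}\subset\stoppingRegionCandidate{\xi_0\xi_1}{1}$ is immediate from Lemma \ref{prop:candidateRegion} applied to a single switch. For $m\geq 2$ I would isolate the \emph{last} switch $\xi^{(m-1)}\mapsto\xi^{(m)}$ of the chain. Reading off the $q=m-1$ factor of the intersection (\ref{eq:simulSwitch}) together with $z\in\continuationRegionA{n-m}{\xi^{(m)}}$ gives $z\in\stoppingRegionA{n-m+1}{\xi^{(m-1)}\xi^{(m)}}\cap\continuationRegionA{n-m}{\xi^{(m)}}=\continuationAfterSwitchingRegionA{n-m+1,1}{\xi^{(m-1)}\xi^{(m)}}$ by (\ref{eq:relationOSC}), so that the single switch $\xi^{(m-1)}\mapsto\xi^{(m)}$ is optimal at $z$ with $\xi^{(m)}$ the continuation state. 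Lemma \ref{prop:candidateRegion} (with a single switch) then yields $z\in\stoppingRegionCandidate{\xi^{(m-1)}\xi^{(m)}}{1}$. It remains to feed this into the hypothesis: I would check that the pair $(\xi^{(m-1)},\xi^{(m)})$ satisfies the index constraints of the union, namely $\xi^{(m-1)}\ne\xi_0$, $\xi^{(m)}\ne\xi_0$, and $\xi^{(m)}\ne\xi_1$. Because $m\geq 2$ and the chain is nonrecurrent, the regimes $\xi^{(0)},\cdots,\xi^{(m)}$ are pairwise distinct, so all three constraints hold (note that when $m=2$ the source of the last switch is $\xi^{(1)}=\xi_1$, which satisfies $\xi_1\ne\xi_0$ since $\xi_1\in\controlspace(\xi_0)$ and $\xi_0\notin\controlspace(\xi_0)$). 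The hypothesis then gives $\stoppingRegionCandidate{\xi_0\xi_1}{1}\supset\stoppingRegionCandidate{\xi^{(m-1)}\xi^{(m)}}{1}\ni z$, and ranging over all $m$ and all admissible chains completes the inclusion.

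The main obstacle I anticipate is not any analytic estimate but the combinatorial bookkeeping around nonrecurrence and the precise index set of the union. The whole argument hinges on the last switch $\xi^{(m-1)}\mapsto\xi^{(m)}$ having a source distinct from $\xi_0$ and a target distinct from both $\xi_0$ and $\xi_1$; this is exactly what the conditions $\xi_{m-1}\ne\xi_0$ and $\xi_m\ne\xi_0,\xi_1$ in the hypothesis encode, and it is the single place where the nonrecurrence of the optimal consecutive simultaneous switching chain must be invoked. I would therefore state explicitly, as the one structural fact the proof leans on, that such a chain visits pairwise distinct regimes, after which the reduction to Lemma \ref{prop:candidateRegion} via the last switch is routine.
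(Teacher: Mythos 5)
The first thing to note is that this paper contains no proof of this proposition for you to be compared against: it is stated as imported from \citeN{suzuki2020optimal}, so your proposal can only be checked against the definitions and lemmas the paper does supply. Checked that way, your argument is correct. Your pointwise decomposition of $\stoppingRegionA{n}{\xi_0\xi_1}$ into stable regions $\continuationAfterSwitchingRegionA{n,m}{\xi_0\xi_1\cdots\xi^{(m)}}$ is precisely the paper's structural identity for switching regions from Section~\ref{sec:struct_switch}, and your termination argument via $\stoppingRegionA{0}{\xi\hat{\xi}}=\phi$, $\continuationRegion{0}=\real{}$ is sound. The case $m=1$ is indeed immediate from Lemma~\ref{prop:candidateRegion}. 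For $m\geq 2$ the truncation to the last switch works exactly as you say: retaining only the $q=m-1$ factor of (\ref{eq:simulSwitch}) and intersecting with $\continuationRegionA{n-m}{\xi^{(m)}}$ gives, by (\ref{eq:relationOSC}) applied with parameters $(n-m+1,1)$, that $z\in\continuationAfterSwitchingRegionA{n-m+1,1}{\xi^{(m-1)}\xi^{(m)}}$, whence Lemma~\ref{prop:candidateRegion} in its single-switch instance yields $z\in\stoppingRegionCandidate{\xi^{(m-1)}\xi^{(m)}}{1}$; and the index constraints of the hypothesis (source $\ne\xi_0$, target $\ne\xi_0,\xi_1$) are exactly what pairwise distinctness of $\xi^{(0)},\ldots,\xi^{(m)}$ delivers when $m\geq 2$, including your correctly handled boundary case $m=2$ where the source is $\xi_1$.

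Two refinements. First, the pairwise distinctness that you flag as the one unproved structural fact need not be postulated: it is built into the paper's framework, which only defines these regions for nonrecurrent chains, and if you want it from first principles it follows from monotonicity of $v(z,\xi,n)$ in $n$ together with $K>0$ — a revisit $\xi^{(q)}=\xi^{(p)}$ with $q>p$ would, by chaining the equalities defining (\ref{def:stoppingRegioneach}) along the switches, force $v(z,\xi^{(p)},n-p)=v(z,\xi^{(p)},n-q)-(q-p)K\leq v(z,\xi^{(p)},n-p)-(q-p)K<v(z,\xi^{(p)},n-p)$, a contradiction. Second, your route uses only the $m=1$ instance of Lemma~\ref{prop:candidateRegion}; an equally short alternative, and the apparent reason that lemma carries the parameter $m$ at all, is to invoke its full-chain form $z\in\stoppingRegionCandidate{\xi_0\xi^{(m)}}{m}$, i.e.\ $f(z,\xi^{(m)})-f(z,\xi_0)\geq m\delta K$, and telescope this difference over consecutive pairs: some $q$ must satisfy $f(z,\xi^{(q+1)})-f(z,\xi^{(q)})\geq\delta K$, and that pair is either $(\xi_0,\xi_1)$ itself or one covered by the hypothesized union. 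Both reductions reach Assumption~\ref{assumption:structureSwitchingRegion}~(ii) under exactly the same index constraints; yours is the more economical.
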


For each  ${\xi_{1}}\in\controlspace(\xi_{0}), \xi_{0}\in\positionspace$, the actual set 
$\stoppingRegionCandidate{\xi_{0}\xi_{1}}{1}$ might consist more than two regions. In this case it is further decomposed into mutually disjoint separated single regions as follows:
\begin{align} 
  \stoppingRegionCandidate{\xi_{0}\xi_{1}}{1}=\bigoplus_{\alpha_i\in\Lambda}  \stoppingRegionCandidate{\xi_{0}\xi_{1}\{\alpha_i\}}{1},
\end{align}
where each $\stoppingRegionCandidate{\xi_{0}\xi_{1}\{\alpha_i\}}{1}, \alpha_i\in\Lambda$ is a single connected region.
So we group those series of regions by the index $\{\alpha_i\}$ as follows: for the optimal series of simultaneous switches ${\xi^{(0)}\cdots{\xi}^{(m)}}$ and for each $\alpha_i\in\Lambda$,
\begin{align} 
 \stoppingRegionA{n}{\xi^{(0)}\xi^{(1)}\{\alpha_i\}}\equiv \stoppingRegionA{n}{\xi^{(0)}\xi^{(1)}}\cap  \stoppingRegionCandidate{\xi^{(0)}\xi^{(1)}\{\alpha_i\}}{1} \label{def:Salpha} \\
 \continuationAfterSwitchingRegionA{n,m}{\xi^{(0)}\cdots{\xi}^{(m)}\{\alpha_i\}}\equiv\continuationAfterSwitchingRegionA{n,m}{\xi^{(0)}\cdots{\xi}^{(m)}} \cap  \stoppingRegionCandidate{\xi^{(0)}\xi^{(1)}\{\alpha_i\}}{1}.
\end{align} 
Taking the union of the both sides of the equation (\ref{def:Salpha}) over the index $\{\alpha_i\}$, 
\begin{align} 
 \bigoplus_{\alpha_i\in\Lambda}\stoppingRegionA{n}{\xi^{(0)}\xi^{(1)}\{\alpha_i\}}&=\stoppingRegionA{n}{\xi^{(0)}\xi^{(1)}}\cap  \big(\bigoplus_{\alpha_i\in\Lambda}\stoppingRegionCandidate{\xi^{(0)}\xi^{(1)}\{\alpha_i\}}{1}\big)\\
 &=\stoppingRegionA{n}{\xi^{(0)}\xi^{(1)}}\cap  \stoppingRegionCandidate{\xi^{(0)}\xi^{(1)}}{1}.
\end{align}
Therefore under Assumption \ref{assumption:structureSwitchingRegion} (ii), the following corollary holds.
\begin{corollary}[Separated switchings regions: \citeN{suzuki2020optimal}] \label{prop:separatedSwitchingRegions}~\\
Under Assumption \ref{assumption:structureSwitchingRegion} (ii),
\begin{align} 
 \bigoplus_{\alpha_i\in\Lambda}\stoppingRegionA{n}{\xi^{(0)}\xi^{(1)}\{\alpha_i\}}&=\stoppingRegionA{n}{\xi^{(0)}\xi^{(1)}} \label{eq:separatedSwitchingRegions}
\end{align}
\end{corollary}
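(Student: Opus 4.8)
The plan is to reduce the claim to elementary set algebra, relying on two ingredients: the distributivity of intersection over a disjoint union, and the inclusion supplied by Assumption~\ref{assumption:structureSwitchingRegion}~(ii). No analytic input is needed beyond what has already been set up, so the argument is essentially the short computation displayed just before the statement, organised into its logical steps.

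First I would recall the defining relation (\ref{def:Salpha}), $\stoppingRegionA{n}{\xi^{(0)}\xi^{(1)}\{\alpha_i\}}=\stoppingRegionA{n}{\xi^{(0)}\xi^{(1)}}\cap\stoppingRegionCandidate{\xi^{(0)}\xi^{(1)}\{\alpha_i\}}{1}$, in which each $\stoppingRegionCandidate{\xi^{(0)}\xi^{(1)}\{\alpha_i\}}{1}$, $\alpha_i\in\Lambda$, is a single connected component of $\stoppingRegionCandidate{\xi^{(0)}\xi^{(1)}}{1}$. Since distinct connected components are pairwise disjoint, intersecting each with the single fixed set $\stoppingRegionA{n}{\xi^{(0)}\xi^{(1)}}$ yields pairwise disjoint sets $\stoppingRegionA{n}{\xi^{(0)}\xi^{(1)}\{\alpha_i\}}$; this justifies writing their union as the direct sum $\bigoplus_{\alpha_i\in\Lambda}$ that appears on the left-hand side of (\ref{eq:separatedSwitchingRegions}).

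Next I would take the union over $\alpha_i\in\Lambda$ of both sides of (\ref{def:Salpha}) and, by distributivity, pull the fixed factor $\stoppingRegionA{n}{\xi^{(0)}\xi^{(1)}}$ outside the union. Using the connected-component decomposition $\bigoplus_{\alpha_i\in\Lambda}\stoppingRegionCandidate{\xi^{(0)}\xi^{(1)}\{\alpha_i\}}{1}=\stoppingRegionCandidate{\xi^{(0)}\xi^{(1)}}{1}$, this gives $\bigoplus_{\alpha_i\in\Lambda}\stoppingRegionA{n}{\xi^{(0)}\xi^{(1)}\{\alpha_i\}}=\stoppingRegionA{n}{\xi^{(0)}\xi^{(1)}}\cap\stoppingRegionCandidate{\xi^{(0)}\xi^{(1)}}{1}$.

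Finally, I would invoke Assumption~\ref{assumption:structureSwitchingRegion}~(ii), which asserts $\stoppingRegionCandidate{\xi^{(0)}\xi^{(1)}}{1}\supset\stoppingRegionA{n}{\xi^{(0)}\xi^{(1)}}$; the intersection on the right therefore collapses to $\stoppingRegionA{n}{\xi^{(0)}\xi^{(1)}}$, which is exactly (\ref{eq:separatedSwitchingRegions}). The content here is purely set-theoretic, so there is no genuine obstacle; the only two points deserving a moment's care are verifying that the direct-sum (disjointness) structure is inherited from the components $\stoppingRegionCandidate{\xi^{(0)}\xi^{(1)}\{\alpha_i\}}{1}$ by the intersected pieces, and recognising that Assumption~(ii) is precisely the hypothesis that eliminates the superfluous factor $\stoppingRegionCandidate{\xi^{(0)}\xi^{(1)}}{1}$ and upgrades the containment to an equality.
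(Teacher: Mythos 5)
Your proposal is correct and takes essentially the same route as the paper: the paper's own argument also takes the union of (\ref{def:Salpha}) over $\alpha_i\in\Lambda$, pulls the fixed factor $\stoppingRegionA{n}{\xi^{(0)}\xi^{(1)}}$ out by distributivity to obtain $\stoppingRegionA{n}{\xi^{(0)}\xi^{(1)}}\cap\stoppingRegionCandidate{\xi^{(0)}\xi^{(1)}}{1}$, and then uses Assumption~\ref{assumption:structureSwitchingRegion}~(ii) to collapse this intersection to $\stoppingRegionA{n}{\xi^{(0)}\xi^{(1)}}$. Your explicit check that the intersected pieces inherit pairwise disjointness from the components $\stoppingRegionCandidate{\xi^{(0)}\xi^{(1)}\{\alpha_i\}}{1}$ (justifying the $\bigoplus$ notation) is a point the paper leaves implicit, but it is the same argument.
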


\begin{proposition}[Connected region in the reference region: \citeN{suzuki2020optimal}] \label{prop:connectedSwitchingRegion}~\\
  Defining the connected region as $\overline{\stoppingRegionA{n}{\xi^{(0)}\xi^{(1)}\{\alpha_i\}}} \equiv [\inf {\stoppingRegionA{n}{\xi^{(0)}\xi^{(1)}\{\alpha_i\}}}, \sup {\stoppingRegionA{n}{\xi^{(0)}\xi^{(1)}\{\alpha_i\}}} ]$, the following relation holds: 
\begin{align}
  \overline{\stoppingRegionA{n}{\xi^{(0)}\xi^{(1)}\{\alpha_i\}}}   \subset \stoppingRegionCandidate{\xi^{(0)}\xi^{(1)}\{\alpha_i\}}{1}.
\end{align} 
\end{proposition}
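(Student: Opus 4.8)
The plan is to reduce the inclusion to the elementary observation that a connected subset of $\real{}$ is an interval, once the reference component is known to be \emph{closed}. Write for brevity $S\equiv\stoppingRegionA{n}{\xi^{(0)}\xi^{(1)}\{\alpha_i\}}$ and $Q\equiv\stoppingRegionCandidate{\xi^{(0)}\xi^{(1)}\{\alpha_i\}}{1}$; if $S=\phi$ the claim is vacuous (the hull $\overline{S}$ is then empty), so I would assume $S\neq\phi$.

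First I would unwind the definition (\ref{def:candidateRegion}): $\stoppingRegionCandidate{\xi^{(0)}\xi^{(1)}}{1}=\{z\in\real{}\mid f(z,\xi^{(1)})-f(z,\xi^{(0)})\geq\delta K\}$. Because the running reward $f(\cdot,\xi)$ is continuous, this set is the preimage of a closed half-line under a continuous map, hence closed in $\real{}$. Consequently each of its connected components is a closed connected subset of $\real{}$, that is, a closed interval; in particular $Q$ is such an interval, which I would write as $Q=[a,b]$ with the convention $a=-\infty$ or $b=+\infty$ in the unbounded cases. Next, the definition (\ref{def:Salpha}) gives $S=\stoppingRegionA{n}{\xi^{(0)}\xi^{(1)}}\cap Q\subset Q=[a,b]$. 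Every point of $S$ therefore lies in $[a,b]$, so $\inf S\geq a$ and $\sup S\leq b$; by the definition of $\overline{\,\cdot\,}$ as $[\inf,\sup]$ this yields $\overline{S}=[\inf S,\sup S]\subset[a,b]=Q$, which is the assertion.

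The step I would flag as the genuine content --- everything else being bookkeeping --- is the passage from ``$S\subset Q$'' to ``$\overline{S}\subset Q$'', which is false for a general subset $Q$ and hinges on $Q$ being an \emph{order-convex closed} set. Connectedness supplies order-convexity, so that every point strictly between two points of $Q$ again lies in $Q$, while closedness is exactly what secures the two endpoints: without it, a sequence in $S$ could accumulate at an endpoint excluded from $Q$, and the hull $\overline{S}$ would protrude beyond $Q$. It is the non-strict inequality ``$\geq\delta K$'' in (\ref{def:candidateRegion}), combined with the continuity of $f$, that furnishes this closedness, so that no extra compactness or boundedness hypothesis on $S$ is required.
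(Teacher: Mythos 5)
Your proof is correct. The paper itself states this proposition without proof (it is imported from \citeN{suzuki2020optimal}), but your argument is the natural one and surely the intended one: by (\ref{def:candidateRegion}) and continuity of $f(\cdot,\xi)$ the set $\stoppingRegionCandidate{\xi^{(0)}\xi^{(1)}}{1}$ is closed, its ``mutually disjoint separated single connected regions'' are precisely its connected components and hence closed intervals, and by (\ref{def:Salpha}) the switching piece sits inside one such interval, so its interval hull cannot protrude; you also correctly identify closedness (the non-strict inequality in (\ref{def:candidateRegion}) plus separatedness of the decomposition) as the one step where the inclusion could fail. The single caveat is that continuity of $f(\cdot,\xi)$ is nowhere stated as an explicit assumption in this paper --- it holds for the quadratic reward (\ref{def:f}) of the example and is implicit in treating $\delta V-\generator{V}-f=0$ as a PDE with continuous data, but your proof does rely on it, so it deserves to be flagged as a standing hypothesis rather than asserted silently.
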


We introduce another HJB-variational inequality, which is slightly different from (\ref{eq:variationalInequality}):
\begin{align}
  \min \bigl\{ \delta V -\generator{V}-f(z, \xi), V  -\{\valuefuncB{v}{z}{\hat{\xi}}{n-1}-K\} \bigr\}=0 \text{ on } \real{},  \label{eq:variationalInequality2}
\end{align}
for any $\hat{\xi}\in\controlspace(\xi), \xi\in\positionspace$, and $\valuefuncB{v}{z}{\hat{\xi}}{n-1}, 1\leq n\leq +\infty$.

\begin{proposition}[Another HJB-variational inequality: \citeN{suzuki2020optimal}] \label{prop:anotherVI}~\\
  Under Assumption \ref{assumption:structureSwitchingRegion}, 
  for any ${\xi^{(1)}}\in\controlspace(\xi^{(0)}), \xi^{(0)}\in\positionspace$, and given $\varphi^{\xi^{(1)}}_{n, 1}(z)=\valuefuncB{v}{z}{\xi^{(1)}}{n-1},\; n\geq 1$,  
  \begin{align}
    \valuefuncB{v}{z}{\xi^{(0)}}{n}&\in\setViscositySolution{ \min \bigl\{ \delta V -\generator{V}-f(z, \xi^{(0)}), V  - \varphi^{\xi^{(1)}}_{n, 1}(z) \bigr\}=0} \text{ on } \overline{\stoppingRegionA{n}{\xi^{(0)}\xi^{(1)}\{\alpha_i\}}},  \label{eq:variationalInequality3}
  \end{align} 
for each     $\alpha_i\in\Lambda$.
\end{proposition}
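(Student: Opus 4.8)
The plan is to leverage that $\valuefuncB{v}{z}{\xi^{(0)}}{n}$ is already the unique viscosity solution of the \emph{full} inequality (\ref{eq:variationalInequality}) on all of $\real{}$ (Assumption~\ref{theo:viscosityvn}, Theorem~\ref{theo:continuity}), and to show that, \emph{restricted to} the closed region $\overline{\stoppingRegionA{n}{\xi^{(0)}\xi^{(1)}\{\alpha_i\}}}$, its obstacle $\max_{\hat{\xi}\in\controlspace(\xi^{(0)})}\varphi^{\hat{\xi}}_{n, 1}$ may be downgraded to the single branch $\varphi^{\xi^{(1)}}_{n, 1}$ without changing the solution set. Abbreviating the diffusion part by $F:=\delta V-\generator{V}-f(z,\xi^{(0)})$ and using the pointwise bound $\max_{\hat{\xi}}\varphi^{\hat{\xi}}_{n, 1}\ge\varphi^{\xi^{(1)}}_{n, 1}$, I would establish the supersolution and subsolution halves separately and recombine them, a viscosity solution being exactly a function that is simultaneously a super- and a subsolution, to obtain (\ref{eq:variationalInequality3}). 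Continuity of $v$ (Theorem~\ref{theo:continuity}) lets me identify $v$ with its semicontinuous envelopes throughout.

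The supersolution half is for free. By (\ref{eq:VIsuper}) of Lemma~\ref{prop:VI_supersolution}, a viscosity supersolution of $\min\{F,\,V-\max_{\hat{\xi}}\varphi^{\hat{\xi}}_{n,1}\}=0$ is simultaneously a supersolution of $F=0$ and of the non-differential constraint, the latter being the pointwise inequality $v\ge\max_{\hat{\xi}}\varphi^{\hat{\xi}}_{n,1}\ge\varphi^{\xi^{(1)}}_{n,1}$. Hence $v$ is also a supersolution of $V-\varphi^{\xi^{(1)}}_{n,1}=0$, and a second application of (\ref{eq:VIsuper}) on the region reassembles a supersolution of $\min\{F,\,V-\varphi^{\xi^{(1)}}_{n,1}\}=0$.

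The subsolution half carries the substance and is where Assumption~\ref{assumption:structureSwitchingRegion} enters. Fix $\bar z\in\overline{\stoppingRegionA{n}{\xi^{(0)}\xi^{(1)}\{\alpha_i\}}}$ and a $C^2$ test function touching $v$ from above at $\bar z$; since the region is closed, this touching takes place on an open neighbourhood of it, hence is a genuine local maximum in $\real{}$, so the full-VI subsolution property at $\bar z$ applies to the same jet. If the PDE part $F$ on that jet is $\le 0$, the defining minimum of the restricted inequality is already $\le0$. Otherwise $F>0$, and the full-VI subsolution inequality forces $v(\bar z)\le\max_{\hat{\xi}}\varphi^{\hat{\xi}}_{n,1}(\bar z)$; combined with the supersolution bound this yields equality, so $\bar z$ lies in $\stoppingRegionA{n}{\xi^{(0)}}=\bigoplus_{\hat{\xi}\in\controlspace(\xi^{(0)})}\stoppingRegionA{n}{\xi^{(0)}\hat{\xi}}$, the sum being disjoint by (\ref{eq:empty_intersectSS}). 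The decisive step is to pin down which branch contains $\bar z$: by Proposition~\ref{prop:connectedSwitchingRegion} one has $\bar z\in\overline{\stoppingRegionA{n}{\xi^{(0)}\xi^{(1)}\{\alpha_i\}}}\subset\stoppingRegionCandidate{\xi^{(0)}\xi^{(1)}\{\alpha_i\}}{1}\subset\stoppingRegionCandidate{\xi^{(0)}\xi^{(1)}}{1}$, while the mutual disjointness of the reference sets (Assumption~\ref{assumption:structureSwitchingRegion}(i)) together with $\stoppingRegionA{n}{\xi^{(0)}\bar{\xi}}\subset\stoppingRegionCandidate{\xi^{(0)}\bar{\xi}}{1}$ (Assumption~\ref{assumption:structureSwitchingRegion}(ii)) rules out $\bar z\in\stoppingRegionA{n}{\xi^{(0)}\bar{\xi}}$ for every $\bar{\xi}\ne\xi^{(1)}$. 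Thus $\bar z\in\stoppingRegionA{n}{\xi^{(0)}\xi^{(1)}}$, where $v(\bar z)=\varphi^{\xi^{(1)}}_{n,1}(\bar z)$, so the obstacle term $v(\bar z)-\varphi^{\xi^{(1)}}_{n,1}(\bar z)$ vanishes and the minimum is again $\le0$.

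The main obstacle is precisely this branch-identification in the case $F>0$: everything hinges on the fact that, under Assumption~\ref{assumption:structureSwitchingRegion}, a point of $\overline{\stoppingRegionA{n}{\xi^{(0)}\xi^{(1)}\{\alpha_i\}}}$ that is genuinely a switching point can only switch to $\xi^{(1)}$, so that the full maximum obstacle and the single obstacle coincide there; at the remaining (continuation) points the PDE part already handles the subsolution inequality, and no pointwise agreement of the two obstacles is needed. I would finally note that the only delicate technical check is the legitimacy of restricting the super- and subsolution properties of $v$ from $\real{}$ down to the closed region, which is precisely what the use of an open neighbourhood of the region in the test-function formulation secures at its boundary points.
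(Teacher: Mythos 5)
Your argument is sound, but note that this paper never actually proves Proposition \ref{prop:anotherVI}: it is quoted from \citeN{suzuki2020optimal} with no proof supplied, so your proposal can only be checked against the toolkit the paper provides, not against a written-out proof. Measured that way it holds up, and it is properly non-circular: you invoke only Assumption \ref{theo:viscosityvn} together with Theorem \ref{theo:continuity} (so $v=v^*=v_*$ and the envelopes can be dropped), Lemma \ref{prop:VI_supersolution}, the disjointness (\ref{eq:empty_intersectSS}) giving $\stoppingRegionA{n}{\xi^{(0)}}=\bigoplus_{\hat{\xi}\in\controlspace(\xi^{(0)})}\stoppingRegionA{n}{\xi^{(0)}\hat{\xi}}$, Proposition \ref{prop:connectedSwitchingRegion}, and Assumption \ref{assumption:structureSwitchingRegion}, all of which precede the statement, while avoiding Theorem \ref{prop:connectedRegion}, which follows it and is evidently what this proposition is meant to feed. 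The two load-bearing steps are exactly the ones you flag: (a) restricting the super- and subsolution properties from $\real{}$ to the closed set is legitimate precisely because the paper's test functions on a region $\regionO$ must touch on an open neighborhood $\tilde{\regionO}\supset\regionO$ (Definition \ref{def:testFunction}), so every admissible test function for the restricted problem is already a local test function on $\real{}$; and (b) in the subsolution dichotomy, when the PDE part is strictly positive the full obstacle is active, $v(\bar z)=\max_{\hat{\xi}}\varphi^{\hat{\xi}}_{n,1}(\bar z)$, and Assumption \ref{assumption:structureSwitchingRegion}(i)--(ii) combined with $\overline{\stoppingRegionA{n}{\xi^{(0)}\xi^{(1)}\{\alpha_i\}}}\subset\stoppingRegionCandidate{\xi^{(0)}\xi^{(1)}\{\alpha_i\}}{1}\subset\stoppingRegionCandidate{\xi^{(0)}\xi^{(1)}}{1}$ force the active branch to be $\xi^{(1)}$, so the single obstacle also vanishes there; at all other points the PDE term carries the inequality, and no pointwise agreement of the two obstacles is needed. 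One caveat you should state explicitly: throughout you read $\varphi^{\xi^{(1)}}_{n,1}(z)=\valuefuncB{v}{z}{\xi^{(1)}}{n-1}-K$, consistent with Lemma \ref{prop:test_funct_simultaneous}, whereas the proposition as printed omits the $-K$; without that reading, your identification of the obstacle in (\ref{eq:variationalInequality}) with $\max_{\hat{\xi}}\varphi^{\hat{\xi}}_{n,1}$ --- and hence the whole argument --- would not parse.
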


\begin{proposition}[Another HJB-variational inequality and the special test function: \citeN{suzuki2020optimal}] \label{prop:specialTestFunc}~\\
  For any ${\xi^{(1)}}\in\controlspace(\xi^{(0)}), \xi^{(0)}\in\positionspace$, and given $\varphi^{\xi^{(1)}}_{n, 1}(z)=\valuefuncB{v}{z}{\xi^{(1)}}{n-1}, 1\leq n\leq +\infty$,  
  \begin{align}
    \varphi^{\xi^{(1)}}_{n, 1}(z)&\in\setViscositySolution{ \min \bigl\{ \delta V -\generator{V}-f(z, \xi^{(0)}), V  - \varphi^{\xi^{(1)}}_{n, 1}(z) \bigr\}=0} \text{ on } \overline{\stoppingRegionA{n}{\xi^{(0)}\xi^{(1)}\{\alpha_i\}}}  \label{eq:variationalInequality4}
  \end{align} 
  for each     $\alpha_i\in\Lambda$.
\end{proposition}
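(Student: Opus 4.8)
The plan is to check directly that the candidate function $\varphi \equiv \varphi^{\xi^{(1)}}_{n,1} = \valuefuncB{v}{z}{\xi^{(1)}}{n-1}$ is at once a viscosity subsolution and a viscosity supersolution of the variational inequality (\ref{eq:variationalInequality4}) on the closed region $\overline{\stoppingRegionA{n}{\xi^{(0)}\xi^{(1)}\{\alpha_i\}}}$, arranging matters so that the structural decomposition of the $\min$-operator in Lemma \ref{prop:VI_supersolution} reduces each half to a property that $\varphi$ already enjoys as the value function $\valuefuncB{v}{z}{\xi^{(1)}}{n-1}$. Throughout I write $F(z, V, DV, D^2V) = \delta V - \generator{V} - f(z, \xi^{(0)})$ for the differential part and $G = V - \varphi$ for the obstacle part. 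The subsolution direction is immediate: $G = 0$ is a non-differential equation and $\varphi$ satisfies $\varphi - \varphi \equiv 0$, so $\varphi \in \setViscositySubSolution{G=0}$ trivially, and relation (\ref{eq:VIsubG}) then yields $\varphi \in \setViscositySubSolution{\min\{F, G\}=0}$.

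For the supersolution direction, relation (\ref{eq:VIsuper}) permits me to treat the two constraints separately, so I need $\varphi \in \setViscositySuperSolution{G=0}$ and $\varphi \in \setViscositySuperSolution{F=0}$. The obstacle constraint is again trivial, since $\varphi - \varphi \equiv 0 \geq 0$. The differential constraint is the substance of the proof. I would take any super-test function $\psi \in \continuousFunction{2}$ for $\varphi$ minimized at an arbitrary $\bar{z} \in \overline{\stoppingRegionA{n}{\xi^{(0)}\xi^{(1)}\{\alpha_i\}}}$ (so $\psi$ touches $\varphi$ from below at $\bar{z}$) and must verify $\delta\varphi(\bar{z}) - \generator{\psi}(\bar{z}) - f(\bar{z}, \xi^{(0)}) \geq 0$. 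The decisive point is that $\varphi = \valuefuncB{v}{z}{\xi^{(1)}}{n-1}$ is itself a viscosity solution of its own HJB-variational inequality (Assumption \ref{theo:viscosityvn} with $(\xi^{(1)}, n-1)$ in place of $(\xi, n)$, using (\ref{eq:variationalInequality}) when $n-1 \geq 1$ and (\ref{eq:variationalInequality0}) when $n = 1$); applying (\ref{eq:VIsuper}) to that inequality extracts its differential half, so $\varphi$ is a viscosity supersolution of $\delta V - \generator{V} - f(z, \xi^{(1)}) = 0$ on all of $\real{}$. Since $\bar{z}$ lies in the interior of $\real{}$, and a super-test function relative to a neighborhood of the closed region is in particular a super-test function relative to $\real{}$, the same $\psi$ is admissible there and produces $\delta\varphi(\bar{z}) - \generator{\psi}(\bar{z}) - f(\bar{z}, \xi^{(1)}) \geq 0$.

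It then remains only to absorb the mismatch between the source regimes $\xi^{(1)}$ and $\xi^{(0)}$. Adding and subtracting $f(\bar{z}, \xi^{(1)})$ gives
\begin{align}
  \delta\varphi(\bar{z}) - \generator{\psi}(\bar{z}) - f(\bar{z}, \xi^{(0)}) = \underbrace{\bigl[\delta\varphi(\bar{z}) - \generator{\psi}(\bar{z}) - f(\bar{z}, \xi^{(1)})\bigr]}_{\geq 0} + \bigl[f(\bar{z}, \xi^{(1)}) - f(\bar{z}, \xi^{(0)})\bigr],
\end{align}
and since $\bar{z} \in \overline{\stoppingRegionA{n}{\xi^{(0)}\xi^{(1)}\{\alpha_i\}}} \subset \stoppingRegionCandidate{\xi^{(0)}\xi^{(1)}}{1}$ by Proposition \ref{prop:connectedSwitchingRegion}, the defining inequality (\ref{def:candidateRegion}) with $m = 1$ gives $f(\bar{z}, \xi^{(1)}) - f(\bar{z}, \xi^{(0)}) \geq \delta K \geq 0$ (recall $\delta > 0$ and $K \geq 0$). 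Hence the right-hand side is nonnegative, which establishes $\varphi \in \setViscositySuperSolution{F=0}$ and completes the supersolution direction, and with it the proof.

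I expect the main obstacle to be this transfer of the supersolution property from the ``wrong'' generator equation (regime $\xi^{(1)}$, whose PDE $\varphi$ genuinely satisfies) to the ``correct'' one (regime $\xi^{(0)}$); this is the only step in which the simultaneous-switching geometry is used. It succeeds because the generator $\generator{}$ is regime-independent and $f$ enters only as an additive source, so changing the regime merely shifts the equation by $f(\cdot, \xi^{(1)}) - f(\cdot, \xi^{(0)})$, and the reference region $\stoppingRegionCandidate{\xi^{(0)}\xi^{(1)}}{1}$ was defined in (\ref{def:candidateRegion}) with $m = 1$ precisely so that this shift is nonnegative there, while Proposition \ref{prop:connectedSwitchingRegion} guarantees the favorable sign on the entire closed region including its boundary. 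The case $n = \infty$ would follow by passing to the limit in the finite-$n$ statement using the convergence of $\valuefuncB{v}{z}{\xi^{(1)}}{n-1}$, but the finite-$n$ argument above carries all the content.
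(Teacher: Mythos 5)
Your proof is correct and takes essentially the same route as the paper: the paper itself states this proposition without a standalone proof (deferring to \citeN{suzuki2020optimal}), but the identical argument appears as case (iii) in the proof of Theorem \ref{theo:piecewiseClassicalViscositySolution} --- extract the supersolution property of $\valuefuncB{v}{z}{\xi^{(1)}}{n-1}$ with respect to its own PDE from Assumption \ref{theo:viscosityvn} via (\ref{eq:VIsuper}), transfer it to the regime-$\xi^{(0)}$ equation using $f(z,\xi^{(1)})-f(z,\xi^{(0)})\geq\delta K\geq 0$ on the reference region, observe that the obstacle equation holds trivially, and recombine with Lemma \ref{prop:VI_supersolution}. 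Your only departures are cosmetic: you carry out the regime shift at the test-function level instead of the paper's constant-shift manipulation, and you invoke Proposition \ref{prop:connectedSwitchingRegion} so that the inequality holds on the closed region $\overline{\stoppingRegionA{n}{\xi^{(0)}\xi^{(1)}\{\alpha_i\}}}$, which is precisely what the closed-region statement requires.
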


\begin{theorem}[Connected switching regions: \citeN{suzuki2020optimal}] \label{prop:connectedRegion}~\\
  Under Assumption \ref{assumption:structureSwitchingRegion}, 
  for each index $\alpha_i\in\Lambda$, the set ${\stoppingRegionA{n}{\xi^{(0)}\xi^{(1)}\{\alpha_i\}}}$ is a single connected region, i.e., 
  \begin{align}
    &\overline{\stoppingRegionA{n}{\xi^{(0)}\xi^{(1)}\{\alpha_i\}}}={\stoppingRegionA{n}{\xi^{(0)}\xi^{(1)}\{\alpha_i\}}}
  \end{align} 
  for each index $\alpha_i\in\Lambda$ and ${\xi^{(1)}}\in\controlspace(\xi^{(0)}), \xi^{(0)}\in\positionspace$.
\end{theorem}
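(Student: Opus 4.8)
The plan is to prove the nontrivial inclusion $I \subset \stoppingRegionA{n}{\xi^{(0)}\xi^{(1)}\{\alpha_i\}}$, where $I \equiv \overline{\stoppingRegionA{n}{\xi^{(0)}\xi^{(1)}\{\alpha_i\}}}$ denotes the filled interval $[\inf\stoppingRegionA{n}{\xi^{(0)}\xi^{(1)}\{\alpha_i\}},\, \sup\stoppingRegionA{n}{\xi^{(0)}\xi^{(1)}\{\alpha_i\}}]$. Since the opposite inclusion is immediate from the definition of $I$, this equality is precisely the asserted connectedness. Recalling that $z$ lies in $\stoppingRegionA{n}{\xi^{(0)}\xi^{(1)}}$ exactly when $\valuefuncB{v}{z}{\xi^{(0)}}{n} = \varphi^{\xi^{(1)}}_{n, 1}(z)$, and that $I \subset \stoppingRegionCandidate{\xi^{(0)}\xi^{(1)}\{\alpha_i\}}{1}$ by Proposition \ref{prop:connectedSwitchingRegion}, it suffices to show that the value function equals the obstacle throughout $I$.

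The engine of the argument is that \emph{two} viscosity solutions of the \emph{same} variational inequality live on $I$. By Proposition \ref{prop:anotherVI}, $\valuefuncB{v}{z}{\xi^{(0)}}{n}$ solves $\min\{\delta V - \generator{V} - f(z, \xi^{(0)}),\, V - \varphi^{\xi^{(1)}}_{n, 1}(z)\} = 0$ on $I$; by Proposition \ref{prop:specialTestFunc}, the obstacle $\varphi^{\xi^{(1)}}_{n, 1}(z)$ solves the very same inequality on $I$. One inequality comes for free: since $\valuefuncB{v}{z}{\xi^{(0)}}{n}$ is in particular a viscosity supersolution, Lemma \ref{prop:VI_supersolution} (through the non-differential branch, which forces $V - \varphi^{\xi^{(1)}}_{n, 1} \geq 0$) gives $\valuefuncB{v}{z}{\xi^{(0)}}{n} \geq \varphi^{\xi^{(1)}}_{n, 1}(z)$ on all of $I$. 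It remains only to reverse this.

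For the reverse inequality I would run a comparison argument on $I$, regarding $\valuefuncB{v}{z}{\xi^{(0)}}{n}$ (continuous by Theorem \ref{theo:continuity}, hence u.s.c.) as a subsolution and $\varphi^{\xi^{(1)}}_{n, 1}(z)$ as a supersolution. The Dirichlet data match at the two endpoints of $I$: the set $\stoppingRegionA{n}{\xi^{(0)}\xi^{(1)}\{\alpha_i\}}$ is closed, being the intersection of the closed coincidence set $\stoppingRegionA{n}{\xi^{(0)}\xi^{(1)}}$ with the closed component $\stoppingRegionCandidate{\xi^{(0)}\xi^{(1)}\{\alpha_i\}}{1}$, so $\inf$ and $\sup$ belong to it and there $\valuefuncB{v}{z}{\xi^{(0)}}{n} = \varphi^{\xi^{(1)}}_{n, 1}(z)$ by continuity. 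Comparison then forces $\valuefuncB{v}{z}{\xi^{(0)}}{n} \leq \varphi^{\xi^{(1)}}_{n, 1}(z)$ on $I$, and combined with the supersolution inequality we obtain $\valuefuncB{v}{z}{\xi^{(0)}}{n} = \varphi^{\xi^{(1)}}_{n, 1}(z)$ throughout $I$. Thus $I \subset \stoppingRegionA{n}{\xi^{(0)}\xi^{(1)}}$, and intersecting with $\stoppingRegionCandidate{\xi^{(0)}\xi^{(1)}\{\alpha_i\}}{1} \supset I$ yields $I \subset \stoppingRegionA{n}{\xi^{(0)}\xi^{(1)}\{\alpha_i\}}$, the desired equality.

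The step I expect to be delicate is the comparison principle on $I$: Assumption \ref{theo:comparison} is phrased globally on $\real{}$ for inequality (\ref{eq:variationalInequality}) under a linear-growth condition, whereas I need comparison for the modified inequality (\ref{eq:variationalInequality3}) on the bounded interval $I$ with matching boundary values at its endpoints. I would supply this via the standard comparison theorem for proper second-order operators on a bounded domain — the operator $\delta V - \generator{V} - f$ is degenerate elliptic and strictly monotone in $V$ since $\delta > 0$, and Lemma \ref{prop:VI_supersolution} shows the obstacle term preserves properness — so that equal boundary data propagate into the interior. Should an endpoint of $I$ be infinite, I would instead invoke the global principle of Assumption \ref{theo:comparison}, using the linear-growth bound of Assumption \ref{theo:finite} to control behaviour at infinity in place of a boundary condition.
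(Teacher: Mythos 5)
Your proof is correct and follows essentially the route the paper's own machinery is built for: Propositions \ref{prop:anotherVI} and \ref{prop:specialTestFunc} supply exactly the two viscosity solutions of the same variational inequality on the filled interval $\overline{\stoppingRegionA{n}{\xi^{(0)}\xi^{(1)}\{\alpha_i\}}}$, closedness of the coincidence set (continuity, Theorem \ref{theo:continuity}) gives matching Dirichlet data at its endpoints, and comparison then forces $v=\varphi^{\xi^{(1)}}_{n,1}$ throughout, so the whole interval lies in $\stoppingRegionA{n}{\xi^{(0)}\xi^{(1)}}$ and hence equals $\stoppingRegionA{n}{\xi^{(0)}\xi^{(1)}\{\alpha_i\}}$. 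The one point the paper leaves implicit --- that Assumption \ref{theo:comparison} is stated globally for (\ref{eq:variationalInequality}) while the argument needs comparison for (\ref{eq:variationalInequality3}) on a bounded interval (or, for an infinite endpoint, on a half-line with the linear-growth bound of Assumption \ref{theo:finite}) --- is precisely the caveat you flag, and your resolution via properness of the obstacle problem ($\delta>0$) is the standard and correct completion.
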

\end{omitlevel1}

In (\ref{def:stoppingContinuousRegion}) we have only formally defined the switching regions and the continuation regions 
whose structure is identified in the following summarizing remark. 
\begin{corollary}[Structure of switching regions: \citeN{suzuki2020optimal}] \label{coro:structureSwitchingRegion}~\\
  Under Assumption \ref{assumption:structureSwitchingRegion},
  for each $\xi^{(0)}\in\positionspace$,
  ${\stoppingRegionA{n}{\xi^{(0)}\xi^{(1)}\{\alpha_i\}}}\subset\stoppingRegionCandidate{\xi^{(0)}\xi^{(1)}\{\alpha_i\}}{1}$
  is at most only one connected region. 
  $\bigoplus_{\alpha_i\in\Lambda}\stoppingRegionA{n}{\xi^{(0)}\xi^{(1)}\{\alpha_i\}}=\stoppingRegionA{n}{\xi^{(0)}\xi^{(1)}}$, 
  where each $\stoppingRegionCandidate{\xi^{(0)}\xi^{(1)}\{\alpha_i\}}{1}, {\xi^{(1)}}\in\controlspace(\xi^{(0)}), \alpha_i\in\Lambda$ is disjoint.
\end{corollary}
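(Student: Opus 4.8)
The plan is to treat this as a consolidation corollary: each of its three assertions is either a direct consequence of an earlier definition or a verbatim restatement of a result already established under Assumption \ref{assumption:structureSwitchingRegion}. So my strategy is to itemize the three claims and discharge each one by pointing to the appropriate prior statement, taking care only that the indexing conventions line up.

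First I would dispense with the containment $\stoppingRegionA{n}{\xi^{(0)}\xi^{(1)}\{\alpha_i\}}\subset\stoppingRegionCandidate{\xi^{(0)}\xi^{(1)}\{\alpha_i\}}{1}$. This is immediate from the defining relation (\ref{def:Salpha}): since $\stoppingRegionA{n}{\xi^{(0)}\xi^{(1)}\{\alpha_i\}}$ is introduced precisely as $\stoppingRegionA{n}{\xi^{(0)}\xi^{(1)}}\cap\stoppingRegionCandidate{\xi^{(0)}\xi^{(1)}\{\alpha_i\}}{1}$, it sits inside both factors. Next, the claim that this set is at most one connected region is exactly Theorem \ref{prop:connectedRegion}, which guarantees $\overline{\stoppingRegionA{n}{\xi^{(0)}\xi^{(1)}\{\alpha_i\}}}=\stoppingRegionA{n}{\xi^{(0)}\xi^{(1)}\{\alpha_i\}}$ for each $\alpha_i\in\Lambda$ under the standing assumption; I would simply invoke it.

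Second, the direct-sum formula $\bigoplus_{\alpha_i\in\Lambda}\stoppingRegionA{n}{\xi^{(0)}\xi^{(1)}\{\alpha_i\}}=\stoppingRegionA{n}{\xi^{(0)}\xi^{(1)}}$ is nothing but Corollary \ref{prop:separatedSwitchingRegions}, which was derived from Assumption \ref{assumption:structureSwitchingRegion} (ii). Finally, the pairwise disjointness of the reference regions $\stoppingRegionCandidate{\xi^{(0)}\xi^{(1)}\{\alpha_i\}}{1}$ over $\alpha_i\in\Lambda$ is built into their construction, where $\stoppingRegionCandidate{\xi^{(0)}\xi^{(1)}}{1}$ is split into mutually disjoint single connected components; this disjointness then passes to the sets $\stoppingRegionA{n}{\xi^{(0)}\xi^{(1)}\{\alpha_i\}}$ via the containment already noted.

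Because every ingredient is in hand, I do not expect a substantive obstacle. The nearest thing to a hard part is bookkeeping: verifying that the grouping by the index $\{\alpha_i\}$ is used consistently across (\ref{def:Salpha}), Theorem \ref{prop:connectedRegion}, and Corollary \ref{prop:separatedSwitchingRegions}, and confirming that the disjointness of the $\stoppingRegionCandidate{\xi^{(0)}\xi^{(1)}\{\alpha_i\}}{1}$ components indeed forces the intersections $\stoppingRegionA{n}{\xi^{(0)}\xi^{(1)}\{\alpha_i\}}$ to be mutually disjoint as well. Once these conventions are reconciled, the corollary follows by direct assembly of the cited results.
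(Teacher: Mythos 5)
Your proposal is correct and matches the paper's treatment exactly: the paper offers no separate argument for Corollary \ref{coro:structureSwitchingRegion}, presenting it as a ``summarizing remark'' that assembles the definition (\ref{def:Salpha}), Theorem \ref{prop:connectedRegion}, Corollary \ref{prop:separatedSwitchingRegions}, and the disjoint decomposition of $\stoppingRegionCandidate{\xi^{(0)}\xi^{(1)}}{1}$ into connected components, which is precisely the consolidation you carry out.
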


\begin{omitlevel3}
In order to identify the structure of the switching regions, another useful criterion is the non-empty condition, which depends on the uniqueness property of the viscosity solution on an unbounded domain when the solution grows linearly.
For the case of our problem, we will prove the following non-empty condition, when the solution is quadratic form on unbounded domain.
\begin{lemma}[Non-empty condition: \citeN{suzuki2020optimal}] \label{theo:nonempty}
  For $\xi\in\positionspace$, 
  \begin{align}
    \exists z\in\real{}, \exists \hat{\xi}\in\controlspace(\xi) s.t., v(z, \xi, 0)\leq v(z, \hat{\xi}, 0)-K \Longrightarrow \forall n\geq 1, \stoppingRegionA{n}{\xi}\ne \phi  \label{cond:nonEmpty}
  \end{align}
\end{lemma}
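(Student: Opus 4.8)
The plan is to argue by contradiction for each fixed $n\geq 1$: I will show that an empty switching region $\stoppingRegionA{n}{\xi}$ forces $\valuefuncB{v}{z}{\xi}{n}$ to coincide with the no-switch value function $\valuefuncB{v}{z}{\xi}{0}$ on all of $\real{}$ (via uniqueness of solutions to the obstacle-free PDE on the unbounded domain), and then evaluate at the distinguished point supplied by the hypothesis to produce a point that must, after all, lie in $\stoppingRegionA{n}{\xi}$.

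First I would record two facts that hold at every $z\in\real{}$. By the dynamic-programming definition (\ref{def:valueFunc_criterion}), switching immediately from $\xi$ to any $\hat{\xi}\in\controlspace(\xi)$ (paying $K$) and then proceeding optimally with $n-1$ remaining switches is an admissible strategy, so the obstacle inequality $\valuefuncB{v}{z}{\xi}{n}\geq \max_{\hat{\xi}\in\controlspace(\xi)}\{\valuefuncB{v}{z}{\hat{\xi}}{n-1}-K\}$ holds everywhere, with equality precisely on $\stoppingRegionA{n}{\xi}$ by (\ref{def:stoppingContinuousRegion}). Moreover, since extra switching rights may always be left unused, the value function is nondecreasing in the number of remaining switches, i.e. $\valuefuncB{v}{z}{\hat{\xi}}{n-1}\geq \valuefuncB{v}{z}{\hat{\xi}}{0}$ for all $z$ and $n\geq 1$.

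Next, fix $n\geq 1$ and suppose $\stoppingRegionA{n}{\xi}=\phi$. Then $\continuationRegionA{n}{\xi}=\real{}$, so by Lemma \ref{prop:solutionOnC} the function $\valuefuncB{v}{\cdot}{\xi}{n}$ is a viscosity solution of the obstacle-free PDE $\delta V-\generator{V}-f(z,\xi)=0$ on all of $\real{}$; the function $\valuefuncB{v}{\cdot}{\xi}{0}$ solves the same PDE on $\real{}$ by (\ref{eq:variationalInequality0}), and both satisfy the linear-growth bound of Assumption \ref{theo:finite}. By uniqueness of linear-growth viscosity solutions (Theorem \ref{theo:continuity}, equivalently the comparison principle of Assumption \ref{theo:comparison} applied to the obstacle-free equation) they coincide, so $\valuefuncB{v}{z}{\xi}{n}=\valuefuncB{v}{z}{\xi}{0}$ on $\real{}$. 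Evaluating at the point $z_0$ and regime $\hat{\xi}_0\in\controlspace(\xi)$ furnished by the hypothesis, where $\valuefuncB{v}{z_0}{\xi}{0}\leq\valuefuncB{v}{z_0}{\hat{\xi}_0}{0}-K$, and chaining with the two facts above gives
\[
  \valuefuncB{v}{z_0}{\xi}{n}=\valuefuncB{v}{z_0}{\xi}{0}\leq\valuefuncB{v}{z_0}{\hat{\xi}_0}{0}-K\leq\valuefuncB{v}{z_0}{\hat{\xi}_0}{n-1}-K\leq\max_{\hat{\xi}\in\controlspace(\xi)}\{\valuefuncB{v}{z_0}{\hat{\xi}}{n-1}-K\},
\]
while the obstacle inequality supplies the reverse. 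Hence equality holds at $z_0$, i.e. $z_0\in\stoppingRegionA{n}{\xi}$, contradicting $\stoppingRegionA{n}{\xi}=\phi$; therefore $\stoppingRegionA{n}{\xi}\neq\phi$ for every $n\geq 1$.

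I expect the only delicate step to be the identity $\valuefuncB{v}{\cdot}{\xi}{n}\equiv\valuefuncB{v}{\cdot}{\xi}{0}$ forced by the empty switching region: it is exactly here that the unboundedness of $\real{}$ enters, and one must verify that the growth regime under which the comparison principle actually holds (linear growth in the present setting, but the quadratic growth of the concrete model of Section \ref{sec:example_model}) is genuinely satisfied by both $\valuefuncB{v}{\cdot}{\xi}{n}$ and $\valuefuncB{v}{\cdot}{\xi}{0}$. I note that only the one-sided estimate $\valuefuncB{v}{\cdot}{\xi}{n}\leq\valuefuncB{v}{\cdot}{\xi}{0}$, obtained from comparison (subsolution $\leq$ supersolution) rather than full uniqueness, already suffices to run the contradiction, which slightly relaxes what must be checked.
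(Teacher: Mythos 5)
Your overall strategy --- assume $\stoppingRegionA{n}{\xi}=\phi$, deduce $\valuefuncB{v}{\cdot}{\xi}{n}=\valuefuncB{v}{\cdot}{\xi}{0}$ on $\real{}$, and read off a contradiction at the hypothesis point $z_0$ --- is the natural one, and the bookkeeping parts (obstacle inequality from admissibility of immediate switching, monotonicity in $n$, the chain of inequalities at $z_0$) are sound. The gap sits exactly where you flagged it as ``delicate,'' and it is genuine, not a formality. You obtain the identification $\valuefuncB{v}{\cdot}{\xi}{n}=\valuefuncB{v}{\cdot}{\xi}{0}$ (or the one-sided bound) by invoking ``uniqueness of linear-growth viscosity solutions'' through Theorem \ref{theo:continuity} or Assumption \ref{theo:comparison}. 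But both of those statements concern the variational inequality (\ref{eq:variationalInequality}), not the obstacle-free equation $\delta V-\generator{V}-f=0$, and $\valuefuncB{v}{\cdot}{\xi}{0}$ cannot be fed into them as a supersolution of the VI: the obstacle constraint $V\geq\max_{\hat{\xi}\in\controlspace(\xi)}\{\valuefuncB{v}{z}{\hat{\xi}}{n-1}-K\}$ fails for $\valuefuncB{v}{\cdot}{\xi}{0}$ precisely at $z_0$ --- that failure \emph{is} the hypothesis of the lemma. So what you actually need is a comparison principle for the pure PDE on $\real{}$, which the paper nowhere supplies.

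Worse, even granting such a principle under linear growth, the functions at hand do not satisfy its hypotheses. Assumption \ref{theo:finite} gives only an upper linear bound (for all $n\geq 0$), and the lower bound of Assumption \ref{theo:boundedVf} holds only for $n\geq 1$; hence $\valuefuncB{v}{\cdot}{\xi}{0}$ need not be of linear growth, and in the very model this lemma serves it is not: by (\ref{eq:v0}), $\valuefuncB{v}{z}{\pm 1}{0}=-(k_2z^2\pm k_1z+k_0)$ is quadratically unbounded below. Your closing remark that one-sided comparison ``relaxes what must be checked'' does not help, because the problematic quadratic function is the supersolution side, which still must lie in the admissible growth class. This obstruction is exactly what the paper points to when introducing the lemma (uniqueness ``when the solution grows linearly'' versus ``quadratic form on unbounded domain''), and it is why the lemma is imported with its own proof from \citeN{suzuki2020optimal} rather than deduced from the linear-growth machinery of Section \ref{sec:assumptions_problem}. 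To close the gap you would need either (i) a comparison/uniqueness statement for $\delta V-\generator{V}-f=0$ in a class wide enough to contain $\valuefuncB{v}{\cdot}{\xi}{0}$ --- plausible here in the polynomial class, since the homogeneous solutions $\HermiteNu{\pm z}$ blow up like $e^{z^2}$, but this must be proved --- or (ii) a route bypassing PDE comparison altogether: if $\stoppingRegionA{n}{\xi}=\phi$, the dynamic programming principle (Theorem \ref{theo:DPP}) with $\tau_1=\inf\{s\geq t_0 : Z_s\in\stoppingRegion{n}\}=+\infty$ yields $\valuefuncB{v}{\cdot}{\xi}{n}=\valuefuncB{v}{\cdot}{\xi}{0}$ probabilistically (modulo a transversality estimate killing the discounted terminal term), after which your chain of inequalities at $z_0$ finishes the proof.
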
 
\end{omitlevel3}

\def\CC#1#2{C^{#1,#2}_{n}}
\def\CCn#1#2#3{C^{#1,#2}_{#3}}

\begin{omitlevel1}
\subsection{Identifying Switching Regions} 
ファイナンスの文脈の中で最適戦略を取り扱う以上、
具体的な数値として粘性解を求めることが最終目標である。
微分方程式:$F(\bar{x}, V_*(\bar{x}), D\varphi(\bar{x}), D^2 \varphi(\bar{x}))=0$の粘性解の定義はDefinition \ref{def:solutions}で与えられるが、直接この定義から解を決定するためには、以下のステップを要する。
\begin{enumerate}
\item 候補となる方程式の解$V(x)$を求める	    	
\item $V$の粘性優解性を示す\\
  $\varphi\in \continuousFunction{2}(\tilde{\mathcal{O}})$及び$\bar{x}\in\mathcal{O}$を満たすあらゆる$V$の試験関数$\varphi$ ($\TestFuncMin{\forall\varphi\in\continuousFunction{2}(\tilde{\mathcal{O}})}{\forall\bar{x}\in\mathcal{O}}{V}$)に対して、$F(\bar{x}, V_*(\bar{x}), D\varphi(\bar{x}), D^2 \varphi(\bar{x}))\geq  0$であることを示す
\item 同様に$V$の粘性劣解性を示す   	
\end{enumerate}
しかし、計算機を用いて上記ステップ2の全ての試験関数$\varphi$:$\TestFuncMin{\varphi\in\continuousFunction{2}(\tilde{\mathcal{O}})}{\bar{x}\in\mathcal{O}}{V}$に対し、この定義に忠実に検証を行うことは不可能であろう。
$'\forall'$という演算を通常の計算機上の数値計算で実行できない。
他にできることといえば、Lemma \ref{theo:classicalPart}を用いることにより、継続領域$\continuationRegionN$上という限定的な定義域において粘性解を古典解として求めることくらいである。

また、Corollary \ref{eq:valuefunctionallregionN}を用いて、継続領域$\continuationRegionN$上で求めた解とスイッチ領域$\stoppingRegionN$上の関数との連結部分を
Theorem \ref{theo:smoothfitvn} を用いて貼り合わせれば、必要とされる条件のいくつかを満たすような関数は得られるかもしれない。こうして得られた関数は求めるべき値関数(或は変分不等式の粘性解)であるためのいくつかの必要条件を満たしていることになる。これらの条件の依存関係を図示すると図\ref{diagram:VS_cond}のようになる。 

\begin{figure}[htbp]
  \begin{center}
    \begin{tikzpicture} 
      \matrix [column sep=10mm, row sep=5mm] {
        \node (v) [draw, shape=rectangle] {値関数:$\valuefuncB{v}{z}{\xi}{n}$}; \\
        \node (vs) [draw, shape=rectangle] {変分不等式(\ref{eq:variationalInequality})の(一意の)粘性解}; \\
        \node (nessCond) [draw, shape=rectangle] {\shortstack[l]{
            $\bullet\qquad   \delta v -\generator{v}-f=0,  z \in \continuationRegionN\;\qquad\qquad\qquad\qquad\qquad\qquad\cdots$ 式(\ref{eq:DEclassical})\\
            $\bullet\qquad \valuefuncB{v}{z}{\xi}{n} = \max_{\hat{\xi}\in\controlspace(\xi)}\{\valuefuncB{v}{z}{\hat{\xi}}{n-1}-K\},   z \in \stoppingRegionN\;\cdots$  式(\ref{eq:DEclassical})\\
            $\bullet\qquad \vf \in C^1(\real{})\;\qquad\qquad\qquad\qquad\qquad\qquad\qquad\cdots$  式(\ref{formula:v_in_C1})}}; \\
      };
      \draw[<->, double,  thick] (v) -- (vs);
      \draw[->, double,  thick] (vs) -- (nessCond);
    \end{tikzpicture}
  \end{center}
  \caption{粘性解の条件式の依存関係}
  \label{diagram:VS_cond}
\end{figure}

しかしこのような手順により求められた関数は、そのままでは求めるべき値関数$v$であるための十分条件を満たしていない。
つまり一般的には数値計算によりたまたま求められた最初の継続領域上の古典解が、変分不等式(\ref{eq:variationalInequality})の粘性解となっているのかどうかは不明である。
未定係数を決める際の非線型方程式で解が複数ある場合、求めているものがどれなのかは不明である。解析的アプローチなら全ての解を一網打尽に求めることが可能かもしれないが、数値計算の収束計算の場合、一回の処理で求まる解は高々１個だけである。また次の処理でその他の解が求まる保証もないし、そもそもその他の解が存在するかどうかも不明である。存在するかどうか不明なものを探さなくてはならないことになり、数々の困難が立ちはだかる。

本稿の問題の場合、継続領域$\continuationRegionN$を支配する二階線形微分方程式$\delta v -\generator{v}-f=0$の(継続領域上の)古典解はある種の一般解を持つ。
これは二次元の線形独立な基底関数(例えば式(\ref{frm:generalSolution}), $\{\HermiteNu{z}, \HermiteNu{-z}\}$)の線形結合の形式に特殊解を加えた形式の一般解である。
従っていわゆる微分方程式の数値解法を持ち出す必要はない。
一般解が初等関数で表されない場合であっても、
少なくとも関数形が不明なまま行われる数値解析よりは見通し良く求解できる。

\subsection{区分的一般解を用いた粘性解の構築} \label{sec:一般解を用いた粘性解の特定}
ここでは、継続領域ごとに、自由境界問題の条件を満たす一般解を求め、それらを連結することにより粘性解を構築することを考える。
まず、一般に区分的に条件を満たす古典解を単純に連結するだけでは求める粘性解は得られないことを次の節で述べる。

\subsubsection{古典解を接ぎ合わせて粘性解になるのか？} 
本稿の問題の場合、個々の継続領域は自由境界問題という微分方程式系により支配されており、
2階の場合、1個の有限連結継続領域当たり任意定数が2個、未知の自由境界が2個の計4個の未知数を、4本の非線型連立方程式から求めることになる。
しかし一般的に、非線形方程式の実数根は複数あり得る。ただし当然のことながら、境界条件を満たす一般解が複数あっても値関数は一意であるため、値関数に一致する解は一つだけである。
さらに一般にはレジーム毎に、連結した継続領域は$\real{}$上に複数個あり得るため問題は複雑になる。
非線形連立方程式の根の個数が予め1個と特定できればこの問題は回避可能かもしれないが、
一般の方程式の実数根の個数を予め特定することは難しい。これは方程式を構成するパラメータの範囲に依存して数値計算による場合分けが必要な煩雑な問題であり、この手順は求解処理に支障をきたすものである。
境界条件を満たす一般解が複数あるときは、それぞれの解が求める値関数であるかどうか別途検証作業が必要となる。

この問題を緩和するのが粘性解である。
粘性解は古典解の条件を緩和したものであり、同一区間であれば古典解は同時に粘性解である(Lemma \ref{lemma:classical_viscosity}, \ref{prop:solutionOnC}, \ref{theo:classicalPart})。
しかし粘性解の場合、古典解と異なり、定義域に拡張性がある。$C^2$-級でない点も含めて複数の連結継続領域を包含し、実数空間全域を定義域とする粘性解も構築できる。
古典解は独立した継続領域上に個々に存在しているが、本稿の問題では実数全域にわたる古典解は存在しない。つまり継続領域境界では$C^2$-級にならない(Theorem \ref{theo:smoothfitvn})。
実数空間全域を定義域とするような粘性解を構築する場合、個々の連結継続領域上の同一区間に複数存在していた古典解のうち粘性解の条件を満たさないものは排除される。
これにより値関数になり得る古典解候補のみが選別されることになる。

以上の考察により、求める粘性解は、実数全域の定義域のうち、個々の連結継続領域上で古典解となっていることがわかる。
つまり古典解が一般解で表現でき、実数全域をいずれかの連結継続領域に分割することが出来れば、粘性解は、連結継続領域境界以外は古典的一般解で表現できることになる。
式(\ref{eq:relationOSC})-(\ref{eq:seemlessR})により、個々のレジーム$\xi^{(0)}\in \positionspace$毎に、必要なら同時スイッチを全て行って最終継続領域に達した状態別に実数全域を分割して、
\begin{align}  
  \real{}&=\bigoplus_{\begin{subarray}{l} {\xi}^{(p)}\in\controlspace(\xi^{(p-1)})\\ p=1, \cdots, m\\ m=0,1,\cdots \end{subarray}}  \bigl( \stoppingRegionA{n}{\xi^{(0)}\cdots{\xi}^{(m)}} \cap \continuationRegionA{n-m}{{\xi}^{(m)}} \bigr)  \label{eq:R_decompose_to_C}
\end{align}
と書ける。
すなわち実数全域の各点は、境界を除き互いに素な最終継続領域内部にあるといえる。
すなわち、Lemma \ref{theo:classicalPart}より、求める値関数は区分的に分割すれば、何らかの微分方程式の古典的一般解として表現でき、それら古典解を繋ぎ合わせたかたちで書き表わせる。

ただし、単純に区分的な古典的一般解を無条件に繋ぎ合わせるだけでは、実数全域を定義域とする粘性解にはならないことは上述の通りである。
それらの繋ぎ目を適切に処理し、候補となる複数の区分的古典解の中から適切な古典解を選別することによって変分不等式(\ref{eq:variationalInequality})の粘性解は得られることになる。

\subsubsection{区分古典解を用いた粘性解の構築} 
そこで、粘性解を具体的に求めるため、古典的な一般解から粘性解を直接特定できるような手法を考える。
それが可能であれば粘性解の一意性により、求められたものが唯一の粘性解となり、同時にそれが求める値関数に一致することになる。
その手続きを実装するのがTheorem \ref{theo:systemequation_viscosity}である。
現実の具体的問題の多くは個別事情を含んでおり、前提も異なるため汎用的定理が利用できない。そのような場合、定理に相当するものを、その都度自ら構築しなくてはならない。
継続領域$\continuationRegionN$は一般的には以下のように複数の開集合による部分連結領域$\continuationRegionA{n}{\xi\{\beta_i\}}, {\beta_i\in\Xi}$の直和に分解される。
\begin{align} 
  \continuationRegionN=\bigoplus_{\beta_i\in\Xi}\continuationRegionA{n}{\xi\{\beta_i\}} \label{eq:decomp_continuation_region}
\end{align}
個々の連結領域$\continuationRegionA{n}{\xi\{\beta_i\}}\; (\beta_1, \beta_2, \cdots )$に対し、式(\ref{eq:generalsolutionN})の上式で与えられた非同次の一般解のうちの、所定の条件を満たす特殊解を決定することができれば、それが求めるべき$\real{}$全域の変分不等式の粘性解になるというのが、次に示すTheorem \ref{theo:systemequation_viscosity}である。
一般解は式(\ref{eq:generalsolutionN})のような$n$に関する漸化式となっており、再帰的計算処理において、スイッチ権利数$n$の値関数を求める際、スイッチ権利数$n-1$の値関数は既知の関数扱いである。接続先の関数形は既知であるが、どこで接続するかが未知であるような自由境界問題の系であり、滑らかさ条件(regularity condition)を記述する式(\ref{formula:v_in_C1})に基づき、領域が有界の場合、各々の部分連結継続領域$\continuationRegionA{n}{\xi\{\beta_i\}} (\xi\in\positionspace, {\beta_i\in\Xi})$毎に、その各境界両端点で成立する0階、1階の境界条件(smooth pasting condition)による4元連立方程式から、未知数である両自由境界位置、及び2階線形微分方程式の2個の任意定数を確定させる非線型連立方程式を解くという手順になる。

\end{omitlevel1}

In the next theorem the definitions of $\continuationRegion{n}, \stoppingRegion{n}$ defined on equation (\ref{def:stoppingContinuousRegion}) wil be reset and reconstructed from the original definitions
and we use the function $\varphi^{\hat{\xi}}_{n, 1}(z) \equiv \valuefuncB{v}{z}{\hat{\xi}}{n-1} - K\subset\continuousFunction{}(\real{})$.
\begin{theorem}[Piecewise classical viscosity solution] \label{theo:piecewiseClassicalViscositySolution}~\\
  Let Assumption \ref{assumption:structureSwitchingRegion} holds.
  For each $\xi\in\positionspace$, $\valuefuncB{v}{z}{\hat{\xi}}{n-1}$ for ${\hat{\xi}\in\controlspace(\xi)}$ are given value functions (\ref{def:valueFunc_criterion}) and the non-degenerate elliptic PDE: $F(z, V(z), DV(z), D^2V(z))=\delta V -\generator{V}-f(z, \xi)=0$ has a classical particular solution $u_0$ on $\overline{\continuationRegion{n}}$, where
  \begin{align}
    \continuationRegion{n} \equiv \Biggl\{z\in\real{}\Biggm| 
    \begin{split}
      F(z, u_0(z), u_0'(z), u_0''(z))=0\\
      u_0(z) >  \displaystyle\max_{\hat{\xi}\in\controlspace(\xi)}\varphi^{\hat{\xi}}_{n, 1}(z)
    \end{split}  \label{def:C_u}
    \Biggr\}. 
  \end{align}
  Moreover, if 
  \begin{align}
    \stoppingRegion{n}&\equiv\real{}\backslash\continuationRegion{n},\\ 
    u_S(z) &\equiv  \displaystyle\max_{\hat{\xi}\in\controlspace(\xi)}\varphi^{\hat{\xi}}_{n, 1}(z) \text{ on } \stoppingRegion{n}, \label{def:u_S}\\
    u&\equiv 
    \begin{cases} 
      u_0 & \text{ on } \continuationRegion{n},\\
      u_S & \text{ on } \stoppingRegion{n},
    \end{cases} \label{def:u}
    \intertext{ and  $u$  is differentiable on }
    \partial\continuationRegion{n}&\equiv \{z\in\real{}| \forall \varepsilon>0, B_\varepsilon(z)\cap\continuationRegion{n}\ne\phi, B_\varepsilon(z)\cap\stoppingRegion{n}\ne\phi\},  \label{differetiability_u}
  \end{align}
  then
  \begin{align}
    u(z)\in\setViscositySolution{ \min \bigl\{ F(z, V(z), DV(z), D^2V(z)), V  -\max_{\hat{\xi}\in\controlspace(\xi)}\varphi^{\hat{\xi}}_{n, 1}(z) \bigr\}=0} \text{ on } \real{}.  \label{u:solution}
  \end{align}
\end{theorem}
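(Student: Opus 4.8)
The plan is to verify directly that $u$ is at once a viscosity supersolution and a viscosity subsolution of the variational inequality (\ref{u:solution}) on all of $\real{}$, handling separately the open continuation region $\continuationRegion{n}$, the open part of the switching region $\stoppingRegion{n}$, and the free boundary $\partial\continuationRegion{n}$. Throughout I write $G(z)\equiv V-\max_{\hat{\xi}\in\controlspace(\xi)}\varphi^{\hat{\xi}}_{n,1}(z)$ for the obstacle part, and I first note that $u$ is continuous: it is $\continuousFunction{2}$ on each open piece and, by the differentiability hypothesis (\ref{differetiability_u}) together with $u_0\in\continuousFunction{2}(\overline{\continuationRegion{n}})$, the two branches agree in value and first derivative at every $\bar{z}\in\partial\continuationRegion{n}$, so $u(\bar{z})=u_0(\bar{z})=\max_{\hat{\xi}}\varphi^{\hat{\xi}}_{n,1}(\bar{z})$ and $u'(\bar{z})=u_0'(\bar{z})$. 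Continuity gives $u_*=u^*=u$, so the semicontinuous envelopes cause no trouble.

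On the open set $\continuationRegion{n}$ the function $u=u_0$ is the $\continuousFunction{2}$ classical solution of $F=0$ and, by the strict inequality in (\ref{def:C_u}), it strictly exceeds the obstacle, whence $u\in\setViscositySolution{F=0}\cap\setViscositySuperSolution{G=0}$ there; equation (\ref{eq:VIsolution_}) of Lemma \ref{prop:VI_supersolution} then yields $u\in\setViscositySolution{\min\{F,G\}=0}$ on $\continuationRegion{n}$. On the interior of $\stoppingRegion{n}$, Assumption \ref{assumption:structureSwitchingRegion} makes the reference regions $\stoppingRegionCandidate{\xi\hat{\xi}}{1}$ mutually disjoint, so near any interior switching point exactly one competitor attains the maximum and $u$ coincides locally with a single $\varphi^{\xi^{(1)}}_{n,1}$; Proposition \ref{prop:specialTestFunc} then delivers the variational-inequality solution property there outright.

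The decisive work is at the free boundary $\partial\continuationRegion{n}$, where the differentiability hypothesis is exploited. Since the obstacle is active, $G(\bar{z})=u(\bar{z})-\max_{\hat{\xi}}\varphi^{\hat{\xi}}_{n,1}(\bar{z})=0$, the subsolution inequality is immediate: for any supertest function one has $\min\{F,G\}\le G(\bar{z})=0$. For the supersolution inequality I must show the $F$-term is nonnegative. Because $u$ is differentiable at $\bar{z}$, every element of $\subjets{u(\bar{z})}{B_\varepsilon(\bar{z})}$ has first coordinate pinned to $u'(\bar{z})=u_0'(\bar{z})$, while a one-sided Taylor comparison of $u$ with its $\continuousFunction{2}$ continuation-side branch forces the second coordinate $X$ to obey $X\le u_0''(\bar{z})$. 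As $u_0$ solves $F=0$ classically on the closed region $\overline{\continuationRegion{n}}$, we have $F(\bar{z},u_0(\bar{z}),u_0'(\bar{z}),u_0''(\bar{z}))=0$, and the degenerate ellipticity/properness of $F$ in (\ref{def:Ffunction}) upgrades this to $F(\bar{z},u(\bar{z}),u'(\bar{z}),X)\ge F(\bar{z},u_0(\bar{z}),u_0'(\bar{z}),u_0''(\bar{z}))=0$ for every admissible $X$. Combined with $G(\bar{z})=0$ this gives $\min\{F,G\}\ge 0$, the supersolution property, and assembling the three regions completes the claim (\ref{u:solution}).

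I expect the main obstacle to be the semijet computation at the boundary, precisely the claim that every subjet second-order coefficient is bounded above by the continuation-side value $u_0''(\bar{z})$: this is where the $\continuousFunction{1}$-matching of the two branches and their one-sided $\continuousFunction{2}$-regularity must be combined carefully, in the same spirit as the semijet analysis (\ref{semijets_uz}) appearing in the proof of Theorem \ref{theo:C1 cond at connection}. The requirement that $u_0$ solve $F=0$ on the \emph{closed} region, rather than merely on the open one, is essential, since it is exactly the boundary identity $F(\bar{z},\cdot)=0$ that ellipticity then leverages; everything else reduces to bookkeeping across the partition of $\real{}$ supplied by Assumption \ref{assumption:structureSwitchingRegion}.
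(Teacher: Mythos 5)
Your proposal is correct and follows essentially the same route as the paper's own proof: the same three-way decomposition into $\continuationRegion{n}$, the interior of $\stoppingRegion{n}$, and $\partial\continuationRegion{n}$, with the continuation region handled via Lemma \ref{prop:VI_supersolution}, the switching interior via the supersolution property of the obstacle (your citation of Proposition \ref{prop:specialTestFunc} packages exactly what the paper re-derives inline from Assumptions \ref{theo:viscosityvn} and \ref{assumption:structureSwitchingRegion}(ii)), and the free boundary via semijets plus degenerate ellipticity. Your boundary treatment is in fact a slight streamlining of the paper's: you need only the continuation-side bound $X\le u_0''(\bar z)$ for the supersolution inequality and get the subsolution inequality for free from the active obstacle $G(\bar z)=0$, whereas the paper computes both semijets with $\min\{u_0'',u_S''\}$ and $\max\{u_0'',u_S''\}$ and applies ellipticity twice.
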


\begin{proof}~\\ 
  We first prove $\continuationRegion{n}$  is an open set.
  From (\ref{differetiability_u}) $u$ is continuous at $z=\bar{z}\in\partial\continuationRegion{n}\cap\stoppingRegion{n}$, i.e.,
  \begin{align}
    u_0(\bar{z})&=u_S(\bar{z})=\displaystyle\max_{\hat{\xi}\in\controlspace(\xi)}\varphi^{\hat{\xi}}_{n, 1}(\bar{z}), \qquad \bar{z}\in\partial\continuationRegion{n}\cap\stoppingRegion{n}.\\
    u_0(z) &>  \displaystyle\max_{\hat{\xi}\in\controlspace(\xi)}\varphi^{\hat{\xi}}_{n, 1}(z) \text{ on } \continuationRegion{n}, 
  \end{align} 
  from (\ref{def:C_u}).
  As both $u_0(z)$ and $\max_{\hat{\xi}\in\controlspace(\xi)}\varphi^{\hat{\xi}}_{n, 1}(z)$ are continuous at $z=\bar{z}$,
  \begin{align}
    \forall z\in\continuationRegion{n}, \exists \varepsilon>0, B_\varepsilon(z)\subset\continuationRegion{n} \Longrightarrow \interior{\continuationRegion{n}}=\continuationRegion{n}. 
  \end{align} 
  Let $G$ be defined as 
  \begin{align}
    G(V)\equiv V-\displaystyle\max_{\hat{\xi}\in\controlspace(\xi)}\varphi^{\hat{\xi}}_{n, 1}(z) \text{ on } \real{}.
  \end{align}

 (i) For $z\in\continuationRegion{n}$,\\
  due to (\ref{def:C_u}),
  \begin{align}
    0&=F(z, u_0(z), u_0'(z), u_0''(z))&<G(u_0(z)),\\
    0&=F(z, u(z), u'(z), u''(z))&<G(u(z)),
  \end{align} 
  which proves $u$ satisfies (\ref{u:solution}).

  (ii) For $\bar{z}\in\partial\continuationRegion{n}$,\\
  from the assumption (\ref{differetiability_u}), $u'(\bar{z})$ exists.

  As $u_0\in\continuousFunction{2}(\overline{\continuationRegion{n}})$, $u_0''(\bar{z})$ exists and
  \begin{align}
    \lim_{z\in\continuationRegion{n}\to\bar{z}}u''(z)=u_0''(\bar{z}).
  \end{align}
  From (\ref{eq:relationOSC})
  \begin{align}
    \exists m\geq 1, 
    \bar{z}\in\partial\continuationRegionN\subset\continuationAfterSwitchingRegionA{n,m}{\xi\xi^{(1)}\cdots{\xi}^{(m)}}&=\stoppingRegionA{n}{\xi\xi^{(1)}\cdots{\xi}^{(m)}} \cap \continuationRegionA{n-m}{{\xi}^{(m)}}. \label{in:bar z in2}
  \end{align}
  From Lemma \ref{prop:test_funct_simultaneous}, 
  \begin{align}
    u_S''(\bar{z})=\{\displaystyle\max_{\hat{\xi}\in\controlspace(\xi)}\varphi^{\hat{\xi}}_{n, 1}\}''(\bar{z})=\varphi^{\xi^{(m)}}_{n, m}{''}(\bar{z}), \text{ as } \varphi^{\xi^{(m)}}_{n, m}\in\continuousFunction{2}(\continuationRegionA{n-m}{{\xi}^{(m)}})  \text{ and } \bar{z}\in\continuationRegionA{n-m}{{\xi}^{(m)}}
  \end{align}
  and
  \begin{align}
    \lim_{z\in\stoppingRegion{n}\to\bar{z}}u''(z)=u_S''(\bar{z}).
  \end{align}
  The semijets of $u$ at $\bar{z}$ are
  \begin{align}
    \begin{cases} 
      \subjets{u(\bar{z})}{B_\varepsilon(\bar{z})}&=\{(u'(\bar{z}), X)| X\leq \min\{u_0''(\bar{z}), u_S''(\bar{z}) \}\},\\
      \superjets{u(\bar{z})}{B_\varepsilon(\bar{z})}&=\{(u'(\bar{z}), X)| X\geq \max\{u_0''(\bar{z}), u_S''(\bar{z}) \}\}.
    \end{cases}
  \end{align}
  Using the above subjet,
  \begin{align}
      \forall (p, X)\in\subjets{u(\bar{z})}{B_\varepsilon(\bar{z})}, &\min\{F(\bar{z}, u(\bar{z}), p, X), G(u(\bar{z}))\}\\
      \geq&\min\{F(\bar{z}, u_0(\bar{z}), u_0'(\bar{z}), \min\{u_0''(\bar{z}), u_S''(\bar{z}) \}), G(u_S(\bar{z}))\}\\
      \geq&\min\{F(\bar{z}, u_0(\bar{z}), u_0'(\bar{z}), u_0''(\bar{z})), G(u_S(\bar{z}))\}\\
      =&\min\{0, 0\}=0.
  \end{align}
  Similarly, using the above superjet,
  \begin{align}
      \forall (p, X)\in\superjets{u(\bar{z})}{B_\varepsilon(\bar{z})}, &\min\{F(\bar{z}, u(\bar{z}), p, X), G(u(\bar{z}))\}\\
      \leq&\min\{F(\bar{z}, u_0(\bar{z}), u_0'(\bar{z}), \max\{u_0''(\bar{z}), u_S''(\bar{z}) \}), G(u_S(\bar{z}))\}\\
      \leq&\min\{F(\bar{z}, u_0(\bar{z}), u_0'(\bar{z}), u_0''(\bar{z})), G(u_S(\bar{z}))\}\\
      =&\min\{0, 0\}=0.
  \end{align}
  These prove $u$ satisfies (\ref{u:solution}).
  
  (iii) For $z\in\stoppingRegionA{n}{\xi\hat{\xi}}\backslash\partial\stoppingRegionA{n}{\xi\hat{\xi}}$ for some $\hat{\xi}\in\controlspace(\xi),$\\
  as $\valuefuncB{v}{z}{\hat{\xi}}{n-1}, {\hat{\xi}\in\controlspace(\xi)}$ is a given value function,
  from Assumption \ref{theo:viscosityvn} and (\ref{eq:VIsuper}), for $n\geq 2$
  \begin{align}
    \valuefuncB{v}{z}{\hat{\xi}}{n-1}&\in\setViscositySolution{ \min \bigl\{ \delta V -\generator{V}-f(z, \hat{\xi}), V  -\max_{\bar{\xi}\in\controlspace(\hat{\xi})}\varphi^{\bar{\xi}}_{n-1, 1}(z)\bigr\}=0} \text{ on } \real{},\\
    &\subset\setViscositySuperSolution{ \delta V -\generator{V}-f(z, \hat{\xi})=0} \text{ on } \real{}. \qquad (\text{from } (\ref{eq:VIsuper}))  \label{eq_continuation:piecewise}
  \end{align}
  Due to Assumption \ref{assumption:structureSwitchingRegion} (ii),
  \begin{align}
    \begin{cases}
      f(z, \hat{\xi})-f(z, \xi)\geq \delta K \text { on } \stoppingRegionA{n}{\xi\hat{\xi}}\subset \stoppingRegionCandidate{\xi\hat{\xi}}{1},\\
      \valuefuncB{v}{z}{\hat{\xi}}{n-1}=\varphi^{\hat{\xi}}_{n, 1}(z)+K \text{ on } \real{},
    \end{cases} \label{eq:cond iii}
  \end{align}
  therefore,
  \begin{align}
    (\ref{eq_continuation:piecewise})\Longrightarrow & \varphi^{\hat{\xi}}_{n, 1}(z)+K &&\in \setViscositySuperSolution{ \delta V -\generator{V}-f(z, \xi)=\delta K} \text { on } \stoppingRegionA{n}{\xi\hat{\xi}}\backslash\partial\stoppingRegionA{n}{\xi\hat{\xi}}\\
    \Longleftrightarrow & \varphi^{\hat{\xi}}_{n, 1}(z) &&\in \setViscositySuperSolution{ \delta V -\generator{V}-f(z, \xi)=0} \text { on } \stoppingRegionA{n}{\xi\hat{\xi}}\backslash\partial\stoppingRegionA{n}{\xi\hat{\xi}} \label{rel:piece_super1}
  \end{align}
  As $\varphi^{\hat{\xi}}_{n, 1}=\max_{\hat{\xi}\in\controlspace(\xi)}\varphi^{\hat{\xi}}_{n, 1}$ on $\stoppingRegionA{n}{\xi\hat{\xi}}$,
  \begin{align}
    \varphi^{\hat{\xi}}_{n, 1} &\in \setViscositySolution{ V -\varphi^{\hat{\xi}}_{n, 1}=0}=\setViscositySolution{ V -\max_{\hat{\xi}\in\controlspace(\xi)}\varphi^{\hat{\xi}}_{n, 1}=0} \text { on } \stoppingRegionA{n}{\xi\hat{\xi}}\label{rel:piece_sol1}
  \end{align}
  Therefore from (\ref{eq:VIsolution}), (\ref{rel:piece_super1}), (\ref{rel:piece_sol1}), (\ref{def:u_S}) and (\ref{def:u}),
  \begin{align}
    u(z)=u_S(z)=\max_{\bar{\xi}\in\controlspace(\xi)}\varphi^{\bar{\xi}}_{n, 1}= \varphi^{\hat{\xi}}_{n, 1}\in\setViscositySolution{ \min \bigl\{ \delta V -\generator{V}-f(z, \hat{\xi}), V  -\max_{\hat{\xi}\in\controlspace(\xi)}\varphi^{\hat{\xi}}_{n, 1}(z)\bigr\}=0} \text{ on } \stoppingRegionA{n}{\xi\hat{\xi}}\backslash\partial\stoppingRegionA{n}{\xi\hat{\xi}}.
  \end{align}
  From (\ref{eq:decompR}) and as all the boundaries between the regions $\stoppingRegionA{n}{\xi\hat{\xi}}, \stoppingRegionA{n}{\xi\bar{\xi}} \text{ and } \continuationRegionA{n}{\xi}, \hat{\xi}\ne\bar{\xi},\; \hat{\xi},\bar{\xi}\in\controlspace(\xi)$ on $\real{}$ are in the case (ii) for each $\xi$ due to (\ref{eq:empty_intersectSS}),
  all the cases (i), (ii) and (iii) prove $u$ satisfies (\ref{u:solution}) on entire $\real{}$.
\end{proof}

According to Corollary \ref{coro:structureSwitchingRegion} for each $\xi\in\positionspace$,
\begin{align}
  \forall z\in\stoppingRegionA{n}{\xi}, \exists\hat{\xi}\in\controlspace(\xi), \alpha\in\Lambda, s.t. 
          z\in{\stoppingRegionA{n}{\xi\hat{\xi}\{\alpha\}}}\subset\stoppingRegionCandidate{\xi\hat{\xi}\{\alpha\}}{1},
\end{align}
where    ${\stoppingRegionA{n}{\xi\hat{\xi}\{\alpha\}}}=\overline{\stoppingRegionA{n}{\xi\hat{\xi}\{\alpha\}}}$ is one connected region if exists,
and each $\stoppingRegionCandidate{\xi\hat{\xi}\{\alpha\}}{1}, \hat{\xi}\in\controlspace(\xi), \alpha\in\Lambda$ is  mutually disjoint.
That is, any switching region ${\stoppingRegionA{n}{\xi\hat{\xi}\{\alpha\}}}$ should belong to the reference region $\stoppingRegionCandidate{\xi\hat{\xi}\{\alpha\}}{1}$ for some $\alpha\in\Lambda$, therefore
we can limit the possible areas of the free boundaries of our free boundary problem within the reference regions.
We can also list all the reference regions disjointedly ordered. 

Therefore, Assumption \ref{assumption:structureSwitchingRegion} ensures the following algorithm to identify each free boundary subproblem. 
\begin{algorithm}[Identifying the system of free boundary problems] \label{algo:system_freeBoundary}~\\
  For each $\xi\in\positionspace$,
  \begin{enumerate}
    \item Sorting all the reference regions $\stoppingRegionCandidate{\xi\hat{\xi}\{\alpha\}}{1}, \hat{\xi}\in\controlspace(\xi), \alpha\in\Lambda$ in order\\
    \item For each pair of adjacent reference regions 
      $\stoppingRegionCandidate{\xi\hat{\xi}\{\alpha_i\}}{1}, \stoppingRegionCandidate{\xi\bar{\xi}\{\alpha_j\}}{1},\; (\stoppingRegionCandidate{\xi\hat{\xi}\{\alpha_i\}}{1}<\stoppingRegionCandidate{\xi\bar{\xi}\{\alpha_j\}}{1}),\; \hat{\xi}, \bar{\xi}\in\controlspace(\xi),\; \alpha_i, \alpha_j\in\Lambda$, solve the following free boundary problem with the smooth pasting condition:
      \begin{itemize}
        \item PDE:
        \begin{align}
          \delta V -\generator{V}-f(z, \xi)	=0
        \end{align}
        on a connected continuation region including the region between $\stoppingRegionCandidate{\xi\hat{\xi}\{\alpha_i\}}{1}$ and $\stoppingRegionCandidate{\xi\bar{\xi}\{\alpha_j\}}{1}$ 
        \item Left free boundary condition:\\
          Left free boundary 
          $\partial\stoppingRegionA{n}{\xi\hat{\xi}\{\alpha_i\}}(+) \in \stoppingRegionCandidate{\xi\hat{\xi}\{\alpha_i\}}{1}$,
          connecting to the function $\varphi^{\hat{\xi}}_{n, 1}(z)$
        \item Right free boundary condition:\\
          Right free boundary 
          $\partial\stoppingRegionA{n}{\xi\bar{\xi}\{\alpha_j\}}(-) \in \stoppingRegionCandidate{\xi\bar{\xi}\{\alpha_j\}}{1}$,
          connecting to the function $\varphi^{\bar{\xi}}_{n, 1}(z)$
      \end{itemize}
      i.e., the continuation region is
      $\continuationRegionA{n}{\xi\{\beta\}}=(\partial\stoppingRegionA{n}{\xi\hat{\xi}\{\alpha_i\}}(+), \partial\stoppingRegionA{n}{\xi\bar{\xi}\{\alpha_j\}}(-))$,  where $\partial\stoppingRegionA{}{}(+), \partial\stoppingRegionA{}{}(-)$ indicate the right and the left  boundaries of the connected region (interval) $\stoppingRegionA{}{}$.
  \item [$\cdot$]
    When the left-most connected reference region is finite, solve another free boundary problem with the infinite continuation region including its immediate left connected region of the left-most connected reference region including $-\infty$ with the asymptotic condition.
  \item [$\cdot$]
    When the right-most connected  reference region is finite, solve another free boundary problem with the infinite continuation region including its immediate right connected region of the right-most connected reference region including $\infty$ with the asymptotic condition.
  \end{enumerate}
\end{algorithm}

\section{Application to an Example in \citeN{suzuki2020optimal}}\label{sec:example_model}
\subsection{Problem Formulation}\label{sec:ProFor}
Suppose that the filtration  $\mathbb{F}=\filtration{\sigmaalgebra}{s}{\TimeSet}$ is generated by a one-dimensional standard Brownian motion $\{W_t: t\in\TimeSet\}$ and there are two stocks, A and B, and suppose that the one-dimensional process $X\equiv\log{\frac{S_A}{S_B}}$, the  spread between the logarithmic price of $S_A$ and that of $S_B$, follows a mean-reverting process (\ref{eq:OU_processs}), called an Ornstein--Uhlenbeck stochastic process. 
The term $dX$ is considered to be the infinitesimal spread return between the pair of stocks. For $t\in\TimeSet$, the dynamics of the infinitesimal spread return is formulated as follows:
\begin{align} 
  & \ousde{X}{t}{\theta}{\mu}{\sigma}{W}.  \label{eq:OU_processs} 
\end{align}
The parameter $\mu$ is a constant representing the long-run mean level of the process $X$, and $\theta > 0$ is the intensity of the mean-reverting process. The parameter $\sigma>0$ is the volatility of $X$.
Using the following change of variable from $X$ to $Z$,
\begin{align} 
  X-\mu=\dfrac{\sigma}{\sqrt{\theta}}Z \label{eq:XZ_transform},
\end{align}
equation (\ref{eq:OU_processs}) is transformed into 
\begin{align} 
  & \ousdeA{Z_t}{\theta}{t}{W_t},   \label{eq:OUprocessZ}
\end{align}
which is solved by the following formula for $s\geq t$:
\begin{align} 
  Z_s&=Z_t e^{-\theta(s-t)} + \sqrt{\theta} \intA{t}{s}{\discount{-\theta(s-u)}}{W_u}.   \label{eq:Solution-Z}  
\end{align}
The corresponding infinitesimal generator for $Z$ is $\generatorx{}{} \equiv \dfrac{\theta}{2}\dfrac{d^2}{d z^2}-\theta z \dfrac{d}{dz}$.
The set of regimes (switching states) $\positionspace$ is composed of three different positions $\{-1, 0, 1\}$ for which the investor can choose, where
\begin{align} 
  \positionspace : 
  \begin{cases}
    1 & \cdots (\text{long A/ short B}), \\
    -1 & \cdots (\text{long B/ short A}), \\
    0 & \cdots (\text{square}).
  \end{cases} \label{eq:positionspace}
\end{align} 
$\controlspace{(\xi)}$ denotes the feasible set of the states (positions) into which the current state $\xi$ can transfer; i.e., 
\begin{align} 
    \controlspace{(\xi)}\equiv \{\hat{\xi} | \hat{\xi}  \in \positionspace,   \abs{\hat{\xi} -\xi}=1 \}, \quad \xi\in\positionspace, \label{eq:control_space0}
\end{align}
which is reformulated as:
\begin{align} 
  \controlspace{(\xi)}=
  \begin{cases}
    \{0\} & \cdots (\xi\in\{\pm1\}),\\
    \{\pm1\} & \cdots (\xi=0).
  \end{cases} \label{eq:control_space}
\end{align}
Our objective function is composed of the gain (cumulative spread return) from the spread position less the transaction costs associated with the change in the positions.
The objective function $J$ is formulated as follows.
\begin{align} 
  \begin{split}
     \valuefuncC{J}{z}{\xi}{n}{\alpha} \equiv &  \ExpectationCond{\intA{{t_0}}{\infty}{\discount{-\delta(s-{t_0})}\process{\xi}{s}}{\process{X}{s}} 
       -\lambda\intA{{t_0}}{\infty}{\discount{-\delta(s-{t_0})}\{\process{\xi}{s}(\process{X}{s}-\mu)\}^2}{s} \\ 
     &  - \sum_{\tau_i\geq {t_0}}^{\tau_n}\discount{-\delta(\tau_i-{t_0})}  \abs{\xi_{\tau_{i}} - \xi_{\tau_{i-1}}} K}{Z_{t_0}^-=z, \xi_{t_0}^-=\xi},  \label{eq:performanceCriteria0} 
  \end{split}
\end{align} 
where $\delta>0$ is the time-discount rate. For convenience, we set $\tau_{n+1}\equiv \infty$.
We define the quadratic running reward function as: 
\begin{align} 
  f(z, \xi)\equiv -(\xi\sqrt{\theta}\sigma z + \lambda\abs{\xi}\dfrac{\sigma^2}{\theta}z^2). \label{def:f} 
\end{align}

\begin{omitlevel1}
\begin{lemma}[Uniformly integrable martingale] \label{theo:martingalePart}
  For any stopping time $\tau (\tau\geq t)$, the following equation holds.
\begin{align} 
  \expectationx{x}{\int_{t}^{\tau}{\discount{-\delta(s-t)}}{\{\process{\xi}{s}d\process{X}{s}-\lambda\{\process{\xi}{s}(\process{X}{s}-\mu)\}^2ds\}}} = \expectationx{z}{\intA{t}{\tau}{\discount{-\delta(s-t)}f(Z_s, \xi)}{s} } \label{eq:trans_timeIntegral} 
\end{align}
\end{lemma}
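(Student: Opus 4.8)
The plan is to turn the left-hand integrand into a finite-variation part that reproduces $f$ plus a stochastic-integral part, and then to argue that the stochastic-integral part drops out in expectation. Throughout, $\mathbb{E}_x$ and $\mathbb{E}_z$ denote the same expectation written in the two equivalent initial data linked by (\ref{eq:XZ_transform}); on the interval $[t,\tau]$ the regime is the constant value $\xi$, i.e. $\process{\xi}{s}=\xi$, which is why the right-hand side carries the single $f(Z_s,\xi)$.

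First I would substitute the dynamics (\ref{eq:OU_processs}), $dX_s=-\theta(X_s-\mu)\,ds+\sigma\,dW_s$, together with $X_s-\mu=\frac{\sigma}{\sqrt\theta}Z_s$ from (\ref{eq:XZ_transform}) and the identity $\xi^2=|\xi|$ valid for $\xi\in\{-1,0,1\}$, to obtain the pathwise decomposition
\[
  \xi\,dX_s-\lambda\{\xi(X_s-\mu)\}^2\,ds
  =\Bigl(-\xi\sqrt\theta\,\sigma Z_s-\lambda|\xi|\tfrac{\sigma^2}{\theta}Z_s^2\Bigr)ds+\sigma\xi\,dW_s
  =f(Z_s,\xi)\,ds+\sigma\xi\,dW_s,
\]
with $f$ exactly as in (\ref{def:f}). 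Multiplying by $e^{-\delta(s-t)}$, integrating over $[t,\tau]$ and taking expectations, the target identity (\ref{eq:trans_timeIntegral}) becomes equivalent to the single claim that the discounted stochastic integral has vanishing expectation, namely $\mathbb{E}\left[\int_t^\tau e^{-\delta(s-t)}\sigma\xi\,dW_s\right]=0$.

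The substance of the lemma---and the origin of its title---lies in this last claim, because $\tau$ may be unbounded. The idea is to show that $M_u\equiv\int_t^{u}e^{-\delta(s-t)}\sigma\xi\,dW_s$ is not merely a local martingale but a uniformly integrable one. Since $|\xi|\le 1$, its quadratic variation obeys
\[
  \mathbb{E}[\langle M\rangle_\infty]
  =\sigma^2\xi^2\int_t^\infty e^{-2\delta(s-t)}\,ds
  \le\frac{\sigma^2}{2\delta}<\infty,
\]
so $M$ is $L^2$-bounded, hence uniformly integrable; here the strictly positive discount $\delta$ is exactly what tames the infinite-horizon integral. The martingale convergence theorem then supplies a terminal $M_\infty\in L^2$, and the optional sampling theorem for uniformly integrable martingales gives $\mathbb{E}[M_\tau]=\mathbb{E}[M_t]=0$ for every stopping time $\tau\ge t$, including $\tau=\infty$.

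The hard part is therefore not the algebra of the first step but securing uniform integrability so that optional sampling remains legitimate at an unbounded $\tau$; the quadratic-variation estimate above is precisely what delivers this. A final, routine point is the interchange of expectation with the (improper) time integral on the right-hand side, which I would justify by Fubini using the exponential discount together with the linear-growth control of $Z_s$ along the Ornstein--Uhlenbeck dynamics, in the same spirit as Assumption \ref{theo:finite}. Combining the drift/martingale decomposition with the vanishing of the martingale term then yields (\ref{eq:trans_timeIntegral}).
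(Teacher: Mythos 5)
Your proof is correct. There is, in fact, nothing in the paper to compare it against: the lemma is stated bare (inside an excluded \texttt{omitlevel1} block, with no \texttt{proof} environment and no citation), so the paper relies on it without proving it. Your argument is the natural one that the lemma's title points to: the pathwise reduction $\xi\,dX_s-\lambda\{\xi(X_s-\mu)\}^2\,ds=f(Z_s,\xi)\,ds+\sigma\xi\,dW_s$ via (\ref{eq:XZ_transform}) and $\xi^2=|\xi|$ is exactly the definition (\ref{def:f}); the discounted stochastic integral is an $L^2$-bounded, hence uniformly integrable, martingale because $\delta>0$ caps its quadratic variation by $\sigma^2/(2\delta)$, so optional sampling at an arbitrary (even infinite) stopping time kills its expectation; and your explicit reading that $\xi_s\equiv\xi$ on $[t,\tau]$ is the correct interpretation, since the lemma is only ever applied on the inter-switching intervals $[\tau_i,\tau_{i+1})$ where the regime is frozen.
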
 

Using Lemma \ref{theo:martingalePart} with (\ref{eq:control_space0}) and (\ref{def:controlspaceA}), the performance criterion (\ref{eq:performanceCriteria0}) is reformulated as follows (note that $\tau_{n+1}=\infty$):
\begin{align} 
  \begin{split} 
    \valuefuncC{J}{z}{\xi}{n}{\alpha}  &=  \expectationCond{\sum_{i=0}^n\intA{\tau_i}{\tau_{i+1}}{\discount{-\delta(s-t)}{f(Z_s, \process{\xi}{\tau_i})}}{s} 
       - \sum_{\tau_i\geq t_0}^{\tau_n}\discount{-\delta(\tau_i-t_0)}K}{Z_{t_0}^-=z, \xi_{t_0}^-=\xi}. \label{criteria} 
  \end{split} 
\end{align}

From (\ref{eq:Solution-Z}), for $s\geq t$, $\expectationCond{Z_s}{t}=Z_t e^{-\theta(s-t)}$.
\begin{align} 
  \text{From } Z^2_s&=Z_t^2 e^{-2\theta(s-t)} + 2\sqrt{\theta}Z_te^{-\theta(s-t)} \intA{t}{s}{\discount{-\theta(s-u)}}{W_u}+  \theta\biggl(\intA{t}{s}{\discount{-\theta(s-u)}}{W_u}\biggr)^2, \\ 
  \expectationCond{Z^2_s}{t}&=Z_t^2 e^{-2\theta(s-t)} + \theta\intA{t}{s}{\discount{-2\theta(s-u)}}{u} =(Z_t^2-1/2) e^{-2\theta(s-t)} + 1/2.  \label{eq:Solution-Z2}  
\end{align}
 
\end{omitlevel1}

Buy-and-hold strategy is a special case of our problems without any options to switch corresponding to (\ref{eq:variationalInequality0}), i.e., $n=0$.
It is important since it represents an initial condition for the recurrence formula in 
the following family of iterative switching problems. As the feasible control space is empty ($\controlspaceAn{0}(\xi)=\phi$),
the value function (\ref{def:valueFunc_criterion}) for each $\xi\in\positionspace$ and $n=0$ is calculated as follows: 
\begin{align} 
  \begin{split} 
    \valuefuncA{\widehat{v}}{z}{\xi} & \equiv  \valuefuncB{v}{z}{\xi}{0} = \valuefuncC{J}{z}{\xi}{0}{\phi}
     = \intA{{t_0}}{\infty}{\discount{-\delta(s-{t_0})}\expectationx{z}{f(Z_s, \xi)}}{s} = -(k_2\abs{\xi}z^2+k_1 \xi z+k_0\abs{\xi}), \\
    & \text{where} \quad k_2\equiv\dfrac{\lambda\sigma^2}{\theta(2\theta+\delta)}=\dfrac{\delta}{\theta}k_0>0,  k_1\equiv \dfrac{\sqrt\theta\sigma}{\delta+\theta}>0, k_0\equiv\dfrac{\lambda\sigma^2}{\delta(2\theta+\delta)}>0. \label{eq:v0}
  \end{split} 
\end{align}
According to \citeN{suzuki2020optimal} the above model satisfies all the assumptions provided in Section \ref{sec:assumptions_problem}.

\begin{theorem}[Dynamic programming principle: \citeN{suzuki2016optimal}] \label{theo:DPP}~\\
  For any  $z\in\real{}, n\geq 0$, $m (0\leq m\leq n)$ and any stopping time $\tau$, satisfying $t_0=\tau_0\leq\tau_1\leq\cdots\leq\tau_m\leq\tau < \tau_{m+1}\leq\tau_n$, where $\{\tau_i\}$ is a series of optimal switching time. Then the following equation holds. 
\begin{align} 
  \begin{split} 
    \valuefuncB{v}{z}{\xi}{n}  
     =& \sup_{\alpha \in \controlspaceAn{n}(\xi)}  \ExpectationCond{\intA{{t_0}}{\tau}{\discount{-\delta(s-{t_0})}{f(Z_s, \process{\xi}{s})}}{s}  - \sum_{{t_0}\leq\tau_i\leq\tau}^{}\discount{-\delta(\tau_i-{t_0})}K \\
       & + \discount{-\delta(\tau-{t_0})}\valuefuncB{v}{Z_\tau}{\xi_\tau}{n-m}}{Z_{t_0}^-=z, \xi_{t_0}^-=\xi}. \label{eq:DPP} 
  \end{split} 
\end{align}
\end{theorem}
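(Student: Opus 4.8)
The plan is to prove the dynamic programming principle by establishing the two opposite inequalities between $\valuefuncB{v}{z}{\xi}{n}$ and the right-hand side of (\ref{eq:DPP}) separately. The three structural facts driving the argument are the strong Markov property of the diffusion $Z$, the time-homogeneity of its generator $\generatorx{}{}$, and the multiplicative splitting of the discount factor $\discount{-\delta(s-t_0)}=\discount{-\delta(\tau-t_0)}\discount{-\delta(s-\tau)}$, combined with the additive decomposition of the criterion $\valuefuncC{J}{z}{\xi}{n}{\alpha}$ from (\ref{def:valueFunc_criterion}) into a discounted running-reward integral and a discounted sum of switching costs.

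First I would prove the inequality $\le$. Fix an arbitrary admissible strategy $\alpha=\{\xi_{\tau_i},\tau_i\}\in\controlspaceAn{n}(\xi)$ and split its criterion at the stopping time $\tau$ into the part accrued on $[t_0,\tau)$ and the tail on $[\tau,\infty)$. By the additive structure of the running integral and the cost sum, the pre-$\tau$ part reproduces exactly the integral $\intA{t_0}{\tau}{\discount{-\delta(s-t_0)}f(Z_s,\process{\xi}{s})}{s}$ and the finite cost sum $\sum_{t_0\le\tau_i\le\tau}\discount{-\delta(\tau_i-t_0)}K$ appearing in (\ref{eq:DPP}). Conditioning the tail on $\sigmaalgebrat{\tau}$ and invoking the strong Markov property together with time-homogeneity, the post-$\tau$ restriction $\alpha'$ of $\alpha$ is an element of $\controlspaceAn{n-m}(\xi_\tau)$, so the tail equals $\discount{-\delta(\tau-t_0)}\valuefuncC{J}{Z_\tau}{\xi_\tau}{n-m}{\alpha'}\le\discount{-\delta(\tau-t_0)}\valuefuncB{v}{Z_\tau}{\xi_\tau}{n-m}$ by the definition of the value function. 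Taking expectations and then the supremum over $\alpha$ yields $\valuefuncB{v}{z}{\xi}{n}\le$ RHS.

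The reverse inequality $\ge$ is the hard part and rests on an $\varepsilon$-optimal pasting construction. Fix $\varepsilon>0$. Given any strategy $\alpha^{(1)}$ prescribing the first $m$ switches on $[t_0,\tau]$ and ending in state $(Z_\tau,\xi_\tau)$, I would select for each realized pair $(y,\eta)$ a strategy $\alpha^{(2)}_{y,\eta}\in\controlspaceAn{n-m}(\eta)$ that is $\varepsilon$-optimal for $\valuefuncB{v}{y}{\eta}{n-m}$, and concatenate it with $\alpha^{(1)}$ to obtain a single $\alpha\in\controlspaceAn{n}(\xi)$. Running the same Markov-and-discount decomposition in reverse gives $\valuefuncB{v}{z}{\xi}{n}\ge\valuefuncC{J}{z}{\xi}{n}{\alpha}\ge$ RHS $-\,C\varepsilon$, where $C<\infty$ is controlled uniformly by the linear-growth Assumption \ref{theo:finite} and the lower bound Assumption \ref{theo:boundedVf}; letting $\varepsilon\downarrow0$ closes the inequality.

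The main obstacle is ensuring that the concatenated control is again admissible: the continuation strategy $\alpha^{(2)}_{y,\eta}$ must be chosen $\sigmaalgebrat{\tau}$-measurably in $(Z_\tau,\xi_\tau)$ so that $\alpha$ is adapted to $\mathbb{F}$, while simultaneously respecting the feasibility constraint $\xi_{\tau_i}\in\controlspace(\xi_{\tau_{i-1}})$ across the junction at $\tau$. I would handle this by a measurable-selection theorem applied to the map $(y,\eta)\mapsto\alpha^{(2)}_{y,\eta}$, exploiting the separability of the control space and the measurability of $(y,\eta)\mapsto\valuefuncB{v}{y}{\eta}{n-m}$; Assumptions \ref{theo:finite} and \ref{theo:boundedVf} then guarantee that all the conditional expectations are finite and that the pasting error remains uniformly bounded.
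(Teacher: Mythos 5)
You should first note a point of order: the paper itself does not prove Theorem \ref{theo:DPP} at all --- it is imported as a known result from \citeN{suzuki2016optimal}, and no proof appears anywhere in the text. So there is no in-paper argument to compare yours against; your proposal has to be judged on its own merits. On that basis, your outline is the standard and essentially correct scheme for switching-problem DPPs: split the criterion of an arbitrary admissible strategy at $\tau$, use the multiplicative discount splitting and the strong Markov property to dominate the tail by the value function (giving $\le$), and then paste $\varepsilon$-optimal continuation strategies measurably onto an arbitrary pre-$\tau$ strategy (giving $\ge$). You also correctly identify measurable selection of $(y,\eta)\mapsto\alpha^{(2)}_{y,\eta}$ as the real technical obstacle rather than hand-waving past it.

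Two substantive points need repair, however. First, there is a bookkeeping gap in your $\le$ direction: you assert that the post-$\tau$ restriction of an arbitrary $\alpha\in\controlspaceAn{n}(\xi)$ lies in $\controlspaceAn{n-m}(\xi_\tau)$, but an arbitrary strategy makes some random number $k$ of switches on $[t_0,\tau]$, not necessarily $m$, so its tail lies in $\controlspaceAn{n-k}(\xi_\tau)$. Monotonicity of $\valuefuncB{v}{z}{\xi}{n}$ in $n$ closes the case $k\ge m$ (since then $\valuefuncB{v}{Z_\tau}{\xi_\tau}{n-k}\le\valuefuncB{v}{Z_\tau}{\xi_\tau}{n-m}$), but for $k<m$ the inequality runs the wrong way; the statement is only true under the reading that the supremum in (\ref{eq:DPP}) runs over strategies whose switching times are consistent with the constraint $\tau_m\le\tau<\tau_{m+1}$, and your write-up should say so explicitly rather than leave the quantifier ambiguous. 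Second, two smaller refinements: your pasting-error constant is simply $C=1$, since an $\varepsilon$-optimal continuation loses at most $\discount{-\delta(\tau-t_0)}\varepsilon\le\varepsilon$ --- Assumptions \ref{theo:finite} and \ref{theo:boundedVf} are needed for integrability of the conditional expectations, not for the size of the error; and because $\positionspace$ is finite and Theorem \ref{theo:continuity} gives continuity of $z\mapsto\valuefuncB{v}{z}{\xi}{n}$, you can replace the abstract measurable-selection theorem by an elementary countable partition of $\real{}$ into small intervals with one $\varepsilon$-optimal strategy per cell, which avoids any separability hypotheses on the control space that the paper never states.
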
 

\begin{omitlevel1}
From Theorem \ref{theo:DPP}, for all  $z\in \real{}, \xi\in \positionspace \text{ and } n (1\leq n\leq +\infty)$, following inequalities hold and one of them holds as equality.
\begin{align}
  &\begin{cases}
     \intertext{(i). Considering the case $m=0$, for general stopping time $\tau>t_0$, the following inequality holds: }
     \begin{split}
       \valuefuncB{v}{z}{\xi}{n}  \geq 
       &\expectationCond{\intA{{t_0}}{\tau}{\discount{-\delta(s-{t_0})}{f(Z_s, \process{\xi}{s})}}{s} 
         + \discount{-\delta(\tau -{t_0})}\valuefuncB{v}{Z_{\tau}}{\xi_\tau}{n} 
       }{Z_{t_0}=z, \xi_{t_0}=\xi}. 
     \end{split} 
     \intertext{(ii). Considering the case $m=1$ and $\tau=t_0=\underbarA{\tau}_1$, where $\underbarA{\tau}_1$ is a general switching time (not necessarily optimal),   the following inequality holds:}
     \begin{split}
       \valuefuncB{v}{z}{\xi}{n}  \geq  & \max_{\hat{\xi}\in\controlspace(\xi)}\{\valuefuncB{v}{z}{\hat{\xi}}{n-1}-K\}.
     \end{split}  
   \end{cases}  \label{eq:DynamicProgrammingN}
\end{align}
Moreover, the case (ii) is divided into the following two cases (A) and (B), depending on the optimal switching time $\tau_1$.
\\\noindent(A). $\underbarA{\tau}_1$ is the optimal switching time. ($t_0=\tau=\underbarA{\tau}_1=\tau_1$): 
From (\ref{eq:DPP}), 
\begin{align}
  \valuefuncB{v}{z}{\xi}{n}  = & \max_{\hat{\xi}\in\controlspace(\xi)}\{\valuefuncB{v}{z}{\hat{\xi}}{n-1}-K\}. \label{eq:switchingEq}
\end{align}
Conversely if equation (\ref{eq:switchingEq}) holds, there exists $\hat{\xi}\in\controlspace(\xi)$, such that 
$\valuefuncB{v}{z}{\xi}{n}  = \valuefuncB{v}{z}{\hat{\xi}}{n-1}-K$. This means the immediate switching from $\xi$ to $\hat{\xi}$ is optimal. Therefore, $t_0=\underbarA{\tau}_1=\tau_1$.
\\\noindent(B). $\underbarA{\tau}_1$ is not the optimal switching time. ($t_0=\tau=\underbarA{\tau}_1<\tau_1$): \\
From (A), in this case strict inequality:
$\valuefuncB{v}{z}{\xi}{n}  >  \max_{\hat{\xi}\in\controlspace(\xi)}\{\valuefuncB{v}{z}{\hat{\xi}}{n-1}-K\}$ holds.
%

Suppose that the current position is $\xi\in\positionspace$ at time $t$  with $n$ options to switch, then $Z_t\in\stoppingRegion{n}$ means the process $Z_t$ is on the optimal switching region and $\xi$ should be switched immediately to another position. 
If the optimal switching is  from $\xi$ to $\hat{\xi}$, we write $Z_{t}\in\stoppingRegionA{n}{\xi\hat{\xi}}$. 
$Z_{t}\in\continuationRegion{n} $ means the process $Z_{t}$ is on the continuation region and the optimal strategy suggests that the position $\xi$ should be kept at $t$.
Therefore, when $\xi_{\tau_i}=\xi\; (i\geq 0)$, the optimal switching time $\tau_{i+1} $ after $\tau_i$ is formally expressed as:
\begin{align}
  \tau_{i+1}&=\inf\{s\geq \tau_i|Z_s\in\stoppingRegion{n-i}\} \; (i\geq 0). \label{eq:tau}
\end{align}  
The value function of the switching problem is constructed by the recurrence formula as in the bottom formula of (\ref{def:stoppingContinuousRegion}). Suppose that you have $n$ options to switch positions, then after switching once optimally, the resulting problem is the same problem with $n-1$ options to switch.
\end{omitlevel1}

\begin{omitlevel3}
\begin{itemize}
\item[(i)] Non-empty condition:\\
  According to Lemma \ref{theo:nonempty}, for $\hat{\xi}\in\controlspace(\xi), \xi\in\positionspace$,
  \begin{align}
    \hat{v}(z, \xi)\leq\hat{v}(z, \hat{\xi})-K \Longleftrightarrow
    k_2(\abs{\hat{\xi}}-\abs{\xi})z^2+k_1({\hat{\xi}}-{\xi})z+\{k_0(\abs{\hat{\xi}}-\abs{\xi})  +K\}\leq 0. \label{ineq:non-empty2}
  \end{align}
  \begin{itemize}
  \item If $({\xi}, \hat{\xi})=(\pm1, 0)$, \\
    (\ref{ineq:non-empty2}) is  
    \begin{align}
      -k_2z^2\mp k_1z+(K-k_0)  \leq 0,
    \end{align}
    which has a solution $z\in\real{}$ as $k_2>0$.
    Therefore $\stoppingRegionA{n}{\pm1}\ne\phi, \forall n\geq 1$ from Lemma \ref{theo:nonempty}.
    
  \item If $({\xi}, \hat{\xi})=(0, \pm1)$,\\
    (\ref{ineq:non-empty2}) is 
    \begin{align}
      k_2z^2\pm k_1z+(K+k_0)  \leq 0, \label{ineq:non-emptyii}
    \end{align}
    whose determinant is $D\equiv k_1^2-4\delta k_0(K+k_0)/\theta$.
    If $D<0$ then the left hand side of the inequality (\ref{ineq:non-emptyii}) does not have the points of intersection with the horizontal $z$-axis. Therefore we assume the case:
    \begin{align}
      D\geq 0\Longleftrightarrow 4\lambda(K+k_0)\leq \theta^2(2\theta+\delta)/(\delta+\theta)^2. \label{cond:D}
    \end{align}
    In this case $\stoppingRegionA{n}{0}\ne\phi, \forall n \geq 1$. This condition will be examined for particular values of parameters in (\ref{rel:lambda_b_D}).
  \end{itemize}
  
\item[(ii)] Reference regions:\\
  We identify the set $\stoppingRegionCandidate{\xi\hat{\xi}}{1}$.
  \begin{align}
    z\in\stoppingRegionCandidate{\xi\hat{\xi}}{1} &\Longleftrightarrow 
    f(z, \hat{\xi})-f(z, \xi)\geq \delta K \Longleftrightarrow (\abs{\xi}-\abs{\hat{\xi}})\frac{\lambda\sigma^2}{\theta}z^2+({\xi}-{\hat{\xi}})\sqrt{\theta}\sigma z-\delta K\geq 0  \label{ineq:refRegion}
  \end{align}
  \begin{itemize}
  \item If $({\xi}, \hat{\xi})=(\pm1, 0)$,\\
    then (\ref{ineq:refRegion}) is equivalent to:
    \begin{align}
      (z-p(\mp1+\sqrt{1+q}))(z-p(\mp1-\sqrt{1+q}))\geq 0,
    \end{align}
    where $q:=4\delta\lambda K/\theta^2>0$ and $p:=\theta^{(3/2)}/(2\lambda\sigma)$. Therefore 
    \begin{align}
      \stoppingRegionCandidate{\pm1 0}{1}=\big(-\infty, p(\mp1-\sqrt{1+q})\big] \cup \big[p(\mp1+\sqrt{1+q}), +\infty\big). \label{regionQ1} 
    \end{align}
    
  \item If $({\xi}, \hat{\xi})=(0, \pm1)$ and $q\leq 1$,\\
    then (\ref{ineq:refRegion}) is equivalent to:
    \begin{align}
      (z-p(\mp1+\sqrt{1-q}))(z-p(\mp1-\sqrt{1-q}))\geq 0.
    \end{align} 
    In this case, if $q>1$, then from Lemma \ref{prop:candidateRegion} $\stoppingRegionCandidate{0 \pm1}{1}=\phi\Rightarrow \continuationAfterSwitchingRegionA{n,1}{0\:\pm1}=\phi$,
    また$\xi=0$からの連続同時スイッチはないため$\continuationAfterSwitchingRegionA{n,1}{0\:\pm1}=\stoppingRegionA{n}{0}=\phi$, $\continuationRegionA{n}{0}=\real{}\; (n\geq 0)$. つまりコスト$K$が巨大で$q>1$の場合、一旦$\xi=0$に遷移すると、$n$に関わらず再度$\xi=\pm1$へとポジション構築されなくなる。
    therefore we assume otherwise, i.e.,
    $q\leq 1$のとき、パラメータの前提としては、
    
    \begin{align}
      \lambda K\leq\theta^2/(4\delta)\Longleftrightarrow q\leq 1. \label{cond:q}
    \end{align}
    であるが、このとき式(\ref{ineq:refRegion})は次式と同値になる(複号同順)。
    \begin{align}
      (z-p(\mp1+\sqrt{1-q}))(z-p(\mp1-\sqrt{1-q}))\geq 0\nonumber
    \end{align}
    In this case, 
    \begin{align}
      \stoppingRegionCandidate{0 \pm1}{1}=\big[p(\mp1-\sqrt{1-q}), p(\mp1+\sqrt{1-q})\big] \; \cdots\; \text{(double sign in the same order)}.\label{regionQ2} 
    \end{align}
  \end{itemize}
    
\item[(iii)] Assumption \ref{assumption:structureSwitchingRegion}成立の検証:\\
  
  We continue to assume $q\leq 1$.
  From $\sqrt{1+q}-1<1-\sqrt{1-q}, q\in(0, 1]$, the relations:
  \begin{align}
    \stoppingRegionCandidate{1 0}{1}\supset\stoppingRegionCandidate{0 -1}{1} \text{ and } \stoppingRegionCandidate{-1 0}{1}\supset\stoppingRegionCandidate{0 1}{1} \label{relation:Q}
  \end{align}
  hold (See Figure \ref{chart:example_quadratic}).
  For $m\geq 1$, all the permutations of the full $m$-simultaneous switchings in this example are listed as follows.
  \begin{align}
    m=1:& (\xi^{(0)}, \xi^{(1)})\in \{(0, \pm1), (\pm1, 0)\}\\
    m=2:& (\xi^{(0)}, \xi^{(1)}, \xi^{(2)})\in \{(1, 0,-1), (-1, 0, 1)\}
  \end{align}
  Therefore from Proposition \ref{prop:sufficientCond} with (\ref{relation:Q}), Assumption \ref{assumption:structureSwitchingRegion} (ii), i.e.,
  \begin{align}
    \stoppingRegionCandidate{\xi\hat{\xi}}{1}\supset \stoppingRegionA{n}{\xi\hat{\xi}},\quad \forall \hat{\xi}\in\controlspace(\xi), \forall\xi\in\positionspace
  \end{align}
  is satisfied.
  On the other hand, when $\xi=0$, $\stoppingRegionCandidate{0-1}{1}\cap\stoppingRegionCandidate{0 1}{1}=\phi$. When $\xi=\pm1$, $\{\controlspace(\xi)\}$ is singleton respectively,
  therefore Assumption \ref{assumption:structureSwitchingRegion} (i) is also satisfied. See Figure \ref{chart:example_quadratic} for more understanding.
  
\item[(iv)] 連結スイッチ領域の特定:\\
  Next we consider Corollary \ref{coro:structureSwitchingRegion}.
  From  (\ref{regionQ2}), $\stoppingRegionCandidate{0\pm1}{1}$ is a single region respectively, and from (\ref{regionQ1}) each $\stoppingRegionCandidate{\pm1 0}{1}$ consists of two regions, respectively. 
  Therefore we define:
  \begin{align}
    \stoppingRegionCandidate{\pm1 0\{-\}}{1}\equiv\big(-\infty, p(\mp1-\sqrt{1+q})\big], \label{def:regionQ10-}\\
\stoppingRegionCandidate{\pm1 0\{+\}}{1}\equiv\big[p(\mp1+\sqrt{1+q}), +\infty\big). \label{def:regionQ10+}
  \end{align}
  ここで$\stoppingRegionCandidate{\pm1 0}{1}=\stoppingRegionCandidate{\pm1 0\{+\}}{1} \oplus \stoppingRegionCandidate{\pm1 0\{-\}}{1}$ (複号同順).
  Assumption \ref{assumption:structureSwitchingRegion}の成立により
  Then from Corollary \ref{coro:structureSwitchingRegion}, each set of 
  \begin{align}
    &\stoppin1gRegionA{n}{0\pm1}\subset  \stoppingRegionCandidate{0\pm1}{1} \label{in:S01}\\
    &\stoppingRegionA{n}{\pm1 0\{-\}}\subset  \stoppingRegionCandidate{\pm1 0\{-\}}{1}\label{in:S10-}\\
    &\stoppingRegionA{n}{\pm1 0\{+\}}\subset  \stoppingRegionCandidate{\pm1 0\{+\}}{1}\label{in:S10+}
  \end{align}
  is a single connected region respectively, i.e., there are 6 (connected) regions over all $\xi\in\{0, \pm1\}$.
  The structure of the switching regions of the above considered example is illustrated in Figure \ref{chart:example_quadratic}.
\end{itemize} 
  
Note that similar to the symmetric property of the value function in Lemma \ref{theo:symmetryVf}, the optimal strategies are also proved to be symmetric.
The reference regions $\stoppingRegionCandidate{0 \pm1}{1}$ and $\stoppingRegionCandidate{\pm1 0}{1}$ in (\ref{regionQ2}), (\ref{def:regionQ10-}) and (\ref{def:regionQ10+}) also have the symmetric properties and  the corresponding optimal switching regions also have.
\end{omitlevel3}
  
\begin{omitlevel1}
\begin{figure}[htbp]
  \begin{center}
    \includegraphics[width=12cm,bb=80 480 520 740,clip,angle=0]{quadratic_regions.eps}    
    \caption{The structure of the reference regions in Example \ref{ex:identifyingSwitchingRegions2}}
    \label{chart:example_quadratic}
  \end{center}
  \begin{flushleft}
    Each of the  6 reference regions $\stoppingRegionCandidate{0 \pm1}{1}$ and $\stoppingRegionCandidate{\pm1 0\{\pm\}}{1}$ in (\ref{in:S01}), (\ref{in:S10-}) and (\ref{in:S10+}) includes at most only one connected switching region of $\stoppingRegionA{n}{0\pm1}$, $\stoppingRegionA{n}{\pm1 0\{\pm\}}$, respectively in it.
    From Lemma \ref{theo:symmetryVf},    all the regions have the symmetric properties.
    The graphs show the case under the conditions of (\ref{cond:D}) and (\ref{cond:q}).
    This is the new result of the application of our main theorems in this paper to our particular problem with the quadratic running function in the objective function.
  \end{flushleft}
\end{figure}
\end{omitlevel1}

\begin{omitlevel2}
証明の過程で以下のような注意が必要である。
\begin{remark}[有限・無限連結領域に関わる未知数と方程式数] \label{rem:unknown_parameters}~\\
  其々の連結部分継続領域に関して、領域が有界の場合、その各境界両端点で成立する0階、1階の計4個の境界条件が成立し、その4元連立方程式から、両自由境界位置及び2階線形微分方程式の2個の任意定数、合計4個の未知数を特定するような非線型連立方程式を解くことになる。

  一方、領域が有界でない場合には、有界でない側の領域の端点は$\pm\infty$という既知数になり、また式(\ref{lim:infty})から任意定数1つが確定するため、未知数が2個減る一方で、有界でない側の端点で上記境界条件が成立しない分、条件式の本数も2本減ることになり、方程式の数と未知数の数の個数の関係は整合的である。
\end{remark}

\begin{remark}[自由境界による区分継続領域上の古典解の一意性] \label{rem:freeboundary_classical_system_uniqueness}~\\
Theorem \ref{theo:systemequation_viscosity}により、各区分連結継続領域上の微分方程式の古典的一般解から直接粘性解を求める手順が明らかになった。
これにより、図 \ref{diagram:VS_cond}で問題となっていた、必要条件のみから粘性解を特定する際に生じた複数根から真の解を特定する煩雑さから解放されることになった。
すなわち、手順通りに行った結果、最初に発見された、条件を満たす根から構成された解が唯一の解となり、それがそのまま求める値関数となり、その時一緒に求められた継続領域、スイッチ領域が最適戦略を表現していることになる。
他の解の可能性に悩む必要はもはやなくなった。
\end{remark}
\end{omitlevel2}

\subsection{Calculation of the Piecewise Classical Solution} 

\def\CC#1#2{C^{#1, #2}_n}
\def\C#1{C^{#1}_n}


\subsubsection{The Classical General Solution on the Continuation Region} 

\begin{omitlevel2}
求める値関数$\vf$は変分不等式(\ref{eq:variationalInequality})の粘性解であったが、その変分不等式に含まれる微分方程式(\ref{eq:PDE_callasical_0})の一般解をここで求める。
その微分方程式は次のような非斉次微分方程式として表わされる。
\begin{align}
  &\DEOUnonhomo{V}{z}{\xi}=0 \quad \cdots (\xi\in\positionspace) \label{eq:DEOUnonhomo1}
\end{align}
この非斉次微分方程式に対応する斉次微分方程式はHermite 微分方程式である。
Hermite微分方程式の解として自由度$\nu$のHermite関数$\HermiteNu{z}$が知られている(\citeN{Lebedev:72}, \citeN{Seaborn:91}参照。)。

この関数$\HermiteNu{z}$の積分表示形式は、
\begin{align} 
  \begin{split} 
    \HermiteNu{z}&=\HermiteNuIntegralRepresentation{z}\\
    &=\HermiteNuIntegralRepresentationA{z} >0,  (z\in \real{}, \nu<0).
  \end{split}  \label{def:Hermite_integral}
\end{align} 
ただし二階線形微分方程式の一般解を構成するためには、基底解すなわち線形独立な2個の基本解の組を特定しなくてはならない。
式(\ref{def:Hermite_integral})で与えられるエルミート関数は第一種エルミート関数と呼ばれているが、これと対を成す線形独立な関数としては、一般的には第二種エルミート関数が用いられるのが普通である
\footnote{第一種エルミート関数は合流型超幾何関数を用いて式(\ref{eq:def_Hermite})のように表されるが、第二種エルミート関数も同様に合流型超幾何関数を用いて表現される。}。 
つまり\{第一種エルミート関数, 第二種エルミート関数\}という組による基底解である。この基底解を用いる場合、あらゆる$\nu$の値に対して両基本解は互いに線形独立となる。
しかし、本稿ではこの基底解を用いず、代わりに、より単純な基底解を用いる。
微分方程式(\ref{eq:DEOUnonhomo1})の独立変数$z$の代わりに$-z$を独立変数とするような微分方程式が同一の微分方程式になることから、$\HermiteNu{-z}$も同一の微分方程式の解になる。
すなわち、第一種エルミート関数のみを用いて、
\begin{align}
  &\{\HermiteNu{z}, \HermiteNu{-z}\} \label{frm:generalSolution}
\end{align}
を基底解とする。本稿ではこちらを用いて一般解を形成している。
この場合、両者の線形独立性確認のためのロンスキー行列は、
\begin{align}
  &\Wronskian{\HermiteNu{z}, \HermiteNu{-z}}=\dfrac{2^{\nu+1}\sqrt{\pi}}{\Gamma(-\nu)}e^{z^2} \neq 0, (\nu \neq 0,1,\cdots)\nonumber
\end{align}
であり、$\nu$が非負整数でなければ式(\ref{frm:generalSolution})が2階線型微分法定式(\ref{eq:DEOUnonhomo1})の斉次方程式の線形独立な基本解を形成しているといえる。
本稿の問題では常に$\nu\equiv -\dfrac{\delta}{\theta}<0$であるため線形独立性は担保されている。
従って、本稿はこの基底(\ref{frm:generalSolution})を、微分方程式(\ref{eq:DEOUnonhomo1})の斉次方程式の一般解として用いる。
一般解の基底関数(\ref{frm:generalSolution})は単調増加関数と減少関数の組合せになっている。

\end{omitlevel2}

The classical general solution on the continuation region to the PDE:
\begin{align}
  \delta V -\generator{V}-f(z, \xi)=0 \Longleftrightarrow 
  \DEOUnonhomo{V}{z}{\xi}=0 \quad \cdots (\xi\in\positionspace) \label{eq:DEOUnonhomo2}
\end{align}
is formulated as (\ref{eq:nonhomo_general_sol}).
Since $\widehat{v}$ is a particular solution to the differential equation (\ref{eq:DEOUnonhomo2}), the general solution of this non-homogeneous equation is formulated as follows: for $n\geq 0$ and $\xi\in\positionspace$,
\begin{align}
  &\vf=\CC{\xi}{+}\HermiteNu{z}+\CC{\xi}{-}\HermiteNu{-z}+  \vfhat, \cdots  z\in\continuationRegion{n}, \label{eq:nonhomo_general_sol}
\end{align}
where $\{\CC{\xi}{+}, \CC{\xi}{-} \}, \; \xi\in\positionspace$ are the arbitrary constants.
Substituting the above general solution to the PDE for the formula (\ref{eq:DEclassical}), the value function is fomulated as follows.
For each $n \geq 1, \xi\in\positionspace$,
\begin{align}
  \valuefuncB{v}{z}{\xi}{n}=
  &\begin{cases}
    \CC{\xi}{+}\HermiteNu{z}+\CC{\xi}{-}\HermiteNu{-z}+  \vfhat, & z\in\continuationRegion{n}, \\
    \displaystyle\max_{\hat{\xi}\in\controlspace(\xi)}\{\valuefuncB{v}{z}{\hat{\xi}}{n-1}-K\}, & z\in \stoppingRegion{n}.
  \end{cases} \label{eq:generalsolutionN}
\end{align}

In order to identify free boundaries, see \citeN{suzuki2020optimal} example 4.2.

\begin{omitlevel2}
ここでHermite 関数の基本的性質を示す。
$\nu<0$ のときHermite 関数は以下の性質を持つ。
\begin{align} 
  &\HermiteNu{z} > 0 \quad (z\in\real{}) \qquad \nonumber\\
  &\HermiteNu{z} \to 0 \quad (z\to\infty) \qquad  \nonumber\\
  &\HermiteNu{z} \to \infty \quad  (z\to-\infty)   \label{lim:H-} \\
  &\HermiteNu{z}'= 2\nu\Hermite{\nu-1}{z}<0 \cdots  \cdots \text{(単調減少性)}  \nonumber\\
  &\HermiteNu{z}''= 4\nu(\nu-1)\Hermite{\nu-2}{z}>0  \cdots \cdots \text{(凸性)}  \nonumber\\
  &\HermiteNu{z} \in \continuousFunction{\infty}(\real{})  \cdots  \cdots \text{(無限回可微分性)}  \nonumber\\
  &\HermiteNu{0} =\dfrac{2^\nu\Gamma(\frac{1}{2})}{\Gamma(\frac{1-\nu}{2})}  \nonumber\\ 
  &\HermiteNu{z}-\HermiteNu{-z}<0 \; (z>0) \label{ineq:Hermite} 
\end{align} 
数値計算用に以下の表現が役立つ。
\begin{align} 
 \HermiteNu{z}&=\dfrac{1}{2\Gamma(-\nu)}\sum_{m=0}^\infty\dfrac{(-1)^m\Gamma(\frac{m-\nu}{2})}{m!}(2z)^m, \abs{z}<\infty  \label{eq:HermiteCalc}
\end{align} 
詳細は\citeN{Lebedev:72}等を参照。
\end{omitlevel2}

\subsubsection{Application of Algorithm \ref{algo:system_freeBoundary} to the Example}\label{sec:Application of Algorithm}

We will apply Theorem \ref{theo:piecewiseClassicalViscositySolution} with Algorithm \ref{algo:system_freeBoundary}  to the example.
According to Algorithm \ref{algo:system_freeBoundary},  for each regime $\xi\in\positionspace$, for each pair of adjacent reference regions 
$(\stoppingRegionCandidate{\xi\hat{\xi}\{\alpha_i\}}{1}, \stoppingRegionCandidate{\xi\bar{\xi}\{\alpha_j\}}{1})$ and their containing continuation region  $\continuationRegionA{n}{\xi\{\beta\}}, \beta\in\Xi$,

Within the domain of the function $v$ in (\ref{eq:generalsolutionN}), the classical non-homogeneous general solution on each part of the connected continuation region $\continuationRegionA{n}{\xi\{\beta\}}\subset\continuationRegionA{n}{\xi} (\xi\in\positionspace, {\beta\in\Xi})$ is formulated as follows.
For each $n, \xi\in\positionspace, \beta\in\Xi$,
\begin{align}
  u_n^{\xi\{\beta\}}(z)&\equiv\CC{\xi}{+\{\beta\}}\HermiteNu{z}+\CC{\xi}{-\{\beta\}}\HermiteNu{-z}+  \vfhat, & z\in\continuationRegionA{n}{\xi\{\beta\}}. \label{eq:general_u0}
\end{align}
If we denote each boundary of the connected continuation region $\continuationRegionA{n}{\xi\{\beta\}}$ as
\begin{align}
  z^{\partial\{-\}}\equiv\partial\stoppingRegionA{n}{\xi\hat{\xi}\{\alpha_i\}}(+), \\
  z^{\partial\{+\}}\equiv\partial\stoppingRegionA{n}{\xi\bar{\xi}\{\alpha_j\}}(-), 
\end{align}
where $z^{\partial\{-\}}< z^{\partial\{+\}}$.
If the both boundaries are finite, from (\ref{in:bar z in}),
\begin{align}
  \exists\, m_1, m_2\geq 1,
  \begin{cases}
    z^{\partial\{-\}}&\in\partial\continuationRegionA{n}{\xi\{\beta\}}\cap\continuationRegionA{n-m_1}{\hat{\xi}^{(m_1)}\{-\}},\\
    z^{\partial\{+\}}&\in\partial\continuationRegionA{n}{\xi\{\beta\}}\cap\continuationRegionA{n-m_2}{\bar{\xi}^{(m_2)}\{+\}},
  \end{cases}
  \qquad \cdots \quad \hat{\xi}, \bar{\xi}\in\controlspace(\xi), \xi\in\positionspace, {\beta\in\Xi}, \nonumber
\end{align}
Using the bottom equation of (\ref{eq:generalsolutionN}), the necessary boundary conditions in Theorem \ref{theo:C1 cond at connection} are the following smooth pasting conditions.
If the both boundaries are finite, for each boundary, 
\begin{align}
  z^{\partial\{-\}}: &  
  \begin{cases}
    u_{n}^{\xi\{\beta\}}   (z^{\partial\{-\}})&=u_{n-m_1}^{\hat{\xi}^{(m_1)}\{-\}}   (z^{\partial\{-\}})- m_1 K \\
    u_{n}^{\xi\{\beta\}}{'}(z^{\partial\{-\}})&=u_{n-m_1}^{\hat{\xi}^{(m_1)}\{-\}}{'}(z^{\partial\{-\}})
  \end{cases},\qquad z^{\partial\{-\}}\in\stoppingRegionCandidate{\xi\hat{\xi}\{\alpha_i\}}{1}\cap\continuationRegionA{n-m_1}{\hat{\xi}^{(m_1)}\{-\}}, \label{eq:smooth_pasting_condition_j}\\
  z^{\partial\{+\}}: & 
  \begin{cases}
    u_{n}^{\xi\{\beta\}}   (z^{\partial\{+\}})&=u_{n-m_2}^{\bar{\xi}^{(m_2)}\{+\}}   (z^{\partial\{+\}})- m_2 K \\
    u_{n}^{\xi\{\beta\}}{'}(z^{\partial\{+\}})&=u_{n-m_2}^{\bar{\xi}^{(m_2)}\{+\}}{'}(z^{\partial\{+\}})
  \end{cases},\qquad z^{\partial\{+\}}\in\stoppingRegionCandidate{\xi\bar{\xi}\{\alpha_j\}}{1}\cap\continuationRegionA{n-m_2}{\bar{\xi}^{(m_2)}\{+\}}. \label{eq:smooth_pasting_condition_k}
  \intertext{Moreover, at the same time from (\ref{def:C_u}), the following should be satisfied.}
  & \forall\hat{\xi}\in\controlspace(\xi), u_{n}^{\xi\{\beta\}} > v(z, \hat{\xi}, n-1)  - K  \text { on }\continuationRegionA{n}{\xi\{\beta\}} \label{ineq:VI inequality cond}
\end{align}
The four undetermined coefficients of  the above system of four equations are 
\begin{align}
  \CC{\xi}{\pm\{\beta\}},
  z^{\partial\{-\}},
  z^{\partial\{+\}}\nonumber
\end{align}

According to Algorithm \ref{algo:system_freeBoundary}, 
if the left-most reference region is finite, solve another free boundary problem with the infinite continuation region on its immediate left region of the left-most reference region including $-\infty$ with the asymptotic condition.
In the same way, if the right-most reference region is finite, solve the free boundary problem with the infinite continuation region on its immediate right region of the right-most reference region including $\infty$ with the asymptotic condition.

On the other hand, if the boundary $z^{\partial\{-\}}$ or $z^{\partial\{+\}}$ is infinite, the corresponding boundary condition is the following asymptotic condition:
\begin{align}
  \valuefuncB{v}{z}{\xi}{n} \to \valuefuncB{v}{z}{\xi}{0}=\vfhat\quad (z\to\pm\infty),\qquad  \xi\in\positionspace, n=1, 2, \cdots \label{lim:infty} 
\end{align}
This condition corresponds to the case where the action of the continuation is the optimal when $z$ approaches $\pm\infty$. In this case the value function with $n$ switches approaches the value function with zero switches.
If $z^{\partial\{-\}} \text{ or } z^{\partial\{+\}}=\pm\infty$,
as $\CC{\xi}{\mp\{\beta\}}=0$, for each $\{\beta\}\in\Xi$,
\begin{align}
  u_n^{\xi\{\beta\}}(z)&=\CC{\xi}{\pm\{\beta\}}\HermiteNu{\pm z}+\vfhat, \quad z\in\continuationRegionA{n}{\xi\{\beta\}} \text{ (double sign in the same order) } \label{eq:general_u_infty}
\end{align}

From Theorem \ref{theo:piecewiseClassicalViscositySolution}, if the above solution is found, the associated function on $\real{}$ composed of the connected piecewise classical solutions results in the unique viscosity solution which coincides with the value function.

\begin{omitlevel1}
また、微分方程式の一般解及び境界条件を得ただけでは、最適解に関する状態変数空間上の継続領域・スイッチ領域の構造は明らかにならず、
条件を満たす領域を具体的に求めてみる必要がある。
本稿では、具体的な問題に対する具体的な解の例として２つの最適ペア運用戦略問題を取り上げる。１つは、
\citeN{suzuki2016optimal}で取り上げられているExample \ref{ex:identifyingSwitchingRegions1}で、他方は\citeN{suzuki2020optimal}で取り上げられているExample \ref{ex:identifyingSwitchingRegions2}である。後者は前者の線型評価関数を二次関数に拡張したモデルである。つまりリスク回避係数を$\lambda=0$とする前者は、それを$\lambda\geq 0$とする後者の特別な場合と捉えることができる。それ以外のパラメータは共通である。

\subsubsubsection{エルミート関数とその実装} ~\\

解析解という言葉は、通常、代数関数や初頭関数、特殊関数などの公知の関数を使って表現できる解という意味で用いられることが多く、単なる収束冪級数を表すいわゆる解析関数よりは初等の関数であるべきである。
本稿の取り扱う一般解で用いられている基底関数$\{\HermiteNu{z}, \HermiteNu{-z}\}$は、合流型超幾何関数(confluent hypergeometric function)${}_1F_1$ を用いて表現される次のような自由度$\nu$のエルミート関数(Hermite function)$\HermiteNu{z}$である(\citeN{Lebedev:72}, P. 285)。
\begin{align}
    \HermiteNu{z}&=\dfrac{2^\nu\Gamma\left(\frac{1}{2}\right)}{\Gamma\left(\frac{1-\nu}{2}\right)} {}_1F_1 \left(-\frac{\nu}{2}, \frac{1}{2}; z^2\right) + \dfrac{2^\nu\Gamma\left(-\frac{1}{2}\right)}{\Gamma\left(-\frac{\nu}{2}\right)}z {}_1F_1 \left(\frac{1-\nu}{2}, \frac{3}{2}; z^2\right)\label{eq:def_Hermite} 
\end{align}
ただし、合流型超幾何関数${}_1F_1$は、
\begin{align}
    {}_1F_1(\alpha, \gamma; z)&=\sum_{k=0}^\infty\dfrac{(\alpha)_k}{(\gamma)_k}\dfrac{z^k}{k!}, \abs{z}<\infty, \gamma\ne0, 1, 2, \cdots,  \label{def:confluent} \\
    (\lambda)_k&=\dfrac{\Gamma(\lambda+k)}{\Gamma(\lambda)}=\lambda(\lambda+1)\cdots(\lambda+k-1), k=1, 2, \cdots, \nonumber\\
    (\lambda)_0&=1\nonumber
\end{align}
で与えられる関数である。
超幾何関数は冪級数を用いて定義されており解析関数ではあるが、初等関数では表されず、公知と呼べるほど性質は明らかでない。
この関数の値を求めるためには冪級数計算のための数値計算を行う必要があり、その意味においては、数値解に近い解析解であるといえる。
\end{omitlevel1}

\section{Calculation of the Value Function by a Computer} \label{sec:VF_algorithm_datastr}
Following the algorithm presented in Section \ref{sec:Application of Algorithm}, we can calculate the value functions, which are composed only of piecewise classical analytical functions, even though the solutions are in the sense of viscosity. We calculate the value functions by a computer and plot the graphs in Figure \ref{chart:nonlinear_basic}, showing the relations between value functions $v(z, \xi , n), \xi=\{-1, 0, 1\}$ before ($n=1$) and after ($n=0$) the switching. 
The transition between the value functions $\valuefuncB{v}{z}{\xi}{n} \to \valuefuncB{v}{z}{\hat{\xi}}{n-1}$ is illustrated.
\begin{figure}[htbp]
\begin{center}
  \includegraphics[scale=0.7]{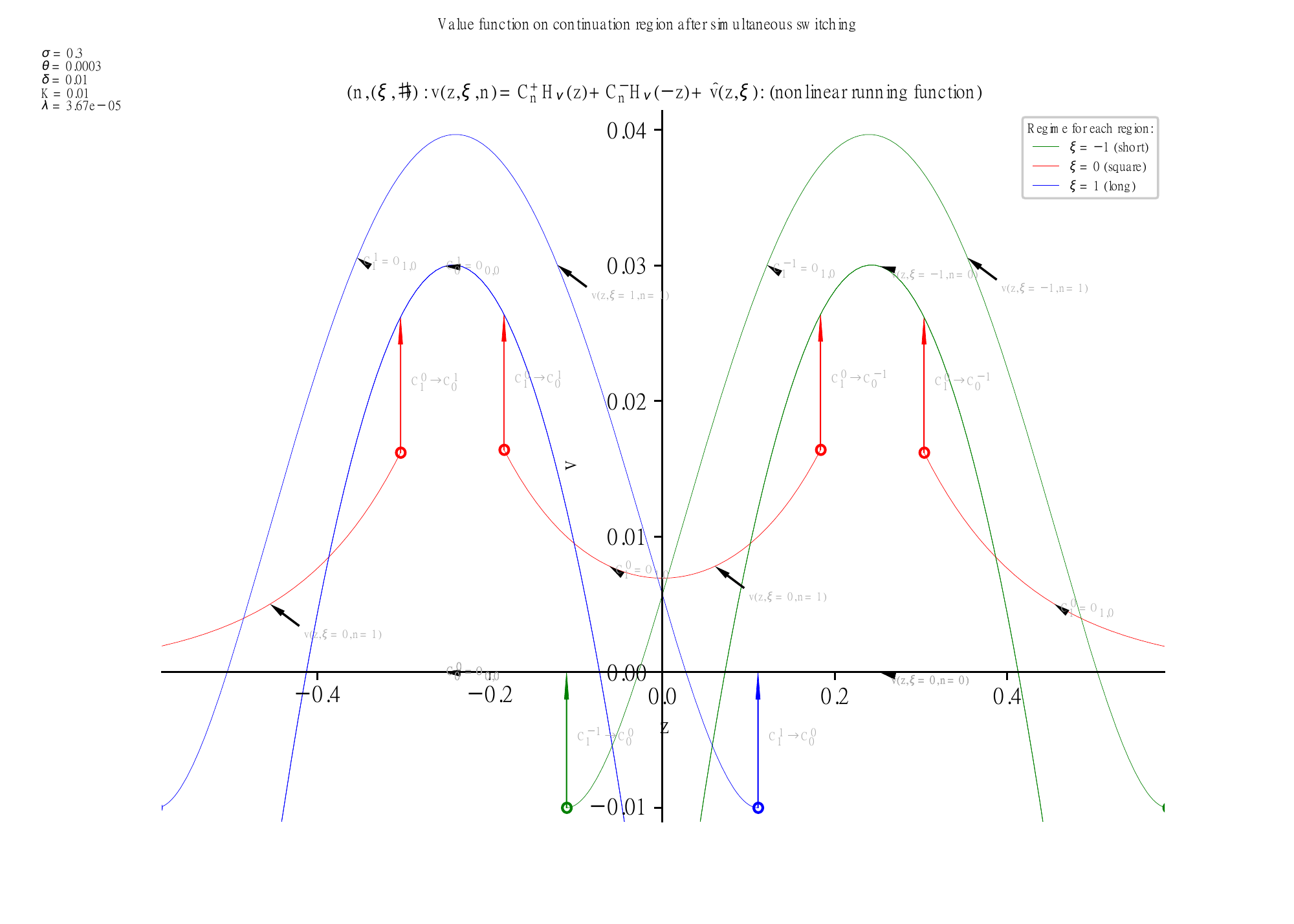}   
  \caption{Relations between value functions $v(z, \xi , n), \xi=\{-1, 0, 1\}$ before and after the switching ($n=1\to 0$)}
 \label{chart:nonlinear_basic}
\end{center}
\end{figure}

In this chart, each value function $v(z, \xi , 1), \xi=\{-1, 0, 1\}$ over the continuation region is shown transitioning to $v(z, \hat{\xi} , 0)=\hat{v}(z, \hat{\xi}), \hat{\xi}=\{-1, 0, 1\}$ through regime switching from $\xi$ to $\hat{\xi}\in\controlspace(\xi)$.
This geometric-analytic relationship corresponds to the smooth pasting conditions—i.e., $C^1$-class continuity conditions connecting the functions before and after switching—at the connection represented by equation~(\ref{eq:generalsolutionN}) on $\stoppingRegion{n}$ on the condition of equations~(\ref{eq:smooth_pasting_condition_j}) and~(\ref{eq:smooth_pasting_condition_k}).

The figure is color-coded according to the three regimes.
The value function for $\xi=0, n=0$ coincides with the horizontal axis and is thus not visible.
Vertical arrows indicate the points at which switching occurs, and the length of each upward vector corresponds to the transaction cost.

According to the definition of the value function, it includes the expected cost from the current point onward, but past realized costs are excluded at the moment the event passes.
Therefore, transaction costs are embedded in the value function as expectations just before switching, but once paid, they are removed as past events, resulting in an instantaneous upward jump in the value function equal to the transaction cost.

For example, the central red curve in Figure \ref{chart:nonlinear_basic} represents the value function in the continuation region $\continuationRegionA{1}{0}$ for $\xi=0, n=1$.
At the boundaries of this region, switching occurs to $\hat{\xi}=-1$ (green) on the right, or $\hat{\xi}=1$ (blue) on the left, transitioning to the continuation region $\continuationRegionA{0}{\hat{\xi}}=\real{}$ for $n=0$.
Note that $v(z, \hat{\xi} , 0)=\hat{v}(z, \hat{\xi}),\; \hat{\xi}\in\positionspace$ (defined on equation (\ref{eq:v0})) is defined over the entire real line $\real{}$ as its continuation region.

The conditions for switching follow the procedures given in Theorem~\ref{theo:piecewiseClassicalViscositySolution}:
\begin{enumerate}
  \item According to the boundary condition in equations~(\ref{eq:smooth_pasting_condition_j}) and~(\ref{eq:smooth_pasting_condition_k}), the curve $v(z, \xi , 1)$ shifted upward by $K$ coincides with and is tangent to the corresponding curve $v(z, \hat{\xi} , 0)$ at the switching point (i.e., satisfies a $\mathcal{C}^1$-class condition).
  \item According to equation~(\ref{ineq:VI inequality cond}), the upward-shifted curve $v(z, \xi , 1) + K$ does not fall below any other subordinate value function $v(z, \hat{\xi} , n-1)$ for all $\hat{\xi}\in\controlspace(\xi)$.
\end{enumerate}

These conditions are confirmed to hold for each of the three disjoint continuation regions $\continuationRegionA{n}{\xi\{\beta_1\}}, \continuationRegionA{n}{\xi\{\beta_2\}}, \continuationRegionA{n}{\xi\{\beta_3\}}$ comprising $\continuationRegionA{n}{\xi}$ with $\xi=0, n=1$.
Similar confirmations can be made for all other regimes $\xi=\pm 1, n=1$.

Thus, the procedure for constructing $v(z, \xi , 1), \xi\in\positionspace$ from the known $v(z, \xi , 0)=\hat{v}(z, \xi)$ is verified.
Since $v(z, \xi , 0)$ is given, the recursive procedure to obtain the value function for any $n \geq 0$ is also confirmed.

In this way, using the switching relation in equations~(\ref{eq:smooth_pasting_condition_j}) and~(\ref{eq:smooth_pasting_condition_k}), the complete value function graph—including values in switching regions—over the continuation region is presented in Figure~\ref{chart:nonlinear_basic_vf}.
\begin{figure}[htbp]
  \begin{center}
    \includegraphics[scale=0.7]{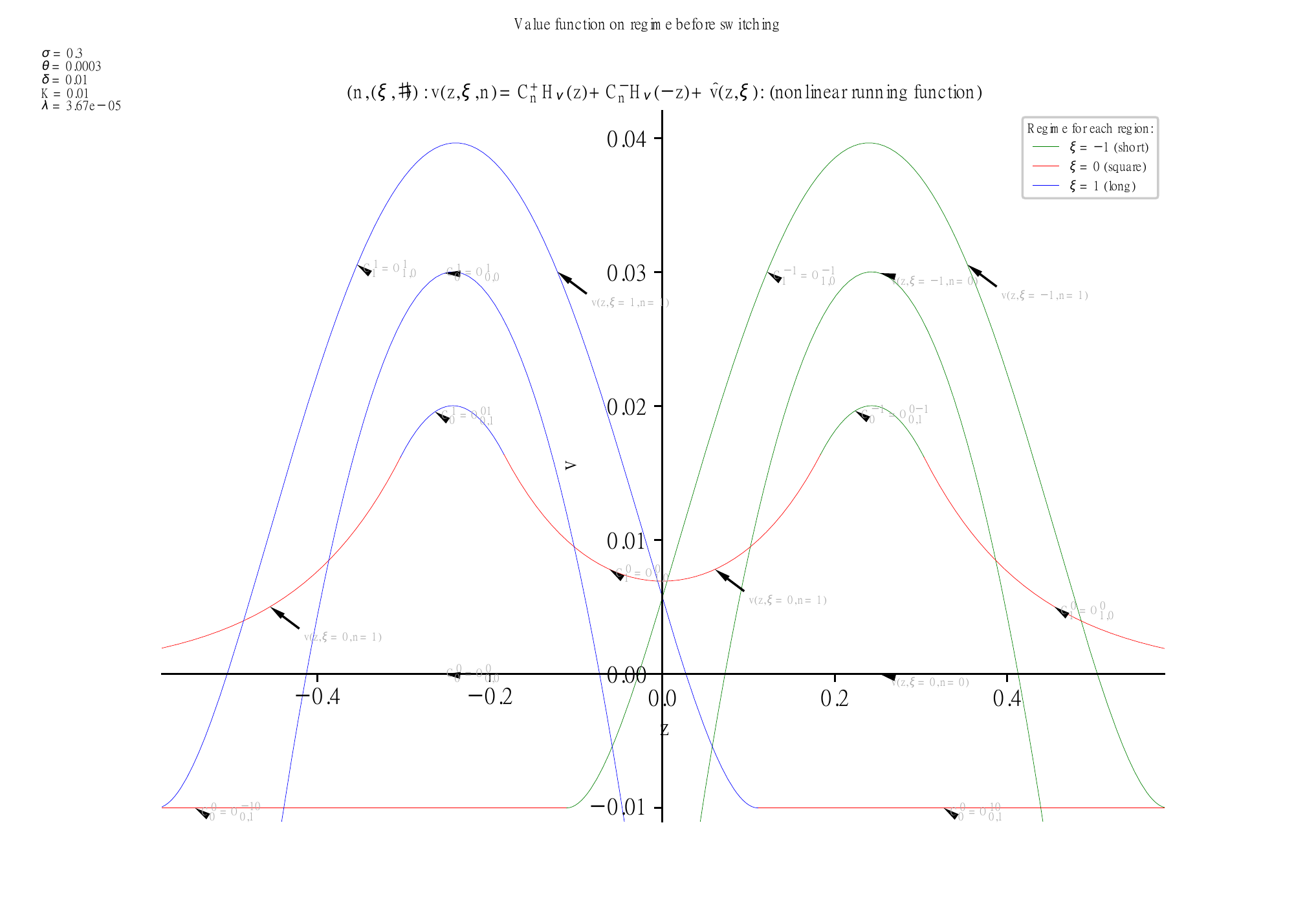}
    \caption{Value functions $v(z, \xi , n), \xi=\{-1, 0, 1\}, n=0, 1$}
    \label{chart:nonlinear_basic_vf}
  \end{center}
  \begin{flushleft}
    \small
  \end{flushleft}
\end{figure}

By repeating this procedure, the value function can be computed for any 
$n\geq 0$.
As an example, the results up to $n =5$ are illustrated in Figure \ref{chart:nonlinear_vf}.

\begin{figure}[htbp]
  \begin{center}
    \includegraphics[scale=0.7]{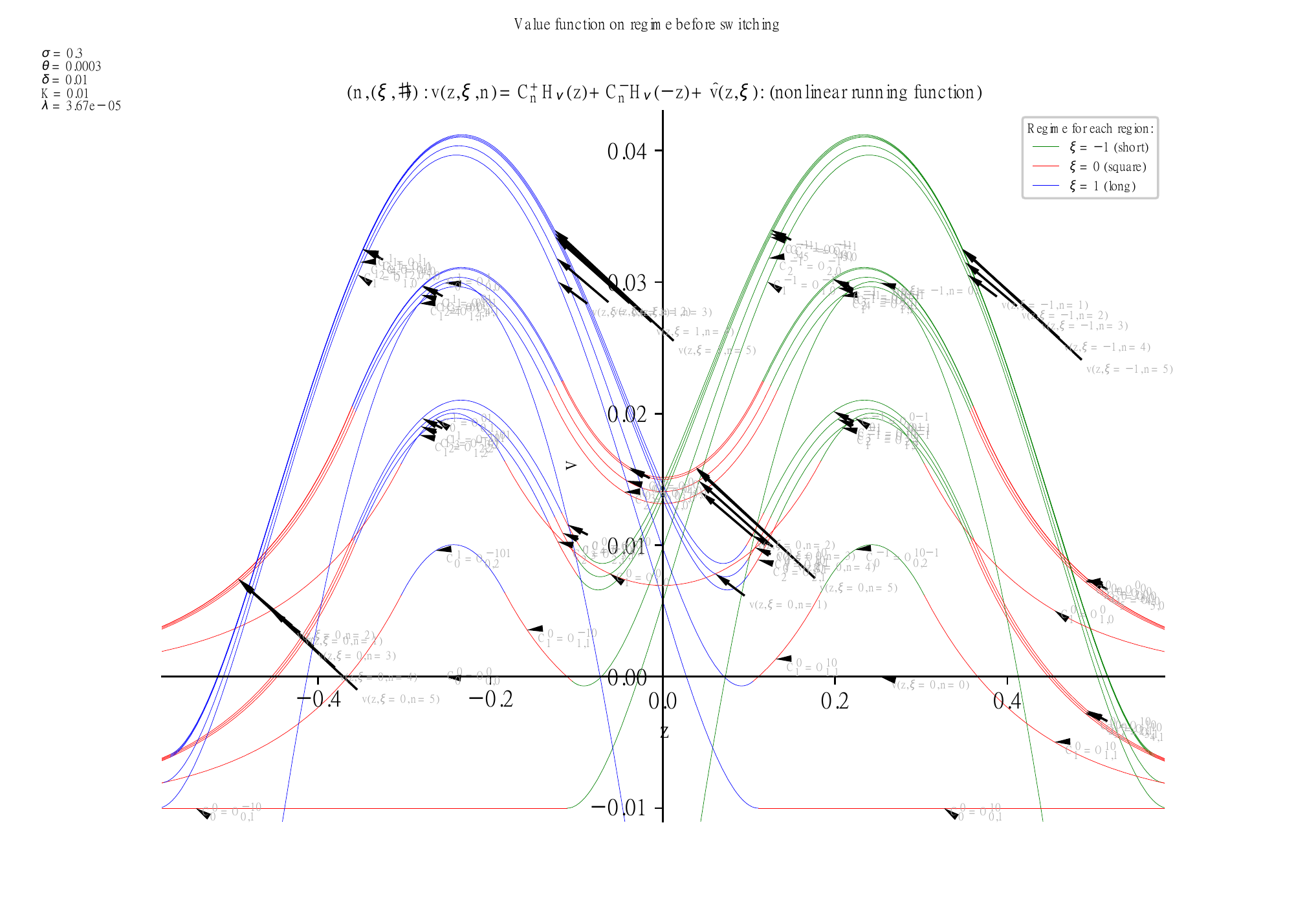}  
    \caption{Value function $v(z, \xi , n), \xi=\{-1, 0, 1\}, n=0, 1, \cdots , 5$}
    \label{chart:nonlinear_vf}
  \end{center}
\end{figure}

All the calculations to solve the viscosity solutions to the variational inequalities are done by Python programs with many special package libraries of Python.
We have not used any numerical solvers, but only plot analytical functions analytically solved by PDEs with special function packages of Python.
Below, Table~\ref{tab:python_package} lists the standard and external Python libraries used in the analysis.
\begin{table}[htbp]
  \begin{center}
    \begin{tabular}[c]{l l l}
      \hline
      Package Name & Purpose & Usage in Analysis \\
      \hline\hline
      \texttt{math}       & Mathematical functions     & Basic mathematical computations \\
      \texttt{numpy}      & Multi-dimensional arrays   & Array and matrix operations \\
      \texttt{pandas}     & Data analysis              & Data manipulation \\
      \texttt{matplotlib} & Visualization              & Chart and graph plotting \\
      \texttt{scipy}      & Special functions          & Calling special functions\\
      \texttt{sqlite3}    & Database                   & Managing a database of optimal solutions \\
      \texttt{logging}    & Debugging                  & Managing output of debug information \\
      \texttt{portion}    & Interval arithmetic        & Set operations over continuation regions \\
      \hline
    \end{tabular}
    \caption{Main Python standard and external packages used in the analysis}
    \label{tab:python_package}
  \end{center}
\end{table}

\begin{omitlevel1}
\section{値関数計算アルゴリズムとデータ構造} \label{sec:VF_algorithm_datastr}

本節で値関数の計算プログラムを計算機上に実装する際の計算アルゴリズムとデータ構造に関して述べる。
本稿のモデルは連続同時スイッチの枠組みを採用している。すなわちlong ($\xi=1$)からsquareを通り越してshort ($\xi=-1$)へのスイッチ(レジーム遷移)をモデル化する際、そのような直接のレジーム遷移を禁ずる代わりに$\xi=1$からsquare ($\xi=0$)へのスイッチと$\xi=0$から$\xi=-1$へのスイッチを同時に(経過時間0で)連続して行うことを許容する。

\subsection{スイッチの入れ子構造} \label{sec:スイッチの入れ子構造}
スイッチ権利数$n$を持つ最適スイッチ問題の構造を考慮すると、式(\ref{eq:generalsolutionN})にみられるように、最適スイッチ直後は再帰的にスイッチ権利数$n-1$を持つ同様の最適スイッチ問題に帰着する。すなわち値関数$\valuefuncB{v}{z}{\xi}{n}$を求める際、継続領域$\continuationRegionA{n}{\xi\{\beta_i\}}$からはみ出し、レジーム$\hat{\xi}\in\controlspace(\xi)$にスイッチされた先の領域では、スイッチ権利数$n$を$n-1$に置き換えた値関数$\valuefuncB{v}{z}{\hat{\xi}}{n-1}$を用いればよい。 $n=0$による初期値は式(\ref{eq:v0})で与えられる。
つまりスイッチに伴い、値関数は式(\ref{eq:generalsolutionN})の2行目のような、スイッチ権利数$n$を序数とする漸化式を形成している。
権利数$n$の値関数を計算する際、継続領域外では、既知の値関数$\valuefuncB{v}{z}{\hat{\xi}}{n-1}$を式(\ref{eq:generalsolutionN})の2行目式にあてはめれば計算ができる。

式(\ref{eq:R_decompose_to_C}), (\ref{eq:decomp_continuation_region})を組み合わせると、計算処理という観点からすると、値関数$\valuefuncB{v}{z}{\xi}{n}$の計算は、実質的に各継続領域$\continuationRegionA{n}{\xi\{\beta_i\}}$上に限って行えばよいことが分かる。
スイッチ権利数$n$の計算をする際には、スイッチ権利数$n-1$の値は既知である。計算処理アルゴリズムは、フィボナッチ数を計算するのと同じような再帰的な手順で行えばよい。
ただしレジーム$\xi\in\positionspace$が複数あり、スイッチのたびにそれが入れ替わるような、系列の交差があるような漸化式である。
特定の$\valuefuncB{v}{z}{\xi}{n}$だけ求めるのであれば、再帰的に求めるやり方もあるが、全$n$を網羅的に求めるのであれば、$n=0, 1, \cdots$という順序で計算するやり方の方が無駄がない。
この場合、$n$の計算をする際、$n-1$以前の計算結果は蓄積されている必要があるため、何らかのデータベースの仕組みも必要である。

\subsection{継続領域間の引継ぎ構造} 
これまでの考察により、スイッチ権利数$n$の値関数を求めるには、権利数$n$の各継続領域$\continuationRegionA{n}{\xi\{\beta_i\}}$中心に決定していけばよいことがわかった。
継続領域境界でのスイッチ発生以降は既知の関数$\valuefuncB{v}{z}{\hat{\xi}}{n-1}$に引き継げばよいからである。
スイッチ先が権利数$n-1$の継続領域$\continuationRegionA{n-1}{\hat{\xi}}$内部のとき関数$\valuefuncB{v}{z}{\hat{\xi}}{n-1}$の計算が未だであれば再帰的に同様の手順を繰り返せばよい。
スイッチ先が権利数$n-1$のスイッチ領域の場合は更に同時連続スイッチが発生する。
値関数$\valuefuncB{v}{z}{\xi}{n} \to \valuefuncB{v}{z}{\hat{\xi}}{n-1}$間の引継ぎの様子を図示したのが図\ref{chart:nonlinear_basic}である。
\begin{figure}[htbp]
\begin{center}
  \includegraphics[scale=0.7]{Figure_basic_switching.eps}   
  \caption{値関数$v(z, \xi , n), \xi=\{-1, 0, 1\}$の$n=1\to 0$へのスイッチ前後の関係}
 \label{chart:nonlinear_basic}
\end{center}
\end{figure}
 
この例は本稿の非線形評価関数問題の実際の解を数値計算で求めた結果でもある。
このチャートでは、継続領域上のそれぞれの値関数$v(z, \xi , 1), \xi=\{-1, 0, 1\}$がレジーム$\hat{\xi}\in\controlspace(\xi)$へとスイッチすることによる値関数$v(z, \hat{\xi} , 0)=\hat{v}(z, \hat{\xi}), \hat{\xi}=\{-1, 0, 1\}$への引継ぎの様子を示している。
すなわち、この解析幾何学的観点に基づく関係は、解析式としては、式(\ref{eq:generalsolutionN})のスイッチ側の式に基づいて導かれた、スイッチ前後の関数の接続条件となる$C^1$-級条件(smooth pasting condition)を表す、式(\ref{eq:smooth_pasting_condition_j}), (\ref{eq:smooth_pasting_condition_k})に対応している。	

この図は、3種類のレジーム毎に色分けされている。
$\xi=0, n=0$の値関数は図の横軸に一致して重なっていて見えない。
垂直矢印は、その点でスイッチが発生することを示す。各上向きベクトルの長さは取引コスト分である。
値関数の定義によると、その時点から見て将来発生し得る期待コストは含まれているが、実績過去分はイベントが去った瞬間に剥落するように定義されており、取引コストはスイッチ直前まではその期待値として値関数に組み込まれているがコストを払った瞬間に過去イベントとなり、評価関数から除去されるため取引コスト分だけ値関数はスイッチの瞬時に上昇する。

例えば中央の赤の曲線は$\xi=0, n=1$の継続領域$\continuationRegionA{1}{0}$の値関数である。領域端点にて、右側であれば$\hat{\xi}=-1$(緑)、左端点あれば$\hat{\xi}=1$(青)の$n=0$の継続領域$\continuationRegionA{0}{\hat{\xi}}=\real{}$にスイッチされる。なお$v(z, \hat{\xi} , 0)=\hat{v}(z, \hat{\xi}),\; \hat{\xi}\in\positionspace$は特別に実数全域$\real{}$で継続領域となっている。
スイッチ発生のための条件はTheorem \ref{theo:systemequation_viscosity}で示した手順書通りであるが、
\begin{enumerate}
  \item 式(\ref{eq:smooth_pasting_condition})の境界条件により、$v(z, \xi , 1)$を$K$だけ上方シフトさせた曲線と、対応する$v(z, \hat{\xi} , 0)$の曲線はスイッチする点で一致しかつ接する($\continuousFunction{1}$-級条件)。
  \item 式(\ref{cond:geq_u})により、$v(z, \xi , 1)$を$K$だけ上方シフトさせた曲線は、他のいかなるスイッチ遷移可能なレジームによる下位の値関数$v(z, \hat{\xi} , 1-1), \forall \hat{\xi}\in\controlspace(\xi) $も下回ることがない。
\end{enumerate}
となっていることが確認できる。
$\xi=0, n=1$の継続領域$\continuationRegionA{n}{\xi}$は、3個の互いに素な連結継続領域、$\continuationRegionA{n}{\xi\{\beta_1\}}, \continuationRegionA{n}{\xi\{\beta_2\}}, \continuationRegionA{n}{\xi\{\beta_3\}}$に分かれているが、それぞれについて条件が遵守されている。
同様のことが他の全てのレジーム$\xi=\pm 1$でも確認できる。
こうして値関数$v(z, \xi , 0), \xi\in\positionspace$から$v(z, \xi , 1), \xi\in\positionspace$を構成する手順が確認できた。
$v(z, \xi , 0)=\hat{v}(z, \xi)$は既知であるため、再帰的手順に従い任意の$n\geq 0$の値関数を求める手順も確認できた。

こうして、式(\ref{eq:smooth_pasting_condition})のスイッチ前後の値関数の関係式により、継続領域上の各値関数グラフにスイッチ領域上の値を付け加えた完全な値関数のグラフを図\ref{chart:nonlinear_basic_vf}に示す。
\begin{figure}[htbp]
  \begin{center}
    \includegraphics[scale=0.7]{Figure_basic_ValueFunc.eps}
    \caption{値関数$v(z, \xi , n), \xi=\{-1, 0, 1\}, n=0, 1$}
    \label{chart:nonlinear_basic_vf}
  \end{center}
  \begin{flushleft}
    \small
値関数$v(z, 0 , 0)\equiv 0$の赤線は水平軸に重なっている。
値関数$v(z, \pm 1 , 0)=\hat{v}(z, \pm 1)$は左右の放物線。
中段にある左右対称な曲線が値関数$v(z, 0 , 1)$。
放物線の上に覆い被さっているのが$v(z, \pm 1 , 1)$。
  \end{flushleft}
\end{figure}

各曲線とも、ベースとなる最終継続領域の属するレジームの色で色分けしている。
すなわち、式(\ref{eq:R_decompose_to_C})のように各$\xi^{(0)}\in\positionspace$につき、最終連続同時スイッチ先レジーム$\xi^{(m)}$による$\stoppingRegionA{n}{\xi^{(0)}\cdots{\xi}^{(m)}} \cap \continuationRegionA{n-m}{{\xi}^{(m)}}$で分割し、$\continuationRegionA{n-m}{{\xi}^{(m)}}$が最終継続領域となり、曲線の形状はこの継続領域上の古典解によって形成される。
\ref{sec:スイッチの入れ子構造}節の議論によりスイッチ領域内でも、実質的には最終スイッチ先の継続領域上での描画手順を用いてグラフを描画している。
これらの色の切れ目が領域相互の境界であり、つまりそれが最適戦略におけるスイッチ発生箇所に相当する。

この手順を繰り返すことにより任意の$n$について値関数の計算を行うことができる。試しに$n=5$までを描画したのが図\ref{chart:nonlinear_vf}である。
\begin{figure}[htbp]
  \begin{center}
    \includegraphics[scale=0.7]{Figure_ValueFunc.eps}
    \caption{値関数$v(z, \xi , n), \xi=\{-1, 0, 1\}, n=0, 1, \cdots , 5$}
    \label{chart:nonlinear_vf}
  \end{center}
\end{figure}
もはや情報量が多すぎて図が煩雑であるが、チャート自体はEPSファイル形式で作成しており、ベクトル画像のため文書は任意倍率に拡大可能である。この文書が電子ファイルであれば細部に渡り内容が検証可能であろう。

\subsection{初期値問題と解の管理} 
処理全体の中で厄介なのは、式(\ref{eq:smooth_pasting_condition})による継続領域境界の両端による4元連立の非線型連立方程式の求根処理であろう。通常は方程式の解を求める数値計算ライブラリーを呼ぶことになるであろう。
本稿ではニュートン法の関数を用いた。外部関数を呼ぶのは、内容に信頼がおける反面、トラブルが起こっても内部をデバックする自由度が限定的である。
収束計算を行うことになるが、どの程度の誤差以内であれば真の解と見做すのかの制御はユーザーに任されており、試行錯誤を要する。
さらに\ref{sec:Hermite_install}節で取り扱ったように、本稿は巨大な値をとり得るHermite関数を含む計算をしているため、誤差と見做すべき数値の水準も場合によって変化するため注意を要する。

ニュートン法を用いる上で最も知恵が求められるのは、初期値の入力である。この手法の場合、原則的には解の近傍を初期値として選択できないと収束しない。収束計算であるため、求めるべき真の値の近傍を初期値に入れられれば短時間で高精度の解が発見されるため理想的なのであるが、
当然ながら、不明な解を求めるために関数を呼んでいることを考えると、解の近傍は普通は予め分かるものではない。
ただ、再帰的な計算を繰り返す場合、$n$の値関数を計算する際には$n-1$の解を初期値に用いればうまく行くことが多い。
しかし全く新しい問題を初めて解くような場合で解が存在するかどうかが不明な場合には特に試行錯誤を繰り返すことになるであろう。
未知数が実数の場合、式(\ref{exp:float_range})で表されるような実数全域の範囲から解の場所を推測するのはほぼ不可能に近いであろう。
特にエルミート関数のような爆発しやすい関数を取り扱うような場合、解も常識的とならない場合もあり、初期値も見当がつかない恐れがある。

このように苦労して求めた解は蓄積しておくと、後に類似のパラメータを持つ方程式が現れた時、その解を初期値に用いることにより楽に収束計算できる場合がある。少なくとも何もヒントがない場合と比べると根発見の手掛かりとなる。
そのためパラメータのパターンと解の対応表を蓄積するようなデータベースを構築することが望ましい。
そうすることにより、長い目で見れば効率よく数値計算できる。本稿ではPython組み込みのデータベース、sqlite3パッケージ(表\ref{tab:python_package})を用いてこのデータベース機能を実装した。

\subsection{解析的に自明な解の導入による解の存在範囲の推定}  \label{sec:obvious_solution}
前節で、ニュートン法で方程式の解の探索の際に知恵を要するのが初期値の決定であることを説明した。
実数空間上でニュートン法の初期値を決定することは、求めるべき解そのものを求めることと大差なく、事前知識が皆無の場合、巨大な実数空間の中から解の位置を言い当てることは不可能である。
求解のための一つの提案は、解析的に自明な解を導入することである。すなわち、与えられたパラメータのいくつかの値を一時的に修正することにより、解析的に明らかに求まる解を用いて、そこから少しずつ修正したパラメータを元の問題の値に徐々に近づけつつその都度、前ステップで求められた解を次のステップの初期値に用いることにより、最終的に目的とする元のパラメータ・セットまでたどり着くという手法である。この手法は、パラメータと解の関係が連続的であるという想定に基づいている。
すなわち、パラメータ・セットが類似している方程式の解どうしは互いに近い位置関係にあると想定している。

例えば本稿の問題の場合、$n=1$の$z=z^{\partial}$の境界条件式(\ref{eq:smooth_pasting_condition})に一般解(\ref{eq:general_u})を当てはめると、
\begin{align} 
     &\CCn{\xi}{+\{\beta_i\}}{1}\HermiteNu{z^{\partial}}+\CCn{\xi}{-\{\beta_i\}}{1}\HermiteNu{-z^{\partial}}+  \vfhatxiz{\xi}{z^{\partial}} =\vfhatxiz{\hat{\xi}}{z^{\partial}} - K , \label{eq:bc_1st}\\
     &\CCn{\xi}{+\{\beta_i\}}{1}\HermiteNui{z^{\partial}}-\CCn{\xi}{-\{\beta_i\}}{1}\HermiteNui{-z^{\partial}}+ \valuefuncA{\widehat{v}'}{z^{\partial}}{\xi}/(2\nu) =\valuefuncA{\widehat{v}'}{z^{\partial}}{\hat{\xi}}/(2\nu). \label{eq:bc_2nd}
\end{align}
ここで、レジーム$\xi=0, \hat{\xi}=\pm 1$の場合を考えると、$\vfhat$の定義より、$\valuefuncA{\widehat{v}}{z^{\partial}}{\xi}=\valuefuncA{\widehat{v}'}{z^{\partial}}{\xi}=0$.

ここで、式(\ref{eq:bc_1st}), (\ref{eq:bc_2nd})を$\CCn{\xi}{\pm\{\beta_i\}}{1}$を2独立変数とする二元連立一次方程式と考え、その非斉次項が$0$となる次の場合を考えると自明な解のケースを作りだせる。すなわち、
\begin{align}
  \begin{cases}
    \vfhatxiz{\hat{\xi}}{z^{\partial}} &= K, \\
    \valuefuncA{\widehat{v}'}{z^{\partial}}{\hat{\xi}}&=0,
  \end{cases} \label{eq:obvious_case}
\end{align}
となるパラメータが存在すれば、線形独立な連立方程式の根の関係から、
$\CCn{\xi}{\pm\{\beta_i\}}{1}=0$という解析的に自明な解が得られ、数値計算の支援となる。以下のProposition に式(\ref{eq:obvious_case})を満たす条件を示す。

\begin{proposition}[自明な解($n=1, \xi=0, \hat{\xi}=\pm1$)] \label{prop:obvious_solution}~\\
  $n=1, \xi=0, \hat{\xi}=\pm1$, および、
  \begin{align}
    \lambda=\lambda^* \equiv \dfrac{2\theta+\delta}{2\sigma^2(\theta+\delta)}\{ \sqrt{\delta\{K^2\delta(\theta+\delta)^2+\theta^2\sigma^2\}} - K\delta(\theta+\delta)\} > 0 \nonumber
  \end{align}
  のとき、解析的に自明な次の解が得られる。
  \begin{align}
    \begin{cases}
       \text{微分方程式の未定係数}:&\CCn{\xi}{\pm\{\beta_i\}}{1}=0, \\
       \text{自由境界の位置}:& z^{\partial\hat{\xi}}= -\hat{\xi} \dfrac{\sqrt{\theta}\theta(2\theta+\delta)}{2\lambda^*\sigma(\theta+\delta)}\;(\hat{\xi}=\pm 1)  
     \end{cases}\nonumber 
  \end{align}
\end{proposition}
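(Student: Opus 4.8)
The plan is to recognize the two scalar equations in (\ref{eq:obvious_case}) as precisely the requirement that \emph{both} right-hand sides of the linear system (\ref{eq:bc_1st})--(\ref{eq:bc_2nd}) vanish, and then to solve those two conditions explicitly for the free-boundary location and for $\lambda$. Recall from (\ref{eq:v0}) that for $\hat{\xi}=\pm1$ (so $\abs{\hat{\xi}}=1$) one has $\vfhatxi{\hat{\xi}}=-(k_2z^2+k_1\hat{\xi}z+k_0)$ and hence $\valuefuncA{\widehat{v}'}{z}{\hat{\xi}}=-(2k_2z+k_1\hat{\xi})$; the first step is purely to exploit this explicit quadratic form of $\widehat{v}$.

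First I would impose the derivative condition $\valuefuncA{\widehat{v}'}{z^{\partial}}{\hat{\xi}}=0$, which gives immediately
\begin{align}
  z^{\partial\hat{\xi}}=-\dfrac{k_1\hat{\xi}}{2k_2}. \nonumber
\end{align}
Inserting $k_1=\sqrt{\theta}\sigma/(\delta+\theta)$ and $k_2=\lambda\sigma^2/(\theta(2\theta+\delta))$ from (\ref{eq:v0}) and simplifying recovers exactly the claimed boundary $z^{\partial\hat{\xi}}=-\hat{\xi}\,\sqrt{\theta}\theta(2\theta+\delta)/(2\lambda\sigma(\theta+\delta))$. Next I would substitute this $z^{\partial}$ into the value condition $\vfhatxiz{\hat{\xi}}{z^{\partial}}=K$; using $\hat{\xi}^2=1$ the cross term and square term combine so that the whole relation collapses to the single scalar identity
\begin{align}
  \dfrac{k_1^2}{4k_2}-k_0=K, \nonumber
\end{align}
which, after inserting the explicit $k_0,k_1,k_2$, is a quadratic in $\lambda$ of the form $B\lambda^2+K\lambda-A=0$ with $A=\theta^2(2\theta+\delta)/(4(\theta+\delta)^2)>0$ and $B=\sigma^2/(\delta(2\theta+\delta))>0$.

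The main work is then the algebra of the third step: solving this quadratic and massaging its positive root into the stated closed form. Since $A,B>0$, the discriminant $K^2+4AB$ is positive and there is a unique positive root $\lambda^*=(-K+\sqrt{K^2+4AB})/(2B)$; computing $4AB=\theta^2\sigma^2/(\delta(\theta+\delta)^2)$ and then rationalizing by pulling a factor $\sqrt{\delta}$ inside the radical converts this into $\lambda^*=\dfrac{2\theta+\delta}{2\sigma^2(\theta+\delta)}\{\sqrt{\delta\{K^2\delta(\theta+\delta)^2+\theta^2\sigma^2\}}-K\delta(\theta+\delta)\}$, which is the assertion. Positivity $\lambda^*>0$ then follows because the radicand exceeds $(K\delta(\theta+\delta))^2$ by exactly $\delta\theta^2\sigma^2>0$. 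I expect this regrouping of the square-root term to be the only delicate point, though it is entirely mechanical.

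Finally I would close the argument by returning to (\ref{eq:bc_1st})--(\ref{eq:bc_2nd}) with $\xi=0$ (whence $\vfhatxiz{0}{z^{\partial}}=\valuefuncA{\widehat{v}'}{z^{\partial}}{0}=0$) at the chosen pair $(\lambda^*,z^{\partial})$: both right-hand sides now vanish, leaving a homogeneous $2\times2$ linear system for $(\CCn{0}{+\{\beta_i\}}{1},\CCn{0}{-\{\beta_i\}}{1})$ whose coefficient determinant equals $\Wronskian{\HermiteNu{z},\HermiteNu{-z}}/(2\nu)$ evaluated at $z^{\partial}$. Because $\nu=-\delta/\theta<0$ is not a non-negative integer, that Wronskian $=2^{\nu+1}\sqrt{\pi}\,e^{z^2}/\Gamma(-\nu)$ is nonzero, so the system is nonsingular and forces $\CCn{0}{\pm\{\beta_i\}}{1}=0$, which is the trivial solution claimed and completes the proof.
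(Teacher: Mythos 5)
Your proposal is correct and follows essentially the same route as the paper: the paper identifies $z^{\partial\hat{\xi}}$ as the axis of the parabola $\vfhatxi{\hat{\xi}}$ and pins down $\lambda^*$ by forcing the vertex height $b=k_1^2/(4k_2)-k_0$ to equal $K$, which is exactly your derivative condition plus value condition, and it likewise concludes $\CCn{\xi}{\pm\{\beta_i\}}{1}=0$ from the resulting homogeneous linear system. The only difference is that you spell out details the paper leaves implicit — the explicit quadratic $B\lambda^2+K\lambda-A=0$ and its positive root, and the Wronskian nonsingularity argument (which the paper invokes in the text preceding the proposition rather than inside the proof) — so your write-up is a slightly more detailed rendering of the same argument.
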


\begin{proof}~\\
関数$\vfhatxiz{\hat{\xi}}{z}, \hat{\xi}=\pm 1$の解析幾何学的形状は上に凸の放物線となり、式(\ref{eq:obvious_case})のとき、放物線の頂点は$(z^{\partial}, K)$、軸は$z=z^{\partial}$である。

放物線$\vfhatxiz{\hat{\xi}}{z}, \hat{\xi}=\pm 1$の軸は、
\begin{align}
  z^{\partial\hat{\xi}}=-\hat{\xi} k_1/(2k_2)= -\hat{\xi} \dfrac{\sqrt{\theta}\theta(2\theta+\delta)}{2\lambda\sigma(\theta+\delta)} \label{eq:axis}
\end{align}
であり、頂点の$y$座標
\begin{align}
  b\equiv k_1^2/(4k_2)-k_0= \dfrac{{\theta^2}(2\theta+\delta)}{4\lambda(\theta+\delta)^2} - \dfrac{\lambda\sigma^2}{\delta(2\theta+\delta)} \nonumber
\end{align}
を$b=K$となるように、$\lambda=\lambda^*$を決めると、
\begin{align}
  \lambda^*= \dfrac{2\theta+\delta}{2\sigma^2(\theta+\delta)}\{ \sqrt{\delta\{K^2\delta(\theta+\delta)^2+\theta^2\sigma^2\}} - K\delta(\theta+\delta)\} > 0. \nonumber
\end{align}
このときの軸(式(\ref{eq:axis}))の位置は、
\begin{align}
  z^{\partial\hat{\xi}}=-\hat{\xi} \dfrac{\sqrt{\theta}\theta(2\theta+\delta)}{2\lambda^*\sigma(\theta+\delta)}\label{eq:axis*}
\end{align}
である。

つまり$\lambda=\lambda^*$とするとき、解析的に自明な解は、微分方程式の未定係数を$\CCn{\xi}{\pm\{\beta_i\}}{1}=0$として、また自由境界の位置$z^{\partial\hat{\xi}}\;(\hat{\xi}=\pm 1)$は式(\ref{eq:axis*})として得られる。

\end{proof}

式(\ref{eq:v0})より式(\ref{def:determinant})の判別式$D$は、
\begin{align}
  D=k_1^2-4k_2(K+k_0)=4 k_2 (b-K).  \label{def:determinant2}
\end{align}
これより、
\begin{align}
  \lambda\leq\lambda^* \Longleftrightarrow b\geq K \Longleftrightarrow D\geq 0 \Longrightarrow \stoppingRegionA{n}{0}\ne\phi, \forall n\geq 1.  \label{rel:lambda_b_D}
\end{align}
これより、$\lambda\leq\lambda^*$のとき任意の$n\geq 1$でレジーム$\xi=0$上にスイッチ領域の存在が保証される。
さもなくば、$n$によっては$\real{}$全域で継続領域ということもあり得る。
すなわち$\lambda=\lambda^*$は、全ての$n$に対して$\xi=0$から他のレジームにスイッチできる可能性のあるリスク回避係数の上限である。
リスク回避係数$\lambda$が$\lambda^*$を超すと、$\xi=0$から他のレジームへスイッチできる保証は消滅する。
また、式(\ref{rel:lambda_b_D})は式(\ref{eq:obvious_case})のとき等式で成立する。すなわち、
\begin{align}
  [Equation (\ref{eq:obvious_case})] \Longleftrightarrow   \lambda=\lambda^* \Longleftrightarrow b=K \Longleftrightarrow D=0.  \label{eq:lambda_b_D} 
\end{align} 
   
式(\ref{eq:lambda_b_D})の状態にある場合の値関数を図示したものが図\ref{chart:special_case_switching}である。
\begin{figure}[htbp]
  \begin{center}
    \includegraphics[scale=0.7]{Figure_special_switching.eps}
    \caption{自明な解によるスイッチの様子：$v(z, 0 , 1) \to \hat{v}(z, \hat{\xi}), \hat{\xi}=\pm 1$}
    \label{chart:special_case_switching}
  \end{center}
  \begin{flushleft}
    \small
一般のケースの図\ref{chart:nonlinear_basic_vf}を参照すると、$\continuationRegionA{1}{0}$は通常、3個の連結部分継続領域に分かれている。それらを左から順に$\continuationRegionA{1}{0\{\beta\}}, \beta=\beta_1, \beta_2, \beta_3$とおく。図\ref{chart:special_case_switching}では3個の領域は隙間なく配置されており、それらの境界は左右の放物線の軸上にある。隙間はないが穴が開いており、過程$Z$がそこに到達すると他のレジームへのスイッチ遷移が発生する。
図では継続領域$\continuationRegionA{0}{0}$を構成する、各連結継続領域$\continuationRegionA{1}{0\{\beta\}},\; \beta=\beta_1, \beta_2, \beta_3$の赤線は水平軸に重なっている。
  \end{flushleft}
\end{figure}
図\ref{chart:special_case_switching}の3個の連結部分継続領域$\continuationRegionA{1}{0\{\beta\}}, \beta=\beta_1, \beta_2, \beta_3$のいずれの有限端点においてもsmooth pastingの境界条件式(\ref{formula:v_in_C1}), (\ref{eq:smooth_pasting_condition}), (\ref{eq:obvious_case})が満たされていることが確認される。

図\ref{chart:special_case_switching}は図\ref{chart:nonlinear_basic}の特別な場合であり、より一般的な状態を表す図\ref{chart:nonlinear_basic}では$\continuationRegionA{1}{0}$を構成する個々の連結継続領域間に適度な隙間が空いているが、一方、特別な場合である、図\ref{chart:special_case_switching}では隙間の間隔は0である。すなわち実数$\real{}$上ほぼ全域が継続領域となる。ただし隙間間隔の面積が0であっても状態がそこへ到達すればスイッチが発生するという意味では隙間の両岸の領域は分断されたままである。

こうして$\lambda=\lambda^*$のときの自明の解は求められた。
数値計算上はこれを手掛かりとして、次に$\lambda^*$の近傍で、実際に与えられた$\lambda$の方向に$\lambda$の値を近づけながら対応する解を求める。その解を次のステップの問題の初期値として用いる。それを繰り返し、最終的に与えられた$\lambda$の解までたどり着くという手順である。
中間時点で得られたパラメータ・セットと対応する最適解の対応表というデータの管理をデータベースで行うことにより効率よく解を発見できる。こうして求解可能なパラメータ・セットのパターンを増やす度にデータベースにそれらの計算結果を蓄積することにより未知のパラメータのパターンに対しても解を早く探し出せるような環境を構築する。
\end{omitlevel1}

\section{Conclusion}\label{sec:conclusion}
In this paper, we have presented a novel approach to solving a variational inequality arising in an infinite-horizon optimal switching problem with simultaneous multiple switchings. Building upon the uniqueness result of the viscosity solution shown in \citeN{suzuki2020optimal}, we have demonstrated that, under appropriate assumptions, the viscosity solution can be explicitly constructed as a series of piecewise classical solutions. The key contribution lies in the establishment of necessary and sufficient smooth pasting conditions, which ensure that the concatenated classical solutions indeed satisfy the viscosity solution framework over the entire domain.

We proposed a practical algorithm for identifying all free boundary subproblems associated with the continuation and switching regions, and showed that this enables us to compute the complete value function recursively for any given number of remaining switches. The algorithm was applied to a pair-trading model based on an Ornstein–Uhlenbeck process. By solving each component analytically using special functions, we obtained a complete and explicit representation of the value functions without relying on numerical solvers.

The results not only provide a theoretical foundation for understanding the structure of the viscosity solution in such optimal switching problems but also offer a computationally feasible method for implementation in practice. The algorithmic framework and explicit formulas derived in this study can potentially be extended to more complex regime-switching models or multi-dimensional cases.

Future research may consider relaxing some of the assumptions, incorporating stochastic volatility, or applying the method to problems in energy economics, real options, and inventory control, where regime-switching behavior and transaction costs are inherent features.
 
  \section*{Acknowledgments}
  This research did not receive any specific grant from funding agencies in the public, commercial, or not-for-profit sectors.
  Also, the author is grateful to anonymous referees for their suggestions which have greatly improved the presentation of the paper.

\bibliographystyle{model5-names}  
\bibliography{optimize_general,switching,mean_reversion}

\end{document}